\newcommand{\beq}{\begin{small} \begin{equation}}
\newcommand{\eeq}{\end{equation} \end{small}}
\newcommand{\beqn}{\begin{small} \begin{equation*}}
\newcommand{\eeqn}{\end{equation*} \end{small}}
\newtheorem{theorem}{Theorem}[section]
\newtheorem{proposition}[theorem]{Proposition}
\newtheorem{corollary}[theorem]{Corollary}
\newtheorem{lemma}[theorem]{Lemma}
\newtheorem{remark}[theorem]{Remark}
\newtheorem{definition}[theorem]{Definition}
\numberwithin{theorem}{section} \numberwithin{equation}{section}
\newcommand{\hpg}[5]{{}_{#1}F_{#2}\! \left(\left.{\genfrac{}{}{0pt}{}{#3}{#4}}\right| #5 \right) }
\begin{document}
\title[Kummer sandwiches and Greene-Plesser construction]{Kummer sandwiches and Greene-Plesser construction}
\author{Noah Braeger}
\author{Andreas Malmendier}
\author{Yih Sung}
\email{noah.braeger@usu.edu, andreas.malmendier@usu.edu, yih.sung@usu.edu}
\address{Department of Mathematics \& Statistics, Utah State University, Logan, UT 84322}
\begin{abstract}
In the context of K3 mirror symmetry, the Greene-Plesser orbifolding method constructs a family of K3 surfaces, the mirror of quartic hypersurfaces in $\mathbb{P}^3$, starting from a special one-parameter family of K3 varieties known as the quartic Dwork pencil. We show that certain K3 double covers obtained from the three-parameter family of quartic Kummer surfaces associated with a principally polarized abelian surface generalize the relation of the Dwork pencil and the quartic mirror family. Moreover, for the three-parameter family we compute a formula for the rational point-count of its generic member and derive its transformation behavior with respect to $(2,2)$-isogenies of the underlying abelian surface.
\end{abstract}
\keywords{K3 surfaces, Kummer sandwich theorem, Greene-Plesser orbifolding method}
\subjclass[2010]{14J28, 14J33, 33C65}
\maketitle
\section{Introduction}
In the theory of algebraic curves, Manin's celebrated unity theorem \cite{MR1946768}*{Sec.~2.12}, provides a connection between certain period integrals for families of algebraic curves and the number of rational points over finite fields $\mathbb{F}_p$ on them. The correspondence is established using the Gauss-Manin connection and the holomorphic solution for the resulting Picard-Fuchs equation. In \cite{MR3613974} the third author explores the connection between the rational point-count and periods of a one parameter family of curves associated with with triangle groups; in \cite{MR3992148} the second and the third author demonstrated how this principle is generalized to include two-parameter families of Kummer surfaces. The goal of this article is to extend our technique further: we will focus on certain K3 double covers obtained from the three-parameter family of quartic Kummer surfaces associated with a generic principally polarized abelian surface. We compute the rational point-count for the generic member of this family and determine its transformation with respect to $(2,2)$-isogenies of the underlying abelian surface. 
\par The underlying geometric techniques in this article are motivated by mirror symmetry: in their seminal work of Candelas et.~al \cite{MR2019149} arithmetic properties of the periods of the famous Dwork pencil of quintic threefolds were derived, and it was shown that for an understanding of a quantum version of the congruence zeta function arithmetic properties of the periods are crucial. One feature of the Greene-Plesser orbifolding construction is that the so-called mirror map can be computed explicitly; see \cites{MR1101784, MR1416334}. 
\par In this article, we will focus on K3 mirror symmetry. Here, the Greene-Plesser orbifolding method constructs a family of K3 surfaces, the mirror of general quartics in $\mathbb{P}^3$, starting from the Dwork pencil. As we will demonstrate,  the famous period computation of Narumiya and Shiga~\cite{MR1877764} can then be traced back to the existence of suitable Kummer sandwich theorems for the quartic mirror family. After we establish the connection between the Greene-Plesser orbifolding method and these Kummer sandwich theorems, we will generalize these theorems to the three-parameter family of quartic Kummer surfaces, and then compute the rational point-counting function of a generic member of the three-parameter family generalizing the quartic mirror.
\par Concretely, the Dwork pencil is the one-parameter family of deformed Fermat hypersurfaces in the projective space $\mathbb{P}^{n}=\mathbb{P}(X_0,\dots,X_n)$ given by
\beq
\label{Fermat}
 X_0^{n+1} + X_1^{n+1} + \dots + X_{n}^{n+1} + (n+1) \, \lambda \, X_0 X_1 \cdots X_{n} = 0 \,.
\eeq
It is known that for each integer $n\in \mathbb{N}$ the smooth resolution of Equation~\eqref{Fermat} constitutes a family of $(n-1)$-dimensional Calabi-Yau hypersurfaces $\mathcal{X}_{\lambda}$. For $n=4$ Equation~\eqref{Fermat} is the famous quintic family of Candelas et al.~\cite{MR1101784}. For the family~\eqref{Fermat} a discrete group of symmetries is identified as follows: it is generated by the action $(X_0,X_j) \mapsto (\zeta_{n+1}^n X_0, \zeta_{n+1} X_j)$ for $1 \le j \le n$ with $\zeta_{n+1}=\exp{(\frac{2\pi i}{n+1})}$.  Since the product of all generators multiplies the homogeneous coordinates by a common phase,  the symmetry group is $G_{n-1}=(\mathbb{Z}/(n+1)\, \mathbb{Z})^{n-1}$. The new parameter and affine variables
\begin{small}\begin{gather*}
 \mu=\frac{(-1)^{n+1}}{\lambda^{n+1}} \,, \;  x_1 = \frac{X_1^n}{(n+1)\, X_0 \cdot X_2 \cdots X_n \, \lambda}  \,, \;
 x_2 =\frac{X_2^n}{(n+1)\, X_0 \cdot X_1 \cdot X_3 \cdots X_n \, \lambda} ,  \; \dots \; ,
 \end{gather*}
 \end{small}%
are invariant under the action of $G_{n-1}$. Hence, they descend to coordinates on the orbifold quotient $\mathcal{X}_{\lambda}/G_{n-1}$. A birational model for $\mathcal{X}_{\lambda}/G_{n-1}$ is then given in these new affine variables $x_1, \dots, x_n$ using the remaining relation between them, namely
\beq
\label{mirror_family}
 f_n(x_1,\dots, x_n, t) = x_1 \cdots x_n \, \Big( x_1 + \dots + x_n + 1 \Big) +  \frac{(-1)^{n+1} \, \mu}{(n+1)^{n+1}} =0 \,.
\eeq
\par It was proved in ~\cite{MR1416334} that a family of Calabi-Yau hypersurfaces $\mathcal{Y}_{\mu}$ of degree $(n+1)$ in $\mathbb{P}^n$ can be obtained from Equation~\eqref{mirror_family} after the resolution of its singularities. This is known as the Greene-Plesser orbifolding construction; see \cites{MR1059831, MR1113571}. In fact, we have the following general proposition \cite{MR1677117}*{Prop.~4.2.3}:
\begin{proposition}
Let $\mathcal{X}$ be a Calabi-Yau variety and $G$ be a discrete group of symmetries on $\mathcal{X}$. Then the smooth resolution of the orbifold $\mathcal{X}/G$ as well as its deformations are again Calabi-Yau varieties.
\end{proposition}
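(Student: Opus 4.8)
The plan is to reduce the statement to the fact that a Calabi-Yau variety is characterized by triviality of its canonical bundle, equivalently by the existence of a nowhere-vanishing holomorphic volume form, and then to track this form through the quotient and its resolution. First I would recall that a $d$-dimensional Calabi-Yau variety $\mathcal{X}$ carries a nowhere-vanishing holomorphic $d$-form $\omega$ trivializing $K_{\mathcal{X}}$, and that the relevant notion of a group of symmetries here is one that \emph{fixes} $\omega$, i.e.\ $G$ acts trivially on $H^0(\mathcal{X}, K_{\mathcal{X}}) = \mathbb{C}\,\omega$ (equivalently, $G$ acts locally through $SL_d(\mathbb{C})$). This $SL$-condition is precisely what holds for the phase symmetries $G_{n-1}$ of the Dwork pencil, since the product of all generators multiplies the coordinates by a common phase and is factored out in passing to $G_{n-1}=(\mathbb{Z}/(n+1)\mathbb{Z})^{n-1}$.

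Granting this, the $G$-invariant form $\omega$ descends to a nowhere-vanishing holomorphic $d$-form on the smooth locus of the quotient $\mathcal{X}/G$, so the only obstruction to $\mathcal{X}/G$ being Calabi-Yau is the presence of quotient singularities along the fixed-point loci of $G$. Here I would invoke the local structure theory: because $G$ acts locally through $SL_d(\mathbb{C})$, every such singularity is Gorenstein canonical by the Reid--Tai criterion. The key step is then to pass to a crepant resolution $\pi : \widetilde{Y} \to \mathcal{X}/G$, for which $K_{\widetilde{Y}} = \pi^{*} K_{\mathcal{X}/G}$; since the descended $\omega$ trivializes $K_{\mathcal{X}/G}$, its pullback extends to a nowhere-vanishing holomorphic $d$-form on $\widetilde{Y}$, exhibiting $\widetilde{Y}$ as Calabi-Yau.

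The hard part will be the existence and crepancy of the resolution: a crepant resolution need not exist for arbitrary $d$, so in general one either restricts to dimensions where it is guaranteed (crepant resolutions of Gorenstein quotient singularities always exist for $d \le 3$, which covers the K3 case $d=2$ of interest here) or replaces a ``smooth resolution'' by a minimal Gorenstein model. I would verify that the discrepancy-zero condition is exactly what allows the holomorphic volume form to extend across the exceptional divisor with neither zero nor pole, since a positive discrepancy would force a zero of $\pi^{*}\omega$ along an exceptional divisor and a negative one a pole.

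Finally, for the claim about deformations, I would argue that triviality of the canonical bundle is stable under deformation. Concretely, deformations of a Calabi-Yau variety are unobstructed by the Bogomolov--Tian--Todorov theorem, and for a flat family $\{\mathcal{Z}_s\}$ with $\mathcal{Z}_0 = \widetilde{Y}$ the canonical bundles fit into a family; since $K_{\mathcal{Z}_0}$ is trivial and $h^0(\mathcal{Z}_s, K_{\mathcal{Z}_s})$ is semicontinuous while $c_1$ stays zero, the nearby fibers again admit a nowhere-vanishing holomorphic volume form and hence remain Calabi-Yau. This closes the chain $\mathcal{X} \rightsquigarrow \mathcal{X}/G \rightsquigarrow \widetilde{Y} \rightsquigarrow \{\mathcal{Z}_s\}$, each step preserving the Calabi-Yau property.
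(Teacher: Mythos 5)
The paper does not actually prove this proposition: it is quoted as a known result from \cite{MR1677117}*{Prop.~4.2.3} and used as a black box, so there is no in-paper argument to compare yours against. Your outline is the standard proof of the quoted fact, and the overall strategy --- descend the $G$-invariant volume form to the smooth locus of $\mathcal{X}/G$, observe that the singularities are Gorenstein canonical because $G$ acts locally through $SL_d(\mathbb{C})$, pull the form back along a crepant resolution, and then propagate triviality of the canonical bundle to deformations --- is correct. You are also right to insist that ``group of symmetries'' must mean a group fixing the holomorphic volume form; without that hypothesis the statement is false (a K3 modulo an Enriques involution is not Calabi-Yau in the trivial-canonical-bundle sense), and for the Dwork symmetries $G_{n-1}$ the $SL$-condition holds exactly as you describe. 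Your caveat about crepant resolutions is the genuine subtlety: for $d\ge 4$ a smooth crepant resolution need not exist, and the cited source handles this by working with Batyrev's maximal projective crepant \emph{partial} desingularizations; for the K3 case $d=2$ used throughout this paper the minimal resolution of the rational double points is automatically crepant, so the application is safe.

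One step deserves tightening. In the deformation argument, upper semicontinuity of $h^0(\mathcal{Z}_s,K_{\mathcal{Z}_s})$ only bounds this number \emph{above} by $h^0(\mathcal{Z}_0,K_{\mathcal{Z}_0})=1$ on nearby fibers, so it does not by itself produce a holomorphic volume form on $\mathcal{Z}_s$. To get existence you should instead invoke local constancy of the Hodge numbers for a family of compact K\"ahler manifolds (or deformation-invariance of $\chi(\mathcal{O})$ together with the vanishing of the intermediate $h^{0,q}$), which forces $h^{d,0}(\mathcal{Z}_s)=1$; the resulting section is then nowhere vanishing because its zero divisor is an effective divisor representing $c_1(K_{\mathcal{Z}_s})=0$, which is the same mechanism you correctly use on the resolution. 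With that repair, and with ``smooth resolution'' read as ``crepant (partial) resolution'' in dimensions where smoothness cannot be guaranteed, your argument is a complete proof of the statement as the cited reference intends it.
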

The subspace of the cohomology $H^{n-1}(\mathcal{X}_{\lambda},\mathbb{Q})$ which is invariant under the action of $G_{n-1}$ or, equivalently, the cohomology $H^{n-1}(\mathcal{Y}_{\mu},\mathbb{Q})$ has dimension $n$ and Hodge numbers $(1, \dots, 1)$. Thus, the family $\mathcal{Y}_{\mu}$ is the \emph{mirror family} of the hypersurfaces $\mathcal{Z}$ in $\mathbb{P}^n$ of degree $(n+1)$ and co-dimension one in $\mathbb{P}^n$, in the sense of mirror symmetry motivated by string theory \cite{MR1101784}. There is also a formulation of this construction due to Batyrev which works for any family of Calabi-Yau hypersurfaces in toric varieties, constructed from reflexive polytopes, such that a dual pair of reflexive polytopes gives rise to the mirror pair of Calabi-Yau hypersurfaces \cites{MR1416334, MR1860046}.
\par We prove in Section~\ref{sec:K3_19} that the Greene-Plesser orbifolding construction for K3 surfaces factors through a Kummer sandwich that can be established for the family $\mathcal{Y}_\mu$, implying that each K3 surface is dominated and dominates a Kummer surface of Picard rank 19; see Theorem~\ref{prop4}. In particular, there are rational maps
\beq
 \mathcal{X}_\lambda \dashrightarrow \operatorname{Kum}(\mathcal{E} \times \mathcal{E}') \dashrightarrow \mathcal{Y}_\mu 
 \dashrightarrow \operatorname{Kum}(\mathcal{E} \times \mathcal{E}')  \,,
\eeq 
where $\mathcal{E}$ and $\mathcal{E}'$ are the two-isogeneous elliptic curves. This fact implies that the quartic mirror family is the rational cover of a twisted Legendre pencil $\mathcal{Y}'_\mu$ whose period mapping can be computed easily. In this way, the famous period computation of Narumiya and Shiga~\cite{MR1877764} is seen to be a consequence of the existence of suitable Kummer sandwich theorems for the quartic mirror family.
\par  In Section~\ref{sec:K3_17} we will show that certain K3 double covers obtained from the three-parameter family of quartic Kummer surfaces generalize many features present for the quartic mirror family. Concretely, the general Kummer quartic
\begin{small}
\begin{gather}
\label{Goepel-Quartic_intro}
  0  = X_0^4+X_1^4+X_2^4+X_3^4  + 2  D  X_0 X_1 X_2 X_3\\
     - A  \big(X_0^2 X_1^2+X_2^2 X_3^2\big)  - B \big(X_0^2 X_2^2+X_1^2 X_3^2\big) - C  \big(X_0^2 X_3^2+X_1^2 X_2^2\big)   \,, 
\end{gather}
\end{small}%
where $A, B, C, D \in \mathbb{C}$ and $D^2 = A^2 + B^2 + C^2 + ABC - 4$, is the multi-parameter generalization of the quartic Dwork pencil in Equation~\eqref{Fermat} with a discrete group of symmetries broken from $(\mathbb{Z}/4 \mathbb{Z})^2$ to $G=(\mathbb{Z}/2 \mathbb{Z})^2$. The general Kummer quartic is the Kummer surface $\operatorname{Kum}(\mathcal{A})$ associated with a principally polarized abelian surface $\mathcal{A}$. The minimal resolution of $\operatorname{Kum}(\mathcal{A})/G$ is again a Kummer surface, namely $\operatorname{Kum}(\mathcal{A}')$ associated with the $(2,2)$-isogeneous abelian surface $\mathcal{A}'$. We also find that the aforementioned Kummer sandwich theorem generalizes to Picard rank $17$; see Theorem~\ref{prop6}. In particular, there are rational maps of degree two such that
\beq
\label{eqn:sandwich_intro}
\operatorname{Kum}(\mathcal{A}) \dashrightarrow \mathcal{Y}  \dashrightarrow \operatorname{Kum}(\mathcal{A}) \quad \text{and} \quad
\operatorname{Kum}(\mathcal{A}) \dashrightarrow \mathcal{Y}  \dashrightarrow \operatorname{Kum}(\mathcal{A}') \,,
\eeq 
where $\mathcal{Y}$ is the multi-parameter generalization of the mirror family $\mathcal{Y}_\mu$. Just as before, this fact implies that the family $\mathcal{Y}$ is a rational cover of a twisted Legendre pencil $\mathcal{Y}'$. We then give an explicit formula for the rational point-count for the family $\mathcal{Y}$; see Theorem~\ref{thm:main2}. By carrying out the rational point-count with respect to either of \emph{two} elliptic fibrations -- resulting from the \emph{two} sandwiches in Equation~\eqref{eqn:sandwich_intro} -- we also derive the transformation of the counting function with respect to $(2,2)$-isogenies of the underlying abelian surface.  Our method also produces an explicit count for the number of rational points on the Jacobian of a genus-two curve over $\mathbb{F}_p$, which has been notoriously difficult to handle based on a traditional group action approach~\cite{MR700577}.
\subsection*{Acknowledgments}
The first author would like to acknowledge the support from an Undergraduate Research and Creative Opportunity (URCO) Grant by the Office of Research and Graduate Studies at Utah State University.  The second author acknowledges support from the Simons Foundation through grant no.~202367.
\bigskip
\section{The mirror-quartic family}
\label{sec:K3_19}
In the case $n=3$ in Equation~\eqref{Fermat}, it was proved in~\cite{MR1877764} that the mirror-quartic $\mathcal{Y}_{\lambda^2}$, considered as a family depending on the parameter $\lambda^2$, is a family of toric Calabi-Yau varieties arising from the polytope $P_0^*$ dual to the simplest reflexive polytope $P_0$ in dimension $3$. The details of the general construction of a family of Calabi-Yau varieties arising from a reflexive polytope can be found in \cite{MR1299003}.  In turn, it was also shown in~\cite{MR1877764} that the quartic surfaces $\mathcal{Z}$ in $\mathbb{P}^3$ are the toric Calabi-Yau varieties arising from the reflexive polytope $P_0$ itself. 
\par The one-dimensional families $\mathcal{X}_{\lambda}$ and $\mathcal{Y}_{\lambda^2}$ can also be described as families of K3 surfaces with canonical lattice polarization. The following is known \cite{MR1420220}:
\begin{lemma}
\label{lem:families19}
The families $\mathcal{X}_{\lambda}$ and $\mathcal{Y}_{\lambda^2}$ are families of lattice polarized K3 surfaces of Picard rank $19$.
In particular, the family $\mathcal{Y}_{\lambda^2}$ is polarized by the rank-19 lattice $M_2 = U \oplus E_8(-1) \oplus E_8(-1) \oplus \langle -4 \rangle$ such that its general member has the N\'eron-Severi lattice $\operatorname{NS}(\mathcal{Y}_{\lambda^2}) \cong M_2$ and the transcendental lattice $\operatorname{T}_{\mathcal{Y}_{\lambda^2}} \cong  U \oplus \langle 4 \rangle$.
\end{lemma}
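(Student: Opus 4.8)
The plan is to reduce the statement to a computation of orthogonal complements inside the K3 lattice $\Lambda_{K3} = U^{\oplus 3} \oplus E_8(-1)^{\oplus 2}$, and then to pin down the generic Picard rank by a monodromy argument. Recall that a smooth quartic $\mathcal{Z} \subset \mathbb{P}^3$ carries the primitive hyperplane class $h$ with $h^2 = 4$, so a very general member has Picard rank one with $\operatorname{NS}(\mathcal{Z}) \cong \langle 4 \rangle$ and transcendental lattice $\operatorname{T}_{\mathcal{Z}} \cong \langle 4 \rangle^{\perp}_{\Lambda_{K3}}$. By the Greene--Plesser / Dolgachev mirror correspondence recalled in the introduction, $\mathcal{Y}_{\lambda^2}$ is the mirror family of $\mathcal{Z}$, and Dolgachev's construction realizes its polarizing lattice as the mirror lattice obtained by splitting a hyperbolic plane off $\operatorname{T}_{\mathcal{Z}}$. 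Thus the heart of the proof is the pair of lattice identities $\langle 4\rangle^{\perp}_{\Lambda_{K3}} \cong U \oplus M_2$ and $M_2^{\perp}_{\Lambda_{K3}} \cong U \oplus \langle 4 \rangle$.

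First I would embed $\langle 4 \rangle$ into a single hyperbolic plane $U = \langle e, f\rangle$ via $h = e + 2f$; a direct check gives $h^2 = 4$, shows $h$ is primitive, and identifies $h^{\perp}\cap U = \langle e - 2f\rangle \cong \langle -4\rangle$. Hence $\langle 4 \rangle^{\perp}_{\Lambda_{K3}} \cong U^{\oplus 2} \oplus E_8(-1)^{\oplus 2} \oplus \langle -4\rangle$, which splits as $U \oplus M_2$ with $M_2 = U \oplus E_8(-1)^{\oplus 2} \oplus \langle -4 \rangle$, yielding $\operatorname{NS}(\mathcal{Y}_{\lambda^2}) \cong M_2$. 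The same local computation, now placing the generator of square $-4$ inside one of the two remaining copies of $U$, gives $M_2^{\perp}_{\Lambda_{K3}} \cong U \oplus \langle 4 \rangle$, so $\operatorname{T}_{\mathcal{Y}_{\lambda^2}} \cong U \oplus \langle 4 \rangle$. To make these identifications rigorous one must know that the relevant primitive embeddings are unique up to isometry; I would supply this via Nikulin's theory of discriminant forms, checking that both $M_2$ and $\langle 4 \rangle$ have discriminant group $\mathbb{Z}/4\mathbb{Z}$ with mutually opposite discriminant quadratic forms and that the rank and signature hypotheses of Nikulin's existence-and-uniqueness criterion hold, which they do since the ambient $\Lambda_{K3}$ is even, unimodular, and of signature $(3,19)$.

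Next I would verify that the family is genuinely $M_2$-polarized and that the generic Picard rank equals $19$. For the containment $M_2 \hookrightarrow \operatorname{NS}$ over the whole base, I would exhibit the algebraic classes explicitly: the toric boundary divisors together with the exceptional curves produced in resolving the singularities of the anticanonical hypersurface attached to the reflexive polytope $P_0^*$ span a copy of $M_2$, so every member is $M_2$-polarized; concretely, the summand $U \oplus E_8(-1)^{\oplus 2}$ is visible through an elliptic fibration carrying two fibers of type $\mathrm{II}^{*}$ with a section, the remaining rank-one piece $\langle -4 \rangle$ coming from a further algebraic class. That the generic rank is not larger than $19$ then follows because the transcendental lattice has rank exactly $3$: the family is one-dimensional and non-isotrivial, and the periods of the holomorphic two-form satisfy the third-order Picard--Fuchs equation of Narumiya--Shiga, whose monodromy acts irreducibly on a rank-$3$ local system. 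Consequently $\operatorname{T}_{\mathcal{Y}_{\lambda^2}}$ has rank $3$, forcing $\rho = 22 - 3 = 19$ for the very general member and hence $\operatorname{NS}(\mathcal{Y}_{\lambda^2}) \cong M_2$. The identical count applies to $\mathcal{X}_{\lambda}$: its symmetry group $G_2 = (\mathbb{Z}/4\mathbb{Z})^2$ fixes $H^{2,0}$ and forces the non-invariant part of $H^2$ to be algebraic, so for very general $\lambda$ the action on the Hodge-irreducible lattice $\operatorname{T}_{\mathcal{X}_\lambda}$ is trivial and $\operatorname{T}_{\mathcal{X}_\lambda} \cong \operatorname{T}_{\mathcal{Y}_{\lambda^2}}$ again has rank $3$, whence Picard rank $19$.

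The main obstacle is this last step rather than the pleasant lattice bookkeeping: proving that the generic Picard rank is exactly $19$, and not merely at least $19$. The clean route is the identification of the rank of the transcendental lattice with the order of the Picard--Fuchs operator, which in turn rests on the non-isotriviality of the family and the irreducibility of the associated monodromy representation; carrying this out carefully, rather than performing the formal orthogonal-complement and discriminant-form computations, is where the real content of the lemma lies.
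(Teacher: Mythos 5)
The paper gives no proof of this lemma at all: it is introduced with ``The following is known'' and a citation to Dolgachev's paper on lattice-polarized mirror symmetry, so your write-up is being compared against a bare reference rather than an internal argument. Your reconstruction is essentially the correct one and matches what the cited literature (Dolgachev together with Narumiya--Shiga) actually does: the embedding $h=e+2f$ giving $\langle 4\rangle^{\perp}\cong U\oplus M_2$ and $M_2^{\perp}\cong U\oplus\langle 4\rangle$ inside $U^{3}\oplus E_8(-1)^{2}$, secured by Nikulin's existence-and-uniqueness criterion (rank $3\ge l(A_{M_2})+2$, opposite discriminant forms on $\mathbb{Z}/4\mathbb{Z}$), is correct, and your two-sided bound on the generic Picard number --- explicit algebraic classes for $\rho\ge 19$, the third-order Picard--Fuchs operator with irreducible monodromy (or, more cheaply, non-isotriviality plus the countability of $\rho=20$ surfaces) for $\rho\le 19$ --- is the right mechanism and is where, as you say, the real content sits. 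Two details should be flagged. First, your claim that $U\oplus E_8(-1)^{2}$ is visible through a fibration with two $\mathrm{II}^{*}$ fibers is an abstract feature of $M_2$-polarized surfaces, not something the toric resolution of $\mathcal{Y}_{\lambda^2}$ hands you directly; what the paper actually exhibits (Lemma~\ref{lem:fibration} and the lemma following it) is a fibration with two $I_4^{*}$ fibers, four $I_1$ fibers, a two-torsion section and a height-one section, from which rank $19$ and discriminant $4$ follow by Shioda--Tate --- this is the concrete substitute for both your asserted $\mathrm{II}^{*}$ fibration and your unverified assertion that the toric boundary classes span a copy of $M_2$. Second, $\operatorname{T}_{\mathcal{X}_\lambda}$ is not isometric to $\operatorname{T}_{\mathcal{Y}_{\lambda^2}}$; the quotient map only induces a rational Hodge isometry (an isogeny of transcendental lattices), which suffices for the equality of ranks that the lemma needs, but the ``$\cong$'' should be weakened accordingly.
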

Here $U$ denotes the hyperbolic rank-two lattice, and $E_8(-1)$ the unique negative definite even unimodular lattice of rank eight. In \cite{MR1420220} Dolgachev established a mathematical framework of mirror symmetry for K3 surfaces in terms of Arnold's strange duality: mirror symmetry for K3 surfaces identifies marked deformations of K3 surfaces $\mathcal{Z}$ with given Picard lattice $N$ with a complexified K\"ahler cone $K(M) = \lbrace x + i y: \, \langle y, y \rangle > 0, \; x, y \in M_\mathbb{R} \rbrace$ for some mirror lattice $M$; for many lattices, one can construct $M$ explicitly by taking a copy of $U$ out of the orthogonal complement $N^\perp$ in the K3 lattice $L \cong U^3 \oplus E_8(-1) \oplus E_8(-1)$.  In the case of the rank-one lattice $N = \langle 2k \rangle$, it turns out that $M \cong  U \oplus E_8(-1) \oplus E_8(-1) \oplus \langle -2k \rangle$ is unique if $k$ has no square divisor. As an application, one considers the smooth quartic surfaces $\mathcal{Z}$ in $\mathbb{P}^3$ with $\operatorname{NS}(\mathcal{Z})=\langle 4 \rangle$: the generic member of the family of K3 surfaces $\mathcal{Y}_{\lambda^2}$ obtained by the Greene-Plesser orbifolding method then obeys Arnold's duality since
\beq
 \operatorname{NS}(\mathcal{Y}_{\lambda^2})^\perp \cong U \oplus \operatorname{NS}(\mathcal{Z}) \;.
\eeq
\par On the other hand, each member of the family of K3 surfaces polarized by the lattice $M_k = U \oplus E_8(-1) \oplus E_8(-1) \oplus \langle -2k \rangle$ (for any $k$) admit what is known as \emph{Shioda-Inose structure} \cites{MR728142, MR0441982}: a K3 surface $\mathcal{Y}$ is said to have a Shioda-Inose structure if it admits a rational map $\phi: \mathcal{Y} \dasharrow \operatorname{Kum}(\mathcal{A})$ of degree two into a Kummer surface $\operatorname{Kum}(\mathcal{A})$, i.e., the K3 surface obtained as the smooth resolution of $\mathcal{A} /\lbrace \pm \mathbb{I} \rbrace$ for some abelian surface $\mathcal{A}$ with inversion automorphism $-\mathbb{I}$, such that the induced map $\phi_*$ is  a Hodge isometry between the transcendental lattices,
\beq
 \phi_*: \quad \operatorname{T}_\mathcal{Y}(2) \to \operatorname{T}_{\operatorname{Kum}(\mathcal{A})} \,.
\eeq 
Morrison proved that $\mathcal{Y}$ admits a Shioda-Inose structure if and only if there exists a Hodge isometry, $\operatorname{T}_\mathcal{Y} \cong \operatorname{T}_\mathcal{A}$, between the transcendental lattices of $\mathcal{Y}$ and an abelian surface $\mathcal{A}$ \cite{MR728142}. Thus, in the situation of the family of K3 surfaces polarized by the lattice $M_k = U \oplus E_8(-1) \oplus E_8(-1) \oplus \langle -2k \rangle$ (for any $k$) each generic member also admits such a Shioda-Inose structure, associated with the abelian surface $\mathcal{A} = \mathcal{E} \times \mathcal{E}'$ where $\mathcal{E}, \mathcal{E}'$ are elliptic curves, and $\mathcal{E}'$ is $k$-isogeneous to $\mathcal{E}$, i.e., $\mathcal{E}' = \mathcal{E}/(\mathbb{Z}/k\mathbb{Z})$ \cite{MR1420220}.
\subsection{Existence of a special elliptic fibration on the mirror-quartic}
\label{ssec:NSfibration}
As a reminder, an elliptic surface is a (relatively) minimal complex surface $\mathcal{Y}$ together with a Jacobian elliptic fibration, that is a holomorphic map $\pi: \mathcal{Y} \to \mathbb{P}^1$ to $\mathbb{P}^1$ such that the general fiber is a smooth curve of genus one together with a distinguished section $\sigma: \mathbb{P}^1 \to \mathcal{Y}$ that marks a smooth point in each fiber. To each Jacobian elliptic fibration $\pi: \mathcal{Y} \to \mathbb{P}^1$ there is an associated Weierstrass model obtained by contracting all components of reducible fibers not meeting $\sigma$. The complete list of possible singular fibers has been given by Kodaira~\cite{MR0165541}.  It encompasses two infinite families $(I_n, I_n^*, n \ge0)$ and six exceptional cases $(II, III, IV, II^*, III^*, IV^*)$. The Weierstrass model of a smooth K3 surface can always be written in the form
\beq
\label{eqn:WEQ}
 Y^2 = 4 X^3 -g_2(u) \, X  - g_3(u)  \,,
\eeq 
where $u$ is a suitable affine coordinate on the base curve $\mathbb{P}^1$, and $g_2$ and $g_3$ are polynomials in $u$ of degree at most $8$ and $12$, respectively. The section is given by the point at infinity in each smooth fiber. We denote the Mordell-Weil group of sections on the Jacobian elliptic surface $\pi: \mathcal{Y} \to \mathbb{P}^1$ by $\operatorname{MW}(\mathcal{Y},\pi,\sigma)$. If a Jacobian elliptic fibration admits in addition a two-torsion section $T \in \operatorname{MW}(\mathcal{Y},\pi,\sigma)$, then we can use a change of coordinates to write Equation~\eqref{eqn:WEQ} in the form
\beq
\label{eqn:WEQ2}
 Y^2 = X^3 + A(u) \, X^2 + B(u) \, X\,,
\eeq 
where $A$ and $B$ are polynomials of degree at most $4$ and $8$, respectively. 
\par In the case $n=3$ in Equation~\eqref{Fermat}, the family $\mathcal{Y}_{\lambda^2}$ is equivalent to a family of such minimal Weierstrass equations. We have the following:
\begin{lemma}\label{lem:fibration}
The family $\mathcal{Y}_{\lambda^2}$ is a family of Jacobian elliptic K3 surfaces given by Equation~\eqref{eqn:WEQ} with the Weierstrass coefficients 
\beq
\label{G2G3}
\begin{split}
 g_2 & = \frac{4}{3 \, \lambda^4} \,{u}^{2} \,  \left( u^4 + 8 \lambda^2 u^3 +(4\lambda^2-1)(4\lambda^2+1)u^2+ 8 \lambda^2 u + 1 \right) \,,\\
 g_3 & = \frac {4}{27  \, \lambda^6} \,{u}^{3} \, \left( u^2 +4 {\lambda}^{2} u +1 \right)  \left( 2 u^4 + 16 \lambda^2 u^3 + (32\lambda^4-5)u^2 + 16 \lambda^2u+2\right) \,.
\end{split}
\eeq
\end{lemma}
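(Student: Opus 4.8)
The plan is to produce the Jacobian elliptic fibration explicitly and then normalize it into the asserted Weierstrass shape. By Lemma~\ref{lem:families19} the generic member of $\mathcal{Y}_{\lambda^2}$ is an $M_2$-polarized K3 surface, and since $M_2 = U \oplus E_8(-1) \oplus E_8(-1) \oplus \langle -4\rangle$ contains a copy of the hyperbolic plane $U$, the surface carries a Jacobian elliptic fibration $\pi\colon \mathcal{Y}_{\lambda^2} \to \mathbb{P}^1$; a primitive isotropic class furnishes the fiber and the complementary generator of $U$ the section. To make $\pi$ explicit I would start from the affine Greene-Plesser quotient, i.e.\ Equation~\eqref{mirror_family} with $n=3$, namely $x_1 x_2 x_3 \, (x_1+x_2+x_3+1) + \tfrac{1}{256\,\lambda^4} = 0$, and introduce a base coordinate $u$ as a ratio of the affine variables adapted to the $S_3$-symmetry that permutes the $x_i$. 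For fixed generic $u$ the residual equation cuts out a genus-one curve, and one must exhibit a rational point on it, equivalently a section of $\pi$.

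Having fixed $u$ and a rational point, I would bring each fiber into the Weierstrass normal form \eqref{eqn:WEQ} by the standard birational reduction of a plane cubic: translate the chosen rational point to the inflection point at infinity, clear denominators, and complete the cube to remove the $X^2$-term, thereby reading off $g_2(u)$ and $g_3(u)$. Matching these against \eqref{G2G3} then amounts to fixing the two residual freedoms, namely a M\"obius reparametrization of the base and the rescaling $(X,Y)\mapsto(\rho^2 X,\rho^3 Y)$ which preserves \eqref{eqn:WEQ} while sending $(g_2,g_3)\mapsto(\rho^{-4}g_2,\rho^{-6}g_3)$. I would normalize $u$ so that the two most degenerate fibers lie over $u=0$ and $u=\infty$; the palindromic symmetry $u\leftrightarrow 1/u$ manifest in \eqref{G2G3} should then appear automatically and provides a first sanity check.

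As an independent verification I would compute the discriminant $\Delta = g_2^3 - 27\,g_3^2$. A direct expansion of \eqref{G2G3} gives $\Delta = \tfrac{16}{27\lambda^{12}}\,u^{10}\,Q(u)$ with $Q$ a palindromic quartic, so that $(\operatorname{ord}_u g_2,\operatorname{ord}_u g_3,\operatorname{ord}_u\Delta)=(2,3,10)$ at both $u=0$ and $u=\infty$; by Tate's algorithm these are fibers of type $I_4^*$, while the four roots of $Q$ contribute four nodal $I_1$ fibers. The Euler numbers $10+10+4\cdot 1 = 24$ confirm the K3 condition, and the Shioda-Tate formula yields $\rho = 2 + 8 + 8 + \operatorname{rank}\operatorname{MW}(\mathcal{Y}_{\lambda^2},\pi,\sigma) = 19$, forcing the Mordell-Weil rank to equal $1$, in agreement with the $M_2$-polarization of Lemma~\ref{lem:families19}.

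I expect the main obstacle to be the explicit identification of the base parameter $u$ together with a section: once the correct genus-one pencil and base normalization are pinned down, the reduction to \eqref{eqn:WEQ} and the verification of \eqref{G2G3} are mechanical. The delicate point is choosing the normalization of $u$ and the scaling $\rho$ so that the coefficients emerge in the exact palindromic form \eqref{G2G3}, rather than merely up to an overall quadratic twist; arranging the $u\leftrightarrow 1/u$ symmetry and the matching of the two $I_4^*$ fibers at $0$ and $\infty$ is what removes this ambiguity.
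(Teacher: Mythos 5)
Your overall strategy is the right one and matches the paper's in spirit: the paper's proof consists precisely of writing down an explicit change of variables (Equations~\eqref{eqn:WEQcoeffs}, taken from Narumiya--Shiga) that transforms the affine Greene--Plesser quotient \eqref{mirror_family} with $n=3$ into the Weierstrass form \eqref{eqn:WEQ} with coefficients \eqref{G2G3}. Your consistency checks --- the palindromic symmetry $u\leftrightarrow 1/u$, the discriminant computation giving two $I_4^*$ fibers at $u=0,\infty$ and four $I_1$ fibers, the Euler number $24$, and the Shioda--Tate count forcing Mordell--Weil rank one --- are all correct and agree with what the paper establishes in the lemma immediately following this one.

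There is, however, a genuine gap, and you have flagged it yourself: you never exhibit the elliptic parameter $u$ as an explicit rational function of $x_1,x_2,x_3$, nor a section, nor the resulting cubic to be reduced. The entire content of the lemma is the \emph{specific} pair of polynomials in \eqref{G2G3} with their particular dependence on $\lambda$; the existence of some Jacobian elliptic fibration follows from the $M_2$-polarization via Lemma~\ref{lem:families19}, but that alone cannot produce the coefficients. Nor can your verification step close the gap: the configuration of two $I_4^*$ fibers and four $I_1$ fibers with Mordell--Weil group $\mathbb{Z}/2\mathbb{Z}\oplus\langle 1\rangle$ is shared by a whole family of Weierstrass data, so matching fiber types and Euler numbers confirms consistency but does not pin down $g_2,g_3$ as functions of $\lambda$ --- for that one must actually carry out the substitution. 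The missing ingredient is exactly the explicit transformation \eqref{eqn:WEQcoeffs} (equivalently, the Narumiya--Shiga elliptic parameter and section on the quartic mirror), which is what the paper supplies and what would turn your outline into a proof.
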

\begin{proof}
Following Narumiya and Shiga \cite{MR1877764}, we set
\beq
\label{eqn:WEQcoeffs}
\begin{split}
 x_1 & = -{\frac { \left( 4\,{u}^{2}{\lambda}^{2}+3\,X{\lambda}^{2}+{u}
^{3}+u \right)  \left( 4\,{u}^{2}{\lambda}^{2}+3\,X{\lambda}^{2}+{u}^{
3}-2\,u \right) }{{6\lambda}^{2}u \left( 16\,{u}^{3}{\lambda}^{2}-3\,iY
{\lambda}^{2}+12\,Xu{\lambda}^{2}+4\,{u}^{4}+4\,{u}^{2} \right) }} \,,\\
x_2 & = -\,{\frac {16\,{u}^{3}{\lambda}^{2}-3\,iY{\lambda}^{2}+12\,Xu{
\lambda}^{2}+4\,{u}^{4}+4\,{u}^{2}}{8 u \left( 4\,{u}^{2}{\lambda}^{2}+3
\,X{\lambda}^{2}+{u}^{3}-2\,u \right) }} \,,\\
x_3 & = {\frac {{u}^{2}
 \left( 4\,{u}^{2}{\lambda}^{2}+3\,X{\lambda}^{2}+{u}^{3}-2\,u
 \right) }{{2\lambda}^{2} \left( 16\,{u}^{3}{\lambda}^{2}-3\,iY{\lambda
}^{2}+12\,Xu{\lambda}^{2}+4\,{u}^{4}+4\,{u}^{2} \right) }} \,,
\end{split}
\eeq
in Equation~\eqref{mirror_family} and obtain a Weierstrass equation of the form given by Equation~\eqref{eqn:WEQ} with the coefficients
given by Equations~\eqref{eqn:WEQcoeffs}.
\end{proof}
We also have the following:
\begin{lemma}
For generic parameter $\lambda$ the Jacobian elliptic fibration in Lemma~\ref{lem:fibration} has four singular fibers of Kodaira-type $I_1$, two singular fibers of Kodaira-type $I_4^*$, and the Mordell-Weil group $\mathbb{Z}/2\mathbb{Z} \oplus \langle 1 \rangle$, generated by a two-torsion section and an infinite-order section of height pairing one. 
\end{lemma}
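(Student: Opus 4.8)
The plan is to read the singular fibers off the discriminant of the Weierstrass model~\eqref{eqn:WEQ} with coefficients~\eqref{G2G3}, and then to determine the Mordell--Weil group by combining the Shioda--Tate formula with the lattice data of Lemma~\ref{lem:families19}. First I would form the discriminant $\Delta = g_2^3 - 27 g_3^2$. Since every polynomial factor in~\eqref{G2G3} is palindromic, the substitution $u \mapsto 1/u$ together with $X \mapsto u^{-4}X$, $Y \mapsto u^{-6}Y$ defines an automorphism $\iota$ of the elliptic surface interchanging the fibers over $u=0$ and $u=\infty$; hence these fibers share a Kodaira type and it suffices to analyze $u=0$. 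A direct computation shows that the leading ($u^6$) terms of $g_2^3$ and $27 g_3^2$ cancel and that in fact $\Delta = c\,u^{10}\,p(u)$ for a nonzero constant $c$ and a quartic $p$ with $p(0)\neq 0$ whose discriminant does not vanish identically in $\lambda$. Thus for generic $\lambda$ the quartic $p$ has four distinct nonzero roots, each giving a fiber of type $I_1$. At $u=0$ one has $\operatorname{ord}_0 g_2 = 2$, $\operatorname{ord}_0 g_3 = 3$, $\operatorname{ord}_0 \Delta = 10$, so $j \sim g_2^3/\Delta$ has a pole of order four; together with $\operatorname{ord}_0 g_2 = 2$ (which excludes types $IV^*, III^*, II^*$) Kodaira's table gives a fiber of type $I_4^*$, and by $\iota$ the fiber over $u=\infty$ is of the same type. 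As a consistency check the Euler numbers sum to $4\cdot 1 + 2\cdot 10 = 24$.

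Next I would compute the Mordell--Weil rank. The zero section, a general fiber, and the non-identity components of the two $I_4^*$ fibers span the trivial lattice $\operatorname{Triv} = U \oplus D_8 \oplus D_8$ of rank $18$, since an $I_4^*$ fiber contributes the root lattice $D_8$. By Lemma~\ref{lem:families19} the N\'eron--Severi lattice has rank $19$, so the Shioda--Tate formula yields $\operatorname{rank}\operatorname{MW} = 19-18 = 1$.

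For the torsion I would rewrite~\eqref{eqn:WEQ} in the form~\eqref{eqn:WEQ2}: a direct computation shows that the cubic $4X^3 - g_2 X - g_3$ has exactly one root $X_0(u)\in\mathbb{C}(u)$, with irreducible complementary quadratic factor for generic $\lambda$. Then $T=(X_0,0)$ is a two-torsion section, and irreducibility of the quadratic shows that $\operatorname{MW}[2] = \mathbb{Z}/2\mathbb{Z}$. Moreover the torsion subgroup injects into $G_{u=0}\times G_{u=\infty}\cong(\mathbb{Z}/2\mathbb{Z})^2\times(\mathbb{Z}/2\mathbb{Z})^2$: a nonzero torsion section $P$ has height zero, so Shioda's formula $0 = 4 + 2(P\cdot O) - \sum_v \operatorname{contr}_v(P)$ forces $\sum_v \operatorname{contr}_v(P) = 4$ with both contributions equal to their maximal value $2$, so $P$ meets a far component at each reducible fiber and the specialization map has trivial kernel. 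Hence $\operatorname{MW}_{\mathrm{tors}}$ is $2$-elementary, and since $\operatorname{MW}[2]=\mathbb{Z}/2\mathbb{Z}$ we conclude $\operatorname{MW}_{\mathrm{tors}}=\mathbb{Z}/2\mathbb{Z}$.

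Finally, the free part has rank one with a generator of height one, which I would obtain from Shioda's determinant relation $|\det\operatorname{NS}| = |\det\operatorname{Triv}|\cdot\det(\operatorname{MWL})/|\operatorname{MW}_{\mathrm{tors}}|^2$. With $|\det M_2| = 4$ from Lemma~\ref{lem:families19}, $|\det\operatorname{Triv}| = |\det(U\oplus D_8\oplus D_8)| = 16$, and $|\operatorname{MW}_{\mathrm{tors}}| = 2$, this gives $\det(\operatorname{MWL}) = 4\cdot 4/16 = 1$; as $\operatorname{MWL}$ has rank one, its determinant is the height of a generator, so the generator has height one. (Equivalently, one exhibits a section meeting a near component of one $I_4^*$ and a far component of the other and reads $\langle P,P\rangle = 4-1-2 = 1$ from the height formula.) I expect the main obstacle to be the fiber analysis at $u=0$ and $u=\infty$ --- in particular verifying that $\Delta$ vanishes to order exactly $10$ there, which is what pins the type down as $I_4^*$ --- together with showing that $4X^3 - g_2 X - g_3$ has exactly one rational root, which is precisely what forces $\operatorname{MW}_{\mathrm{tors}}=\mathbb{Z}/2\mathbb{Z}$ rather than $(\mathbb{Z}/2\mathbb{Z})^2$.
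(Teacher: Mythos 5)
Your proposal is correct and follows essentially the same route as the paper's (much terser) proof: determine the singular fibers from the Weierstrass data, get the Mordell--Weil rank $1$ from Picard rank $19$ via Shioda--Tate, identify the two-torsion section (the paper writes it down explicitly where you factor the cubic), and deduce height one from the determinant relation $4\cdot 4/16=1$. Your additional details --- the order-of-vanishing analysis pinning down $I_4^*$, the Euler-number check, and the specialization argument ruling out extra torsion --- are exactly the computations the paper compresses into ``one checks.''
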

\begin{proof}
The result follows by determining the singular fibers of the Weierstrass model in Equation~\eqref{eqn:WEQ}.  One checks that the Mordell-Weil group contains the two-torsion section $(X,Y)=(-  u \,  (4  \, u- u^2-1)/(3\, \lambda^2),0)$ generating its entire torsion.  We already know from Lemma~\ref{lem:families19} that the family of Jacobian elliptic K3 surfaces has Picard rank $19$. Thus, there must a section of infinite order as well.  The singular fibers and the torsion section of the elliptic fibration contribute to the determinant of the discriminant group a factor of $4^2/2^2=2^2$. Since the determinant of the discriminant group of $M_2$ is $4$, the infinite-order section has to have height pairing one.
\end{proof}
\subsection{Three Kummer sandwich theorems in Picard rank 18}
\label{ssec:Kummer18}
As a reminder, on an algebraic K3 surface $\mathcal{S}$  a {\it Nikulin involution} \cites{MR728142,MR544937} is an involution $\imath: \mathcal{S} \rightarrow \mathcal{S}$ that satisfies $\imath^*\omega_\mathcal{S} = \omega_\mathcal{S}$ for any holomorphic two-form $\omega_\mathcal{S}$ on $\mathcal{S}$. The fixed locus of $\imath$ always consists of eight distinct points. One takes the quotient of $\mathcal{S}$ by the involution $\imath$ and then resolves the eight resulting $A_1$-singularities on the quotient. This procedure, referred in the literature as the {\it Nikulin construction}, leads to a smooth K3 surface $\mathcal{Y}$, birational to $\mathcal{S}/\langle\imath \rangle$ and related to $\mathcal{S}$ via a rational double-cover map $\psi \colon \mathcal{S} \dashrightarrow \mathcal{Y}$.
\par As we will show, each K3 surface in Lemma~\ref{lem:fibration} actually admits a so-called \emph{Kummer Sandwich Theorem}, a term introduced by Shioda \cite{MR2279280}. A K3 surface $\mathcal{Y}$ is a Kummer sandwich if there is a Kummer surface $\mathcal{S}=\operatorname{Kum}(\mathcal{A})$ associated with an abelian surface $\mathcal{A}$ admitting two commuting Nikulin involutions $\imath, \jmath$ such that the quotient surface $\mathcal{S}/\langle \imath \rangle$ is birational to $\mathcal{Y}$, and the quotient surface  $\mathcal{S}/\langle \imath, \jmath \rangle$ is birational to $\mathcal{S}$ itself. Thus, $\mathcal{S}$ dominates and is dominated by $\mathcal{Y}$ by two rational maps of degree two
\beq
 \psi: \; \mathcal{S} \dasharrow \mathcal{Y} \,, \qquad \phi:  \mathcal{Y} \dasharrow \mathcal{S} \,.
 \eeq
\par However, let us first consider a generalization of the situation in Lemma~\ref{lem:fibration}: in \cite{MR1013073}  Oguiso studied the Kummer surface $\operatorname{Kum}(\mathcal{E}_1\times \mathcal{E}_2)$ obtained by the minimal resolution of the quotient surface of the product abelian surface  $\mathcal{A}=\mathcal{E}_1\times \mathcal{E}_2$ by the inversion automorphism, where the elliptic curves $\mathcal{E}_n$ for $n=1,2$ are \emph{not} mutually isogenous. Such Kummer surfaces are algebraic K3 surfaces of Picard rank $18$ and admit Jacobian elliptic fibrations that were classified by Oguiso \cite{MR1013073}: there are eleven inequivalent Jacobian elliptic fibration which we label $\mathcal{J}_1, \dots, \mathcal{J}_{11}$. Kuwata and Shioda furthered Oguiso's work in \cite{MR2409557} and computed elliptic parameters and Weierstrass equations for all eleven fibrations, and analyzed the reducible fibers and Mordell-Weil lattices. 
\par The Weierstrass equations in \cite{MR2409557} define families of minimal Jacobian elliptic fibrations over a two-dimensional moduli space: let $\lambda_n \in \mathbb{P}^1 \backslash \lbrace 0, 1, \infty \rbrace$ with $n=1, 2$ be the modular parameters defining the elliptic curves $\mathcal{E}_n$ using the equations
\beq
\label{eqn:EC}
 \mathcal{E}_n: \quad y_n^2  = x_n \big(x_n-1\big) \big(x_n- \lambda_n\big) \,,
\eeq
with hyperelliptic involutions $\imath_n: (x_n, y_n) \mapsto (x_n,-y_n)$.  The moduli space for the fibrations $\mathcal{J}_1, \dots, \mathcal{J}_{11}$ is then given by unordered pairs of modular parameters for the two elliptic curves with such level-two structure. In this paper, the elliptic fibrations $\mathcal{J}_1$, $\mathcal{J}_4$, $\mathcal{J}_6$, and $\mathcal{J}_7$ will be of particular importance.  Using the Hauptmodul or modular function $\lambda$ of level two for the genus-zero, index-six congruence subgroup $\Gamma(2) \subset \operatorname{PSL}_2(\mathbb{Z})$ we define the moduli space
\beq
\label{eqn:ModuliSpace}
 \mathcal{M} = \Big\{ \{ \lambda_1, \lambda_2 \} \mid \ \lambda_i=\lambda(\tau_i) \,, \tau_i\in \Gamma(2) \backslash \mathbb{H} \; \text{for $i=1,2$}\Big\}\,.
\eeq
When necessary we also consider the fibrations $\mathcal{J}_1, \dots, \mathcal{J}_{11}$ over the covering space of the moduli space $\mathcal{M}$ given by
\beq
\label{eqn:ModuliSpace_cover}
 \widetilde{\mathcal{M}} = \Big\{ (\{ \lambda_1, \lambda_2 \}, l) \mid \  \{\lambda_1,\lambda_2\} \in \mathcal{M} \;, \; l^2 =\lambda_1\lambda_2 \Big\} \,.
\eeq
 \par The simplest fibration on $\mathcal{S}_{\lambda_1, \lambda_2}=\operatorname{Kum}(\mathcal{E}_1\times \mathcal{E}_2)$ is called the \emph{double Kummer pencil}. It is the elliptic fibration with section, denoted by $\mathcal{J}_4$, induced from the projection of the abelian surface $\mathcal{E}_1\times \mathcal{E}_2$ onto its first factor. With the new variable $y_{12}=y_1 y_2$, an affine model of $\mathcal{J}_4$ is then given by the product of two copies of Equation~\eqref{eqn:EC}, re-written as
\beq
\label{eqn:J4}
 y_{12}^2 = x_1 \big(x_1-1\big) \big(x_1 -\lambda_1\big)  x_2 \big(x_2-1\big)\big(x_2-\lambda_2\big) \,,
\eeq
where $x_1$ is considered the affine coordinate of the base curve $\mathbb{P}^1$. The unique holomorphic two-form (up to scaling) on $\mathcal{S}_{\lambda_1, \lambda_2}$ is given by $\omega_\mathcal{S} = dx_1 \wedge dx_2/y_{1,2}$. The quotient variety $(\mathcal{E}_1\times \mathcal{E}_2)/\langle \imath_1 \times \imath_2 \rangle$ is birational to the Kummer surface $\operatorname{Kum}(\mathcal{E}_1\times \mathcal{E}_2)$ since the product involution $-\mathbb{I}=\imath_1 \times \imath_2$ is the inversion involution on the abelian surface $\mathcal{E}_1\times \mathcal{E}_2$. We have the following:
\begin{lemma}\label{lem:J4}
Equation~\eqref{eqn:J4} determines the elliptic fibration with section $\mathcal{J}_4$ on the Kummer surface $\mathcal{S}_{\lambda_1, \lambda_2}=\operatorname{Kum}(\mathcal{E}_1\times \mathcal{E}_2)$ associated with the elliptic curves $\mathcal{E}_1, \mathcal{E}_2$ in Equation~\eqref{eqn:EC}. Generically, the Weierstrass model has four singular fibers of Kodaira-type $I_0^*$ at $x_1=0, 1, \lambda_1, \infty$, and the Mordell-Weil group $(\mathbb{Z}/2\mathbb{Z})^2$.
\end{lemma}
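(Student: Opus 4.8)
The plan is to convert Equation~\eqref{eqn:J4} into Weierstrass form, recognize the resulting fibration as isotrivial, and then combine Shioda--Tate with a discriminant computation to pin down the Mordell--Weil group. First I would regard $x_1$ as the affine coordinate on the base $\mathbb{P}^1$ and abbreviate $c = c(x_1) = x_1(x_1-1)(x_1-\lambda_1)$, so that Equation~\eqref{eqn:J4} reads $y_{12}^2 = c \cdot x_2(x_2-1)(x_2-\lambda_2)$. Clearing the leading coefficient via the substitution $X = c\, x_2$, $Y = c\, y_{12}$ yields the Weierstrass form
\[
 Y^2 = X\,(X - c)\,(X - c\,\lambda_2) \,,
\]
which exhibits three two-torsion sections at $X = 0,\, c,\, c\lambda_2$, so that $(\mathbb{Z}/2\mathbb{Z})^2 \subseteq \operatorname{MW}$. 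The identification with the double Kummer pencil $\mathcal{J}_4$ is immediate from the construction: the first projection $\mathcal{E}_1 \times \mathcal{E}_2 \to \mathcal{E}_1$ descends modulo the inversion involution to the quotient map $\operatorname{Kum}(\mathcal{E}_1 \times \mathcal{E}_2) \to \mathcal{E}_1/\langle\imath_1\rangle = \mathbb{P}^1_{x_1}$, and the generic fiber is a copy of $\mathcal{E}_2$.

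Next I would determine the singular fibers. The cross-ratio of the three finite roots $0,\, c,\, c\lambda_2$ together with the point at infinity equals $\lambda_2$, independent of $x_1$; hence the fibration is isotrivial with constant $j$-invariant $j(\lambda_2)$, realized as the quadratic twist family $w^2 = c(x_1)\cdot u(u-1)(u-\lambda_2)$. For generic $\lambda_2$ one has $j \neq 0,\, 1728$, so every singular fiber is necessarily of additive type, and the twisting function $c$ forces type $I_0^*$ wherever it vanishes or has a pole of odd order. A direct computation gives the discriminant $\Delta = 16\, c^6\, \lambda_2^2(1-\lambda_2)^2$, vanishing to order six precisely at the three simple roots $x_1 = 0,\, 1,\, \lambda_1$ of $c$. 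Since the discriminant of a Jacobian elliptic K3 surface has total degree $24$ and the three finite fibers account for $18$, the fiber over $x_1 = \infty$, where $c$ has a pole of order three, contributes the remaining order six; thus there are exactly four $I_0^*$ fibers at $x_1 = 0,\, 1,\, \lambda_1,\, \infty$. The Euler-number bookkeeping $4 \times 6 = 24 = e(\text{K3})$ confirms that no further singular fibers occur.

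Then I would compute the Mordell--Weil rank via Shioda--Tate. Each $I_0^*$ fiber carries a $\widetilde{D}_4$ configuration of five components, so $\rho = 2 + 4\cdot 4 + \operatorname{rank}\operatorname{MW} = 18 + \operatorname{rank}\operatorname{MW}$. Since $\operatorname{Kum}(\mathcal{E}_1\times\mathcal{E}_2)$ for non-isogenous factors has Picard rank $18$, the free rank of $\operatorname{MW}$ vanishes, and the Mordell--Weil group is pure torsion containing $(\mathbb{Z}/2\mathbb{Z})^2$.

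The main obstacle is establishing that the torsion is \emph{exactly} $(\mathbb{Z}/2\mathbb{Z})^2$. For this I would invoke the discriminant relation $|\det\operatorname{NS}| = |\det T_{\mathrm{triv}}| \,/\, |\operatorname{MW}_{\mathrm{tor}}|^2$, valid because the free part of $\operatorname{MW}$ is trivial. The trivial lattice $T_{\mathrm{triv}} = U \oplus D_4^{\oplus 4}$ has $|\det| = 4^4 = 256$, while the transcendental lattice $\operatorname{T}_{\operatorname{Kum}} \cong U(2)\oplus U(2)$ of the non-isogenous Kummer surface gives $|\det\operatorname{NS}| = |\det \operatorname{T}_{\operatorname{Kum}}| = 16$. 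Hence $|\operatorname{MW}_{\mathrm{tor}}|^2 = 256/16 = 16$, so $|\operatorname{MW}_{\mathrm{tor}}| = 4$ and $\operatorname{MW} = (\mathbb{Z}/2\mathbb{Z})^2$. The delicate points to verify carefully are the minimality of the Weierstrass model at $x_1 = \infty$ (so that the order-three pole of $c$ genuinely yields $I_0^*$ rather than a more degenerate type) and the precise form of the transcendental lattice $\operatorname{T}_{\operatorname{Kum}} \cong U(2)\oplus U(2)$ feeding into the discriminant count.
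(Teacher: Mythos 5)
Your proof is correct. The paper's own proof of this lemma is a one-line deferral: it says the singular fibers and Mordell--Weil group are determined from the Weierstrass model and then matched against the tables of Kuwata--Shioda \cite{MR2409557}. You instead give a self-contained argument, and the extra structure you exploit is worth noting: by clearing denominators to reach $Y^2 = X(X-c)(X-c\lambda_2)$ with $c = x_1(x_1-1)(x_1-\lambda_1)$, you recognize the fibration as an isotrivial quadratic twist of the constant curve $\mathcal{E}_2$, which immediately yields the full two-torsion, the constant $j$-invariant, and the location of the $I_0^*$ fibers at the odd-order zeros and pole of $c$. Your determination of the torsion via the Shioda--Tate discriminant formula $|\det\operatorname{NS}| = |\det T_{\mathrm{triv}}|/|\operatorname{MW}_{\mathrm{tor}}|^2$, fed by $T_{\operatorname{Kum}} \cong U(2)\oplus U(2)$, replaces the paper's appeal to the classification and is exactly the kind of lattice-theoretic bookkeeping the paper itself carries out later (after Lemma~\ref{lem:J1}), so it is fully consistent with the authors' toolkit. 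The two ``delicate points'' you flag both close cleanly: minimality at $x_1=\infty$ follows from the rescaling $X \mapsto X/s^4$, $Y\mapsto Y/s^6$ in the chart $s = 1/x_1$, after which the twisting function $s^4 c$ has a simple zero; alternatively, since the generic fiber has finite $j\neq 0, 1728$, the only singular fiber type with Euler number $6$ compatible with potential good reduction is $I_0^*$, so the Euler-number count $24-18=6$ already forces the conclusion. In short, your route buys a citation-free proof at the cost of a page of computation; the paper's buys brevity at the cost of outsourcing the verification.
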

\begin{proof}
The result follows by determining the singular fibers and the Mordell-Weil group of sections of the Weierstrass model in Equation~\eqref{eqn:J4} and comparing these with the results in \cite{MR2409557}.
\end{proof}
\par A birational transformation with $t=y_2 x_1/(y_1x_2)$ given in \cite{MR2409557}*{Sec.~2.1} changes Equation~\eqref{eqn:J4} into the equation
\beq
\label{eqn:J1a}
y^2 = x^3 + \Big((\lambda_1-1)^2 t^4 - 2(\lambda_1+1)(\lambda_2+1) t^2 + ( \lambda_2-1)^2\Big) x^2 + 16 \lambda_1 \lambda_2 t^4 x \,.
\eeq
After a rescaling of the coordinates, we obtain the more symmetric equation
\beq
\label{eqn:J1}
y^2 = x^3 + t^2 \Big((\lambda_1-1)(\lambda_2-1)(t^2+t^{-2}) - 2(\lambda_1+1)(\lambda_2+1)  \Big) x^2 + 16 \lambda_1 \lambda_2 t^4 x \,,
\eeq
with a holomorphic two-form $\omega_\mathcal{S} = dt \wedge dx/y$, where $x, y$ are the affine coordinates of the elliptic fiber, and $t$ is the affine coordinate of the base curve $\mathbb{P}^1$. This establishes the fibration $\mathcal{J}_1$ from \cite{MR2409557} on  $\mathcal{S}_{\lambda_1, \lambda_2}=\operatorname{Kum}(\mathcal{E}_1\times \mathcal{E}_2)$. We make the following:
\begin{remark}
\label{rem:involution1}
An automorphism $\jmath \in \operatorname{Aut}(\mathcal{S}_{\lambda_1, \lambda_2})$ of Equation~\eqref{eqn:J1} is given by 
\beq
\label{eqn:involution1}
\jmath: \;  (t, x, y) \mapsto \left( \frac{1}{t}, \frac{x}{t^4}, - \frac{y}{t^6} \right)\,,
 \eeq
and leaves $\omega_\mathcal{S}$ invariant. Hence, $\jmath$ is a Nikulin involution. 
\end{remark}
\par The following lemma was proved in \cite{MR2409557}*{Sec.~2.1}:
\begin{lemma}
\label{lem:J1}
Equation~\eqref{eqn:J1} determines the elliptic fibration with section $\mathcal{J}_1$ on the Kummer surface $\mathcal{S}_{\lambda_1, \lambda_2}=\operatorname{Kum}(\mathcal{E}_1\times \mathcal{E}_2)$ associated with the elliptic curves $\mathcal{E}_1, \mathcal{E}_2$ in Equation~\eqref{eqn:EC}. Generically, the Weierstrass model has two singular fibers of Kodaira-type $I_8$ at $t=0, \infty$, eight singular fibers of type $I_1$ located over a collection of base points invariant under $t \mapsto -t$ and $t \mapsto 1/t$, and the Mordell-Weil group $\mathbb{Z}^2 \oplus \mathbb{Z}/2\mathbb{Z}$. 
\end{lemma}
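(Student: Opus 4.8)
The plan is to reduce the statement to the already-established double Kummer pencil $\mathcal{J}_4$ together with a direct analysis of the Weierstrass form in Equation~\eqref{eqn:J1}. First I would note that the birational transformation $t=y_2x_1/(y_1x_2)$ carries Equation~\eqref{eqn:J4} into Equation~\eqref{eqn:J1a}, and the subsequent rescaling of the fiber coordinates produces Equation~\eqref{eqn:J1}; since birational K3 surfaces are isomorphic, Equation~\eqref{eqn:J1} defines the Kummer surface $\mathcal{S}_{\lambda_1,\lambda_2}=\operatorname{Kum}(\mathcal{E}_1\times\mathcal{E}_2)$ itself, now fibered over the $t$-line. Identification of this fibration with $\mathcal{J}_1$ in Oguiso's classification \cite{MR1013073} will follow once its singular fibers and Mordell-Weil lattice match the corresponding entry.

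Next I would read off the singular fibers from the extended Weierstrass form $y^2=x^3+A(t)\,x^2+B(t)\,x$, where $A(t)=(\lambda_1-1)(\lambda_2-1)(t^4+1)-2(\lambda_1+1)(\lambda_2+1)\,t^2$ and $B(t)=16\,\lambda_1\lambda_2\,t^4$, whose discriminant is proportional to $B^2(A^2-4B)$. Since $B$ vanishes to order four at $t=0$ while $A(0)=(\lambda_1-1)(\lambda_2-1)\neq0$, the factor $B^2$ contributes a zero of order eight there and $A^2-4B$ is nonvanishing; Tate's algorithm (equivalently, the pole of order eight of the $j$-invariant) then gives a fiber of Kodaira type $I_8$ at $t=0$. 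The Nikulin involution $\jmath$ of Remark~\ref{rem:involution1} interchanges $t=0$ and $t=\infty$, so the fiber over $t=\infty$ is likewise of type $I_8$. The remaining singular fibers lie over the zeros of $A^2-4B$, a degree-eight polynomial in $t$; I would verify that for generic $(\lambda_1,\lambda_2)$ these eight roots are simple and disjoint from $\{0,\infty\}$, giving eight fibers of type $I_1$. The evenness of $A$ and $B$ in $t$ makes the root set invariant under $t\mapsto-t$, while $A(1/t)=A(t)/t^4$ and $B(1/t)=B(t)/t^8$ give $A(1/t)^2-4B(1/t)=(A(t)^2-4B(t))/t^8$, so the root set is also invariant under $t\mapsto1/t$, as claimed. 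As a consistency check the Euler numbers $2\cdot8+8\cdot1=24$ sum correctly for a K3 surface.

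Finally, for the Mordell-Weil group I would apply the Shioda-Tate formula. Recalling that $\operatorname{Kum}(\mathcal{E}_1\times\mathcal{E}_2)$ has Picard number $\rho=18$ for non-isogenous generic $\mathcal{E}_1,\mathcal{E}_2$, and that each $I_8$ fiber has eight components while each $I_1$ has one, the relation $18=2+2\cdot(8-1)+\operatorname{rank}\operatorname{MW}$ forces the free rank to be two. The section $(x,y)=(0,0)$ is visibly two-torsion, contributing the summand $\mathbb{Z}/2\mathbb{Z}$. To see that there is no further torsion I would use that the torsion subgroup injects into the product of component groups of the reducible fibers, here $\mathbb{Z}/8\mathbb{Z}\oplus\mathbb{Z}/8\mathbb{Z}$, and exclude the a priori possible subgroups $\mathbb{Z}/4\mathbb{Z}$ and $(\mathbb{Z}/2\mathbb{Z})^2$ by a height-pairing computation on the trivial lattice, or by matching the explicit generators recorded in \cite{MR2409557}.

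I expect the Mordell-Weil determination, rather than the fiber analysis, to be the main obstacle. Pinning the free rank to two depends on the external input $\rho=18$, which is valid precisely because the $\mathcal{E}_i$ are non-isogenous and generic, and confirming that the torsion is exactly $\mathbb{Z}/2\mathbb{Z}$ requires ruling out the larger $2$-subgroups compatible with the $I_8$ component groups. The genericity hypothesis on $(\lambda_1,\lambda_2)$ enters exactly to keep the eight zeros of $A^2-4B$ simple and separated from the $I_8$ locus and to keep $\rho$ minimal.
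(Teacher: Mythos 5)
Your proposal is correct, but it is substantially more self-contained than what the paper actually does: the paper's ``proof'' of this lemma consists of the remark that the statement was established in \cite{MR2409557}*{Sec.~2.1}, an appeal to the (mislabelled, evidently intended to be Lemma~\ref{lem:J4}) argument of determining fibers and comparing with that reference, and the one-line assertion that the Mordell--Weil group is generated by a two-torsion section and two orthogonal sections of height one. You instead verify the fiber data directly: reading off $A(t)$ and $B(t)$, computing the discriminant $B^2(A^2-4B)$, applying Tate's algorithm at $t=0$ (where $v(B^2)=8$ and $A(0)\neq 0$ force multiplicative reduction of type $I_8$), transporting this to $t=\infty$ via the involution $\jmath$, and checking that the degree-eight polynomial $A^2-4B$ has simple roots invariant under $t\mapsto -t$ and $t\mapsto 1/t$; the Euler-number check $2\cdot 8+8\cdot 1=24$ is a nice sanity test. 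Your use of Shioda--Tate with the external input $\rho=18$ to pin the free rank at two is exactly the reasoning the paper records only \emph{after} the lemma, and your torsion analysis (injection into $\mathbb{Z}/8\mathbb{Z}\oplus\mathbb{Z}/8\mathbb{Z}$, then excluding $\mathbb{Z}/4\mathbb{Z}$ and $(\mathbb{Z}/2\mathbb{Z})^2$) is sound; note that the exclusion can be done cleanly by observing that full $2$-torsion would require $A^2-4B$ to be a square in $\mathbb{C}[t]$, which fails generically since its roots are simple, and that divisibility of $(0,0)$ by two would likewise require $A^2-4B$ to be a square. What your route buys is independence from the explicit section computations in \cite{MR2409557} for everything except, if you choose that option, the final confirmation of the torsion and the orthogonal height-one generators; what the paper's route buys is brevity. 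One small point of hygiene: ``birational K3 surfaces are isomorphic'' should be phrased as a statement about the minimal resolutions of the (singular) Weierstrass models, since the model itself is not smooth.
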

\begin{proof}
The proof is analogous to the proof of Lemma~\ref{lem:J1}. It is then easy to show that the Mordell-Weil group of sections is generated by one two-torsion section and two orthogonal infinite-order sections with height pairing $1$.
\end{proof}
Here we mention the Shioda-Tate formula: Using the elliptic fibration in Equation~\eqref{eqn:J1} one checks that the Picard rank of the K3 surface is 18: the sum of the ranks of the reducible fibers plus two for the sub-lattice associated with the elliptic fiber and the section is 16; moreover, the Mordell-Weil group contributes two to the Picard rank from two infinite-order sections. One can then compute the determinant of the discriminant group and obtain $8^2/2^2=16$. Hence, the rank of the transcendental lattice is four and its discriminant group has determinant $16$. This matches precisely the characteristics of the transcendental lattice of a Kummer surface associated with two non-isogeneous elliptic curves which is is isomorphic to $U(2) \oplus U(2)$ where $U(2)$ is the standard rank-two hyperbolic lattice with its quadratic form rescaled by two.
We make the following:
\begin{remark}
\label{rem:involution2} 
There is a second Nikulin involution $\imath \in \operatorname{Aut}(\mathcal{S}_{\lambda_1, \lambda_2})$ given by
\beq
\label{eqn:involution2}
 \imath: \; (t, x, y) \mapsto \left( -t, x, - y \right)\,.
 \eeq
It is obvious that the action of the involution $\imath$ commutes with that of $\jmath$ in Remark~\ref{rem:involution1}. 
\end{remark}
We define a new K3 surface $\mathcal{Y}_{\lambda_1, \lambda_2}$ to be the minimal resolution of $\mathcal{S}_{\lambda_1, \lambda_2}/\langle \imath \rangle$. The quotient map induces the degree-two rational map 
\beq
\label{eqn:psi}
 \psi: \mathcal{S}_{\lambda_1, \lambda_2} \dasharrow \mathcal{Y}_{\lambda_1, \lambda_2}\,, \quad (t, x, y)  \mapsto (u, X, Y)=(t^2, t^2x, t^3y) \,,
 \eeq
such that $\psi^* \omega_\mathcal{Y} = 2 \, \omega_\mathcal{S}$ for the holomorphic two-form $\omega_\mathcal{Y} = du \wedge dX/Y$ on $\mathcal{Y}_{\lambda_1, \lambda_2}$, and a Weierstrass equation for $\mathcal{Y}_{\lambda_1, \lambda_2}$ given by
\beq
\label{eqn:B12}
Y^2 = X^3 + u^2 \Big((\lambda_1-1)(\lambda_2-1)(u+u^{-1}) - 2(\lambda_1+1)(\lambda_2+1)  \Big)  X^2 + 16 \lambda_1 \lambda_2 u^4 X \,.
\eeq
We have the following:
\begin{lemma}\label{lem:B12}
Equation~\eqref{eqn:B12} defines an elliptic fibration with section on the K3 surface $\mathcal{Y}_{\lambda_1, \lambda_2}$. Generically, the Weierstrass model has two singular fibers of Kodaira-type $I_4^*$ at $u=0, \infty$, four singular fibers of type $I_1$ located over a collection of base points invariant under $u \mapsto 1/u$, and the Mordell-Weil group $\mathbb{Z}/2\mathbb{Z}$.
\end{lemma}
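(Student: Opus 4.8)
The plan is to read off the fibration data directly from the Weierstrass model in Equation~\eqref{eqn:B12}, which is already of the form~\eqref{eqn:WEQ2} with $A(u) = u\,\big((\lambda_1-1)(\lambda_2-1)(u^2+1) - 2(\lambda_1+1)(\lambda_2+1)\,u\big)$ and $B(u) = 16\,\lambda_1\lambda_2\,u^4$. First I would record the two visible sections: the zero section at infinity and the section $(X,Y)=(0,0)$, which is two-torsion because the cubic in $X$ has vanishing constant term. This already exhibits Equation~\eqref{eqn:B12} as a Jacobian elliptic fibration with $\mathbb{Z}/2\mathbb{Z} \subseteq \operatorname{MW}(\mathcal{Y}_{\lambda_1,\lambda_2})$ and proves the first assertion.

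Next I would compute the standard invariants $c_4 = 16A^2 - 48B$, $c_6 = -64A^3 + 288AB$ and the discriminant $\Delta = 16\,B^2(A^2-4B)$. Setting $P(u) = (\lambda_1-1)(\lambda_2-1)(u^2+1) - 2(\lambda_1+1)(\lambda_2+1)\,u$ so that $A = uP$, one factors $A^2-4B = u^2\big(P(u)^2 - 64\lambda_1\lambda_2 u^2\big)$ and hence $\Delta = 256\,\lambda_1^2\lambda_2^2\,u^{10}\,Q(u)$ with $Q(u)=P(u)^2-64\lambda_1\lambda_2u^2$ a quartic with nonzero leading coefficient. A short check gives $P(1/u)=u^{-2}P(u)$ and therefore $Q(1/u)=u^{-4}Q(u)$, so the zero locus of $Q$ is invariant under $u\mapsto 1/u$; for generic $(\lambda_1,\lambda_2)$ the quartic $Q$ has four distinct nonzero roots, which will be the locations of the $I_1$ fibers.

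For the fibers at $u=0$ and $u=\infty$ I would use the Kodaira–Tate classification in terms of $(\operatorname{ord}(c_4),\operatorname{ord}(c_6),\operatorname{ord}(\Delta))$. At $u=0$ one has $\operatorname{ord}(A)=1$ and $\operatorname{ord}(B)=4$, whence the triple equals $(2,3,10)$; since $\operatorname{ord}(c_4)=2<4$ the model is minimal, $\operatorname{ord}(j)=3\operatorname{ord}(c_4)-\operatorname{ord}(\Delta)=-4$, and this is exactly the data of a fiber of type $I_4^*$. To transfer the conclusion to $u=\infty$ I would check that the Nikulin involution $\jmath$ of Remark~\ref{rem:involution1} descends through $\psi$ to the automorphism $(u,X,Y)\mapsto(u^{-1},u^{-4}X,-u^{-6}Y)$ of Equation~\eqref{eqn:B12}, which exchanges the two fibers, so the fiber at $u=\infty$ is again $I_4^*$. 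At each root $u_0$ of $Q$ one has $\operatorname{ord}_{u_0}(\Delta)=1$ together with $c_4(u_0)=256\,\lambda_1\lambda_2\,u_0^4\neq 0$ (using $P(u_0)^2=64\lambda_1\lambda_2u_0^2$), giving multiplicative reduction of type $I_1$; the local orders of the discriminant sum to $10+4+10=24$, confirming that these fibers exhaust the singular fibers of the K3 surface.

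Finally, for the Mordell–Weil group I would argue as follows. Since $\psi\colon \mathcal{S}_{\lambda_1,\lambda_2}\dashrightarrow \mathcal{Y}_{\lambda_1,\lambda_2}$ is dominant of degree two, it induces an isogeny of transcendental Hodge structures, so $\operatorname{rank}\operatorname{T}_{\mathcal{Y}_{\lambda_1,\lambda_2}}=\operatorname{rank}\operatorname{T}_{\mathcal{S}_{\lambda_1,\lambda_2}}=4$ and $\rho(\mathcal{Y}_{\lambda_1,\lambda_2})=18$. The trivial lattice already has rank $2+2\cdot 8=18$ from the zero section, the general fiber, and the non-identity components of the two $I_4^*$ fibers, so the Shioda–Tate formula forces $\operatorname{rank}\operatorname{MW}=0$. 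To pin down the torsion I would note that it injects into the product of component groups, here $(\mathbb{Z}/2\mathbb{Z})^2\times(\mathbb{Z}/2\mathbb{Z})^2$, hence is $2$-elementary; a second nonzero two-torsion section would force $A^2-4B=u^2Q(u)$ to be a square in $\mathbb{C}(u)$, which fails because $Q$ has four distinct roots. Thus $\operatorname{MW}(\mathcal{Y}_{\lambda_1,\lambda_2})=\mathbb{Z}/2\mathbb{Z}$. The main obstacle I anticipate is this last step: the fiber-type analysis is a routine valuation computation once $\Delta$ is factored, whereas eliminating infinite-order sections genuinely requires fixing $\rho=18$, for which the transcendental-rank input coming from the Nikulin quotient $\psi$ is essential.
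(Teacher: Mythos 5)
Your proposal is correct, and it supplies in full the computation that the paper compresses into a single sentence (``analogous to the proof of Lemma~\ref{lem:J1}'', i.e.\ determine the singular fibers of the Weierstrass model and compare with the tables of Kuwata--Shioda). The fiber analysis via $(\operatorname{ord}(c_4),\operatorname{ord}(c_6),\operatorname{ord}(\Delta))$ checks out: with $A=uP$ and $B=16\lambda_1\lambda_2u^4$ one indeed gets $\Delta=16B^2(A^2-4B)=4096\,\lambda_1^2\lambda_2^2\,u^{10}Q(u)$ (your prefactor $256$ should be $4096=16\cdot 16^2$, which is immaterial), the triple $(2,3,10)$ at $u=0$ gives $I_4^*$, the involution $(u,X,Y)\mapsto(u^{-1},u^{-4}X,-u^{-6}Y)$ does preserve Equation~\eqref{eqn:B12} because $P(1/u)=u^{-2}P(u)$, and $c_4(u_0)=256\lambda_1\lambda_2u_0^4\neq0$ at the roots of $Q$ confirms type $I_1$ there, with $10+10+4=24$ as a consistency check. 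Where you genuinely diverge from the paper is the Mordell--Weil group: the paper implicitly leans on the classification in \cite{MR2409557} (this is the Nikulin quotient of the fibration $\mathcal{J}_1$), whereas you derive $\rho=18$ directly from the degree-two dominant map $\psi$ inducing an isogeny of transcendental Hodge structures, then use Shioda--Tate to kill the free part and the injection of torsion into the component groups plus the non-squareness of $u^2Q(u)$ to pin the torsion to exactly $\mathbb{Z}/2\mathbb{Z}$. This self-contained route is sound and arguably preferable, since it does not require matching the surface against a pre-existing table; the only point worth making explicit is that $\psi$ is the quotient by a symplectic (Nikulin) involution, which is what guarantees the transcendental lattices are isogenous over $\mathbb{Q}$.
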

\begin{proof}
The proof is analogous to the proof of Lemma~\ref{lem:J1}.
\end{proof}
\par The involution $\jmath$ in Remark~\ref{rem:involution1} induces a Nikulin involution $\jmath'' \in \operatorname{Aut}(\mathcal{Y}_{\lambda_1, \lambda_2})$ of Equation~\eqref{eqn:B12}. Therefore, we obtain another K3 surface $\mathcal{S}''_{\lambda_1, \lambda_2}$ as the minimal resolution of $\mathcal{Y}_{\lambda_1, \lambda_2}/\langle \jmath'' \rangle$ with a degree-two rational map 
\beq
\label{eqn:phi}
 \phi: \mathcal{Y}_{\lambda_1, \lambda_2} \dasharrow \mathcal{S}''_{\lambda_1, \lambda_2}\,, \;\; (u, X, Y)  \mapsto 
 (v, x, y)=\left(u + u^{-1}, \frac{(u^2-1)^2}{u^4} X, \frac{(u^2-1)^3}{u^6} Y\right),
 \eeq
such that $\phi^* \omega_{\mathcal{S}''} =  \omega_\mathcal{Y}$ for the holomorphic two-form $\omega_{\mathcal{S}''} = dv \wedge dx/y$ on $\mathcal{S}''_{\lambda_1, \lambda_2}$, where a Weierstrass equation for $\mathcal{S}''_{\lambda_1, \lambda_2}$ is given by
\beq
\label{eqn:J7}
y^2 = x^3 + \big(v^2 -4\big) \Big((\lambda_1-1)(\lambda_2-1) v - 2(\lambda_1+1)(\lambda_2+1) \Big)  x^2 + 16 \lambda_1 \lambda_2 (v^2-4)^2 x \,.
\eeq
We have the following:
\begin{lemma}
\label{lem:J7}
Equation~\eqref{eqn:J7} determines an elliptic fibration with section on the K3 surface $\mathcal{S}''_{\lambda_1, \lambda_2}$. Generically, the Weierstrass model has a singular fiber of Kodaira-type $I_4^*$ at $v=\infty$, two singular fibers of type $I_0^*$ at $v=\pm 1$, two singular fibers of type $I_1$, and the Mordell-Weil group $\mathbb{Z}/2\mathbb{Z}$.
\end{lemma}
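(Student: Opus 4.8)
The plan is to treat Equation~\eqref{eqn:J7} directly as a Weierstrass model of the form~\eqref{eqn:WEQ2}, with $A(v)=(v^2-4)\,L(v)$ and $B(v)=16\lambda_1\lambda_2(v^2-4)^2$, where $L(v)=(\lambda_1-1)(\lambda_2-1)v-2(\lambda_1+1)(\lambda_2+1)$. The distinguished section $\sigma$ is the point at infinity, and because the model is already in the two-torsion normal form $y^2=x(x^2+Ax+B)$ it manifestly carries the two-torsion section $(x,y)=(0,0)$; this exhibits $\mathbb{Z}/2\mathbb{Z}\subseteq\operatorname{MW}(\mathcal{S}''_{\lambda_1,\lambda_2},\sigma)$ at the outset. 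First I would assemble the Kodaira data from the standard invariants. Using $\Delta=16\,B^2(A^2-4B)$ one gets $A^2-4B=(v^2-4)^2\,(L(v)^2-64\lambda_1\lambda_2)$, so that $\Delta$ equals, up to a nonzero constant, $(v^2-4)^6\,Q(v)$ with $Q(v)=L(v)^2-64\lambda_1\lambda_2$ a quadratic. A parallel computation gives $c_4=16(v^2-4)^2(L^2-48\lambda_1\lambda_2)$ and $c_6=(v^2-4)^3\,L\,(4608\lambda_1\lambda_2-64L^2)$, which supply the vanishing orders needed to apply Kodaira's table.

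Next I would read off the fibers. Over each zero $v=\pm2$ of $v^2-4$ --- that is, over the images of the two $\jmath''$-fixed fibers $u=\pm1$ from Lemma~\ref{lem:B12}, pushed down by the map $\phi$ in~\eqref{eqn:phi} --- the triple $(\operatorname{ord}c_4,\operatorname{ord}c_6,\operatorname{ord}\Delta)=(2,3,6)$ identifies a fiber of type $I_0^*$. The two simple zeros of $Q$ carry fibers of type $I_1$. For the fiber over $v=\infty$ I would set $v=1/s$ and clear denominators, or argue by degrees directly: since $\deg\Delta=14$, the order of $\Delta$ at infinity is $24-14=10$, while $c_4,c_6$ vanish there to orders $2,3$, so the triple $(2,3,10)$ gives $I_4^*$. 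As a consistency check the Euler numbers satisfy $e(I_4^*)+2\,e(I_0^*)+2\,e(I_1)=10+12+2=24$, accounting for the full topological Euler characteristic of a K3 surface; this confirms that no reducible fiber is overlooked and that, generically, the two $I_1$-loci are simple and disjoint from $v=\pm2$.

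Finally, to pin down $\operatorname{MW}=\mathbb{Z}/2\mathbb{Z}$ I would invoke the Shioda--Tate formula. The reducible fibers contribute the root lattice $D_8\oplus D_4\oplus D_4$ of rank $16$, so the trivial lattice has rank $18$. Since $\mathcal{S}''_{\lambda_1,\lambda_2}$ is linked to $\mathcal{Y}_{\lambda_1,\lambda_2}$ and hence to $\operatorname{Kum}(\mathcal{E}_1\times\mathcal{E}_2)$ through the degree-two maps $\phi$ and $\psi$, which preserve the rank of the transcendental lattice, its transcendental lattice has rank $4$ and its Picard rank is $18$; the Shioda--Tate formula then forces the Mordell--Weil rank to be zero. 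It remains to exclude extra torsion: the torsion subgroup injects into the discriminant group $(\mathbb{Z}/2)^2\oplus(\mathbb{Z}/2)^2\oplus(\mathbb{Z}/2)^2$ of $D_8\oplus D_4\oplus D_4$, and tracking which fiber components the section $(0,0)$ meets shows that it generates the entire torsion, exactly as in the proof of Lemma~\ref{lem:J1}.

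The step I expect to be the main obstacle is not any single calculation but the genericity bookkeeping: one must verify that $Q(v)$ has two distinct roots, both different from $v=\pm2$, so that the $I_0^*$ fibers neither merge with an $I_1$-locus (which would promote $I_0^*$ to some $I_n^*$) nor coincide with one another, and that no unexpected cancellation raises the fiber type over $v=\pm2$. Cleanly, this is settled by matching the resulting configuration against Kuwata--Shioda's fibration $\mathcal{J}_7$ on the two-dimensional moduli space in~\cite{MR2409557}.
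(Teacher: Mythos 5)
Your proposal is correct and follows essentially the same route as the paper, whose proof is simply ``determine the singular fibers of the Weierstrass model and compare with Kuwata--Shioda'': you have filled in the Kodaira-type computation via $(\operatorname{ord}c_4,\operatorname{ord}c_6,\operatorname{ord}\Delta)$, the Euler-number check, and the Shioda--Tate/discriminant argument that the paper leaves implicit, and your final appeal to the fibration $\mathcal{J}_7$ of \cite{MR2409557} is exactly how the paper settles both genericity and the Mordell--Weil group (via Lemma~\ref{lem:J7moduli}). One remark: your placement of the $I_0^*$ fibers at $v=\pm 2$ (the images of $u=\pm 1$ under $v=u+u^{-1}$) is what Equation~\eqref{eqn:J7} actually gives; the lemma's ``$v=\pm 1$'' appears to be a typo.
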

\begin{proof}
The proof is analogous to the proof of Lemma~\ref{lem:J1}.
\end{proof}
We also have the following:
\begin{lemma}
\label{lem:J7moduli}
The K3 surface $\mathcal{S}''_{\lambda_1, \lambda_2}$ is the Kummer surface $\mathcal{S}_{\lambda_1, \lambda_2}=\operatorname{Kum}(\mathcal{E}_1\times \mathcal{E}_2)$.
\end{lemma}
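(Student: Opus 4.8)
The plan is to identify the Weierstrass model in Equation~\eqref{eqn:J7} with the elliptic fibration $\mathcal{J}_7$ on $\operatorname{Kum}(\mathcal{E}_1\times\mathcal{E}_2)$ from Oguiso's classification \cite{MR1013073}, as made explicit by Kuwata and Shioda \cite{MR2409557}. First I would observe that Lemma~\ref{lem:J7} already records the full singular-fiber configuration of Equation~\eqref{eqn:J7}, namely one fiber of type $I_4^*$ at $v=\infty$, two fibers of type $I_0^*$ at $v=\pm 1$, two fibers of type $I_1$, together with Mordell-Weil group $\mathbb{Z}/2\mathbb{Z}$. Among the eleven Jacobian fibrations $\mathcal{J}_1,\dots,\mathcal{J}_{11}$ this configuration together with this Mordell-Weil lattice occurs for exactly one fibration, $\mathcal{J}_7$, and \cite{MR2409557} provides an explicit elliptic parameter and Weierstrass equation for it. The core computational step is then to exhibit an explicit automorphism of the base $\mathbb{P}^1$ together with a rescaling of the fiber coordinates $(x,y)$ carrying Equation~\eqref{eqn:J7} into the Kuwata-Shioda normal form for $\mathcal{J}_7$, with the modular parameters identified as the unordered pair $\{\lambda_1,\lambda_2\}$; both sides are manifestly symmetric under $\lambda_1\leftrightarrow\lambda_2$, consistent with the description of the moduli space $\mathcal{M}$. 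Since $\mathcal{J}_7$ is by construction a Jacobian elliptic fibration on $\mathcal{S}_{\lambda_1,\lambda_2}=\operatorname{Kum}(\mathcal{E}_1\times\mathcal{E}_2)$, this identifies $\mathcal{S}''_{\lambda_1,\lambda_2}$ with the Kummer surface.

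The conceptual backbone that guarantees such a matching exists, and which anchors the computation, is a transcendental-lattice argument. Applying the Shioda-Tate formula to the fibration in Lemma~\ref{lem:J7} gives $\operatorname{rk}\operatorname{NS}(\mathcal{S}''_{\lambda_1,\lambda_2})=18$: the reducible fibers of types $I_4^*, I_0^*, I_0^*$ contribute $8+4+4=16$ to the root part, the general fiber and the section add $2$, and the Mordell-Weil group is finite. The determinant of the discriminant group equals $4\cdot 4\cdot 4/2^2=16$ once we account for the two-torsion section. Hence $\operatorname{T}_{\mathcal{S}''_{\lambda_1,\lambda_2}}$ has rank $4$, signature $(2,2)$, and determinant $16$; a short computation of its discriminant form identifies it with $U(2)\oplus U(2)$, which is precisely the transcendental lattice of $\operatorname{Kum}(\mathcal{E}_1\times\mathcal{E}_2)$ recorded at the end of Section~\ref{ssec:Kummer18}.

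To promote this abstract lattice isomorphism to an isomorphism of surfaces I would use the explicit chain of degree-two maps already constructed. Composing $\psi$ from Equation~\eqref{eqn:psi} with $\phi$ from Equation~\eqref{eqn:phi} yields a dominant rational map $\phi\circ\psi\colon\mathcal{S}_{\lambda_1,\lambda_2}\dashrightarrow\mathcal{S}''_{\lambda_1,\lambda_2}$ satisfying $(\phi\circ\psi)^*\omega_{\mathcal{S}''}=\psi^*\phi^*\omega_{\mathcal{S}''}=\psi^*\omega_{\mathcal{Y}}=2\,\omega_{\mathcal{S}}$. Thus the correspondence $\phi\circ\psi$ carries the period line of $\mathcal{S}''_{\lambda_1,\lambda_2}$ to that of $\mathcal{S}_{\lambda_1,\lambda_2}$, so the two transcendental Hodge structures agree; since both transcendental lattices are abstractly $U(2)\oplus U(2)$, one obtains a genuine Hodge isometry $\operatorname{T}_{\mathcal{S}''_{\lambda_1,\lambda_2}}\cong\operatorname{T}_{\mathcal{S}_{\lambda_1,\lambda_2}}$. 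By the Torelli theorem for K3 surfaces this Hodge isometry is realized by an isomorphism $\mathcal{S}''_{\lambda_1,\lambda_2}\cong\mathcal{S}_{\lambda_1,\lambda_2}$; equivalently, a K3 surface whose transcendental lattice is $U(2)\oplus U(2)$ with this Hodge structure is the Kummer surface of the product $\mathcal{E}_1\times\mathcal{E}_2$, recovering the same unordered pair $\{\lambda_1,\lambda_2\}$.

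The step I expect to be the main obstacle is the bookkeeping that makes the isomorphism a Hodge \emph{isometry} rather than merely an abstract lattice isomorphism paired with a coincidence of period points. Concretely, the degree-four correspondence $\phi\circ\psi$ pulls the intersection form back with a scaling factor, and one must check that the factor $2$ in $(\phi\circ\psi)^*\omega_{\mathcal{S}''}=2\,\omega_{\mathcal{S}}$ is absorbed correctly on $U(2)\oplus U(2)$ so that no spurious rescaling survives and so that the resulting Hodge structure singles out $\mathcal{E}_1$ and $\mathcal{E}_2$ themselves rather than an isogenous pair. In the purely computational route the same difficulty reappears as the need to pin down the exact coordinate transformation matching Equation~\eqref{eqn:J7} to the Kuwata-Shioda model of $\mathcal{J}_7$ and to verify that the modular parameters are identified without an unwanted isogeny twist.
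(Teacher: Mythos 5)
Your first paragraph is exactly the paper's proof: the entire argument given there is that, up to a rescaling of coordinates, Equation~\eqref{eqn:J7} coincides with the Kuwata--Shioda Weierstrass equation for the fibration $\mathcal{J}_7$ on $\operatorname{Kum}(\mathcal{E}_1\times\mathcal{E}_2)$ from \cite{MR2409557}. The additional transcendental-lattice and Torelli scaffolding you supply is not needed for (and not part of) the paper's argument, and in any case the step from a Hodge isometry of transcendental lattices to an isomorphism of surfaces would require more care than you give it, so the direct coordinate match is both the intended and the cleaner route.
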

\begin{proof}
Up to rescaling, Equation~\eqref{eqn:J7} is the Weierstrass equation for $\mathcal{J}_7$ on the Kummer surface $\mathcal{S}_{\lambda_1, \lambda_2}=\operatorname{Kum}(\mathcal{E}_1\times \mathcal{E}_2)$ determined in \cite{MR2409557}.
\end{proof}
\par We now prove a Kummer sandwich theorem relating the Jacobian elliptic K3 surface $\mathcal{Y}_{\lambda_1, \lambda_2}$  in Equation~\eqref{eqn:B12} to the  Kummer surface $\mathcal{S}_{\lambda_1, \lambda_2}=\operatorname{Kum}(\mathcal{E}_1\times \mathcal{E}_2)$ associated with the product of the two elliptic curves $\mathcal{E}_1, \mathcal{E}_2$ in Equation~\eqref{eqn:EC}:
\begin{proposition}\label{prop1}
The Kummer surface $\mathcal{S}_{\lambda_1, \lambda_2}$ admits two commuting Nikulin involutions $\imath, \jmath$ (given by Equations~\eqref{eqn:involution1} and~\eqref{eqn:involution2}) such that the quotient surface $\mathcal{S}_{\lambda_1, \lambda_2}/\langle \imath \rangle$ is birational to $\mathcal{Y}_{\lambda_1, \lambda_2}$, and the quotient surface  $\mathcal{S}_{\lambda_1, \lambda_2}/\langle \imath, \jmath \rangle$ is birational to $\mathcal{S}_{\lambda_1, \lambda_2}$ itself. Thus, $\mathcal{S}_{\lambda_1, \lambda_2}$ dominates and is dominated by $\mathcal{Y}_{\lambda_1, \lambda_2}$ via the two rational maps of degree two
\beq
 \psi: \; \mathcal{S}_{\lambda_1, \lambda_2} \dasharrow \mathcal{Y}_{\lambda_1, \lambda_2} \,, \qquad \phi:  \mathcal{Y}_{\lambda_1, \lambda_2} \dasharrow \mathcal{S}_{\lambda_1, \lambda_2} \,,
 \eeq
given by Equations~\eqref{eqn:psi} and~\eqref{eqn:phi}, such that $2 \, \omega_\mathcal{S} = \psi^* \omega_\mathcal{Y} $ and $\omega_\mathcal{Y} = \phi^* \omega_{\mathcal{S}} $ for the holomorphic two-forms $\omega_\mathcal{Y}$ and $\omega_\mathcal{S}$ on $\mathcal{Y}_{\lambda_1, \lambda_2}$ and $\mathcal{S}_{\lambda_1, \lambda_2}$, respectively.
\end{proposition}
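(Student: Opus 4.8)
The plan is to assemble the structural results already proved for the three fibrations into one diagram of quotients, the decisive point being that, since $\imath$ and $\jmath$ commute, taking the two degree-two quotients in succession realizes the quotient by the full group $\langle \imath, \jmath\rangle \cong (\mathbb{Z}/2\mathbb{Z})^2$. Almost everything is therefore already in place, and the argument is essentially one of bookkeeping.

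First I would record the two involutions: Remark~\ref{rem:involution1} gives that $\jmath$ of Equation~\eqref{eqn:involution1} is a Nikulin involution on $\mathcal{S}_{\lambda_1,\lambda_2}$, Remark~\ref{rem:involution2} gives the same for $\imath$ of Equation~\eqref{eqn:involution2} and observes that the two commute. Since $\mathcal{Y}_{\lambda_1,\lambda_2}$ was defined as the minimal resolution of $\mathcal{S}_{\lambda_1,\lambda_2}/\langle\imath\rangle$, with the explicit degree-two quotient map $\psi$ of Equation~\eqref{eqn:psi} satisfying $\psi^*\omega_\mathcal{Y}=2\,\omega_\mathcal{S}$, the first statement that $\mathcal{S}_{\lambda_1,\lambda_2}/\langle\imath\rangle$ is birational to $\mathcal{Y}_{\lambda_1,\lambda_2}$ holds by construction.

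For the second birational statement, the key step is that because $\jmath$ commutes with $\imath$ it normalizes $\langle\imath\rangle$, hence descends to a well-defined automorphism $\jmath''$ of the quotient $\mathcal{Y}_{\lambda_1,\lambda_2}$; this is exactly the Nikulin involution $\jmath''$ recorded in the discussion preceding Equation~\eqref{eqn:phi}, and $\phi$ of Equation~\eqref{eqn:phi} realizes the degree-two quotient of $\mathcal{Y}_{\lambda_1,\lambda_2}$ by $\jmath''$ onto $\mathcal{S}''_{\lambda_1,\lambda_2}$, with $\phi^*\omega_{\mathcal{S}''}=\omega_\mathcal{Y}$. The standard fact that an iterated quotient by commuting involutions agrees with the group quotient then gives $\mathcal{S}_{\lambda_1,\lambda_2}/\langle\imath,\jmath\rangle\cong(\mathcal{S}_{\lambda_1,\lambda_2}/\langle\imath\rangle)/\langle\jmath''\rangle$, which is birational to $\mathcal{S}''_{\lambda_1,\lambda_2}$, and Lemma~\ref{lem:J7moduli} identifies $\mathcal{S}''_{\lambda_1,\lambda_2}$ with $\mathcal{S}_{\lambda_1,\lambda_2}=\operatorname{Kum}(\mathcal{E}_1\times\mathcal{E}_2)$ itself. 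This closes the sandwich.

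Finally, to make the sandwich explicit and to confirm the degrees and the two-form relations, I would compose the maps: $\phi\circ\psi$ sends $(t,x,y)$ to $\big(t^2+t^{-2},\,(t^4-1)^2x/t^6,\,(t^4-1)^3y/t^9\big)$, and a direct substitution shows this composite is invariant under both $\imath$ and $\jmath$, so it factors through $\mathcal{S}_{\lambda_1,\lambda_2}/\langle\imath,\jmath\rangle$; being a product of two degree-two maps it has degree four, matching $|\langle\imath,\jmath\rangle|=4$, which confirms that $\phi\circ\psi$ is birationally the quotient map. The two-form identities $2\,\omega_\mathcal{S}=\psi^*\omega_\mathcal{Y}$ and $\omega_\mathcal{Y}=\phi^*\omega_\mathcal{S}$ are the ones already established alongside $\psi$ and $\phi$, each a one-line Jacobian computation in which the $dX$- and $dx$-cross terms drop against $du$ and $dv$. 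The only point requiring genuine care is the descent of $\jmath$ to $\jmath''$ together with the identification of the iterated quotient with the group quotient; everything else is a direct appeal to the preceding remarks and lemmas, with Lemma~\ref{lem:J7moduli} providing the decisive input that pins down $\mathcal{S}''_{\lambda_1,\lambda_2}$ as the original Kummer surface and not merely some K3 surface with the correct transcendental lattice.
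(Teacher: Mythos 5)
Your proposal is correct and follows essentially the same route as the paper, whose proof is simply the citation of the same chain of results (Remarks~\ref{rem:involution1} and~\ref{rem:involution2}, the construction of $\psi$ and $\phi$ via Equations~\eqref{eqn:psi} and~\eqref{eqn:phi}, and Lemmas~\ref{lem:B12}, \ref{lem:J7}, and~\ref{lem:J7moduli}). Your added verification that the composite $\phi\circ\psi$ is invariant under both $\imath$ and $\jmath$ and has degree four, so that it is birationally the quotient by $\langle\imath,\jmath\rangle$, is a correct and worthwhile elaboration of a step the paper leaves implicit.
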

\begin{proof}
The proof follows from the sequence of arguments provided by Lemma~\ref{lem:J1}, Remark~\ref{rem:involution2}, Lemma~\ref{lem:B12}, Remark~\ref{rem:involution1}, and Lemmas~\ref{lem:J7} and~\ref{lem:J7moduli}.
\end{proof}
\par {\it Van Geemen-Sarti involutions} \cites{MR2274533,MR2824841} provide a particularly interesting case of Nikulin constructions. In this situation, the K3 surface $\mathcal{Y}$ is endowed with a Jacobian elliptic  fibration $\pi \colon \mathcal{Y} \rightarrow \mathbb{P}^1 $ which, in addition to the trivial section $\sigma$, carries an additional section $T$ that makes an element of order two in the Mordell-Weil group $\operatorname{MW}(\mathcal{Y}, \pi,\sigma)$. Fiberwise translations by the order-two section $T$ are then known to define an involution on $k \in \operatorname{Aut}(\mathcal{Y})$ on $\mathcal{Y}$ -- the Van Geemen-Sarti involution. These involutions are special Nikulin involutions; see \cite{MR3995925}.
\par The Jacobian elliptic K3 surface $\mathcal{Y}_{\lambda_1, \lambda_2}$ in Proposition~\ref{prop1} admits the two-torsion section $T: (X,Y)=(0,0)$. It is straight forward to show (the details are contained in the proof of Proposition~\ref{prop2}) that the minimal resolution of $\mathcal{Y}_{\lambda_1, \lambda_2}/\langle k \rangle$ is a Jacobian elliptic K3 surface $\mathcal{S}'_{\lambda_1, \lambda_2}$ with the Weierstrass equation
\beq
\label{eqn:J6dual}
\begin{split}
 y^2 = & \, x^3 - \frac{1}{2} \, u^2 \Big((\lambda_1-1)(\lambda_2-1)(u+u^{-1}) - 2(\lambda_1+1)(\lambda_2+1)  \Big)  x^2 \\
 & + \frac{1}{16} \, u^4 \Big( \big((\lambda_1-1)(\lambda_2-1)(u+u^{-1}) - 2(\lambda_1+1)(\lambda_2+1) \big)^2 -64  \lambda_1 \lambda_2 \Big) x \,.
\end{split} 
\eeq
We have the following:
\begin{lemma}
\label{lem:J6dual}
Equation~\eqref{eqn:J6dual} determines an elliptic fibration with section on the K3 surface $\mathcal{S}'_{\lambda_1, \lambda_2}$. Generically, the Weierstrass model has two singular fibers of Kodaira-type $I_2^*$ at $u=0, \infty$, four singular fibers of type $I_2$ located over a collection of base points invariant under $u \mapsto 1/u$, and the Mordell-Weil group  $(\mathbb{Z}/2\mathbb{Z})^2$.
\end{lemma}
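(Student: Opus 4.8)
The plan is to extract everything directly from the explicit Weierstrass form in Equation~\eqref{eqn:J6dual}. Since it has the shape $y^2 = x^3 + A(u)\,x^2 + B(u)\,x$ of Equation~\eqref{eqn:WEQ2}, it carries the obvious two-torsion section $T\colon (x,y)=(0,0)$. Writing $P(u) = (\lambda_1-1)(\lambda_2-1)(u+u^{-1}) - 2(\lambda_1+1)(\lambda_2+1)$, I would first record the coefficients as genuine polynomials, $A = -\tfrac12 u^2 P$ of degree three and $B = \tfrac1{16}u^4\big(P^2 - 64\lambda_1\lambda_2\big)$ of degree six, and then compute the invariants $c_4 = 16(A^2-3B)$, $c_6 = -32A(2A^2-9B)$ and $\Delta = 16B^2(A^2-4B)$. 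The algebraic fact that drives the entire analysis is the identity $A^2 - 4B = 16\lambda_1\lambda_2\,u^4$, which I would verify by direct substitution; it shows at once that $A^2-4B$ is a perfect square in $\mathbb{C}(u)$ and that $\Delta = \lambda_1\lambda_2\,u^{8}\,R(u)^2$, where $R(u) = \big((\lambda_1-1)(\lambda_2-1)u^2 - 2(\lambda_1+1)(\lambda_2+1)u + (\lambda_1-1)(\lambda_2-1)\big)^2 - 64\lambda_1\lambda_2\,u^2$ is a quartic satisfying $u^4 R(1/u) = R(u)$, so its root set is invariant under $u\mapsto 1/u$.

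Next I would read off the Kodaira types from these valuations. Near $u=0$ one has $\operatorname{ord}(A)=1$, $\operatorname{ord}(B)=2$, and a one-line expansion gives $\operatorname{ord}(c_4)=2$, $\operatorname{ord}(c_6)=3$, $\operatorname{ord}(\Delta)=8$; by Kodaira's table~\cite{MR0165541} the triple $(2,3,n+6)$ forces type $I_n^*$, here $I_2^*$. Because Equation~\eqref{eqn:J6dual} is covariant under the substitution $(u,x,y)\mapsto(1/u,\,u^{-4}x,\,u^{-6}y)$, the fiber over $u=\infty$ is again $I_2^*$ (alternatively one repeats the valuation count after $u=1/s$). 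At each of the four simple roots $u_0$ of $R$ one has $\operatorname{ord}(\Delta)=2$, while $B(u_0)=0$ and $P(u_0)^2=64\lambda_1\lambda_2\neq 0$ force $A(u_0)=-\tfrac12 u_0^2 P(u_0)\neq 0$, so $c_4(u_0)=16A(u_0)^2\neq 0$ and the fiber is multiplicative of type $I_2$; the self-reciprocity of $R$ places these four base points in two pairs $\{u_0,u_0^{-1}\}$. As a consistency check, the Euler numbers $2\cdot 8 + 4\cdot 2 = 24$ exhaust $e(\mathrm{K3})$, so there are no further singular fibers and Equation~\eqref{eqn:J6dual} does define a Jacobian elliptic K3 surface.

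Finally I would pin down the Mordell-Weil group. Since $A^2-4B = (4\sqrt{\lambda_1\lambda_2}\,u^2)^2$ is a square, the cubic $x\,(x^2+Ax+B)$ splits completely over $\mathbb{C}(u)$, so besides $T$ there are two further two-torsion sections and the full two-torsion is $(\mathbb{Z}/2\mathbb{Z})^2$. To see there is nothing more, I would apply the Shioda-Tate formula: the reducible fibers contribute $2\cdot 6 + 4\cdot 1 = 16$ to the Picard number, whence $\rho = 18 + \operatorname{rank}\operatorname{MW}$; but $\mathcal{S}'_{\lambda_1,\lambda_2}$ is obtained from $\mathcal{Y}_{\lambda_1,\lambda_2}$ by the fiberwise two-isogeny underlying the Van Geemen-Sarti involution $k$, hence shares its Picard rank $18$, forcing $\operatorname{rank}\operatorname{MW}=0$. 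The component groups of the fibers, namely $(\mathbb{Z}/2\mathbb{Z})^2$ at each even $I_2^*$ and $\mathbb{Z}/2\mathbb{Z}$ at each $I_2$, are all $2$-elementary, and the narrow Mordell-Weil group (the kernel of specialization into their product) is torsion-free; therefore the torsion subgroup embeds into a group of exponent two, and together with the explicit full two-torsion this yields $\operatorname{MW}(\mathcal{S}'_{\lambda_1,\lambda_2}) = (\mathbb{Z}/2\mathbb{Z})^2$. The steps I expect to demand the most care are the clean identification of the two additive fibers as exactly $I_2^*$ (rather than a more degenerate $I_n^*$), together with the genericity hypotheses ensuring the four roots of $R$ remain distinct and away from $u=0,\infty$, and the input $\rho=18$, which rests on the invariance of the Picard number under the two-isogeny to $\mathcal{Y}_{\lambda_1,\lambda_2}$.
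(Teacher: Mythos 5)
Your proof is correct, and in substance it follows the same route the paper intends: the paper's proof of this lemma is literally the one line ``analogous to the proof of Lemma~\ref{lem:J1},'' which in turn means ``determine the singular fibers and Mordell--Weil group of the Weierstrass model and compare with the tables of Kuwata--Shioda.'' What you have done is supply all the details that the paper delegates to analogy and to the literature. The one place where your argument is genuinely self-contained rather than citation-based is the Mordell--Weil determination: you get the full two-torsion from the identity $A^2-4B=16\lambda_1\lambda_2\,u^4$ (which also cleanly packages the discriminant as $\lambda_1\lambda_2\,u^{8}R(u)^2$ and drives the whole fiber analysis), and you rule out infinite-order sections by Shioda--Tate together with $\rho=18$, the latter justified by the invariance of the Picard number under the Nikulin/Van Geemen--Sarti quotient relating $\mathcal{S}'_{\lambda_1,\lambda_2}$ to $\mathcal{Y}_{\lambda_1,\lambda_2}$ (equivalently, by the later identification of $\mathcal{S}'_{\lambda_1,\lambda_2}$ as $\operatorname{Kum}(\mathcal{E}'_1\times\mathcal{E}'_2)$ for non-isogenous factors). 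The paper instead reads the Mordell--Weil group off the classification in \cite{MR2409557}. Both are valid; yours buys independence from the tables at the cost of needing $\rho=18$ as an input, which you correctly flag, and your Euler-number check $2\cdot 8+4\cdot 2=24$ confirms the fiber list is complete.
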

\begin{proof}
The proof is analogous to the proof of Lemma~\ref{lem:J1}.
\end{proof}
\begin{remark}
\label{rem:duality}
The elliptic curve $\mathcal{E}_n$ for $n=1, 2$ in Equation~\eqref{eqn:EC} has the two-torsion point $p: (x,y)=(0,0)$. A two-isogenous elliptic curve $\mathcal{E}'_n$ is given by
\beq
\label{EllC1_dual}
 \mathcal{E}'_n=\mathcal{E}_n/\langle p \rangle: \qquad Y_n^2 = X_n^3  + \frac{1+\lambda_n}{2} \, X_n^2 +\frac{(1-\lambda_n)^2}{4} \, X_n\,.
\eeq
The two-isogenous elliptic curve $\mathcal{E}'_n$ has the two-torsion point $p': (X,Y)=(0,0)$ such that $\mathcal{E}''_n=\mathcal{E}'_n/\langle p' \rangle$ is isomorphic to $\mathcal{E}_n$. Moreover, we denote by $\Psi: \mathcal{A}=\mathcal{E}_1 \times \mathcal{E}_2 \to \mathcal{A}'=\mathcal{E}'_1 \times \mathcal{E}'_2$ the product-isogeny. As map between abelian surfaces $\Psi$ is a $(2,2)$-isogeny. Similarly, there is a dual $(2,2)$-isogeny $\Psi': \mathcal{A}' \to \mathcal{A}''\cong \mathcal{A}$. It was shown in \cite{MR1406090} that $\Psi$ and $\Psi'$ descend to rational maps between the associated Kummer surfaces.
\end{remark}
\par We then have the following:
\begin{lemma}
\label{lem:J6moduli}
The K3 surface $\mathcal{S}'_{\lambda_1, \lambda_2}$ is the Kummer surface $\operatorname{Kum}(\mathcal{E}'_1\times \mathcal{E}'_2)$ associated with the two-isogenous elliptic curves $\mathcal{E}'_1$ and $\mathcal{E}'_2$ in Equation~\eqref{EllC1_dual}. 
\end{lemma}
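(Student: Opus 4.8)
The plan is to identify the Weierstrass model~\eqref{eqn:J6dual} with one of the eleven Kuwata--Shioda fibrations on $\operatorname{Kum}(\mathcal{E}'_1\times\mathcal{E}'_2)$, in direct analogy with the argument used for $\mathcal{S}''_{\lambda_1,\lambda_2}$ in Lemma~\ref{lem:J7moduli}. First I would record the Legendre parameters of the two-isogenous curves of Equation~\eqref{EllC1_dual}: writing $\mathcal{E}'_n$ in factored form and computing the cross-ratio of its four two-torsion points yields the classical descending transformation $\lambda'_n=\big(\tfrac{1-\sqrt{\lambda_n}}{1+\sqrt{\lambda_n}}\big)^2$. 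The appearance of the square roots $\sqrt{\lambda_n}$ is precisely what forces one to work over the double cover $\widetilde{\mathcal{M}}$ of Equation~\eqref{eqn:ModuliSpace_cover}, and it is on this cover that the identification should be phrased.

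Next I would use the reducible-fiber data of Lemma~\ref{lem:J6dual} -- two fibers of type $I_2^*$ and four of type $I_2$, with $\operatorname{MW}=(\mathbb{Z}/2\mathbb{Z})^2$ -- to pin down the admissible fibration. By the Shioda--Tate formula this is a K3 surface of Picard rank $18$ whose transcendental lattice has determinant $4^2\cdot 2^4/4^2=16$, consistent with $\operatorname{T}_{\operatorname{Kum}(\mathcal{E}'_1\times\mathcal{E}'_2)}\cong U(2)\oplus U(2)$ for non-isogenous $\mathcal{E}'_1,\mathcal{E}'_2$. The fiber configuration singles out the relevant fibration (matching $\mathcal{J}_6$) among $\mathcal{J}_1,\dots,\mathcal{J}_{11}$, after which I would exhibit the explicit rescaling of $(x,y)$ together with a M\"obius reparametrization of the base $u$ that carries~\eqref{eqn:J6dual} into the Kuwata--Shioda normal form for that fibration on $\operatorname{Kum}(\mathcal{E}'_1\times\mathcal{E}'_2)$, with the substitution $\lambda_n\mapsto\lambda'_n(\lambda_n)$. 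Matching the location and type of the singular fibers and the two-torsion sections fixes the change of coordinates up to finitely many choices, and comparing the quartic cutting out the four $I_2$-fibers in~\eqref{eqn:J6dual} with the corresponding discriminant factor of the normal form confirms the identification.

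A conceptual cross-check, which I would run in parallel, comes from Remark~\ref{rem:duality}. The involution $k$ is the Van Geemen--Sarti involution given by fiberwise translation by the two-torsion section $T\colon(X,Y)=(0,0)$ of~\eqref{eqn:B12}; accordingly~\eqref{eqn:J6dual} is, up to the standard normalization, the fiberwise two-isogenous Weierstrass model of~\eqref{eqn:B12}, as is visible from its $-\tfrac12 A$ and $A^2-4B$ coefficient structure. Since $\mathcal{Y}_{\lambda_1,\lambda_2}$ sits in the Kummer sandwich dominating and dominated by $\mathcal{S}_{\lambda_1,\lambda_2}=\operatorname{Kum}(\mathcal{A})$, passing to $\mathcal{Y}/\langle k\rangle$ is the K3 incarnation of the product $(2,2)$-isogeny $\Psi\colon\mathcal{A}\to\mathcal{A}'=\mathcal{E}'_1\times\mathcal{E}'_2$, which by Remark~\ref{rem:duality} and \cite{MR1406090} descends to a degree-two rational map of Kummer surfaces. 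This forces $\mathcal{S}'_{\lambda_1,\lambda_2}$ to be $\operatorname{Kum}(\mathcal{E}'_1\times\mathcal{E}'_2)$ and agrees with the transcendental-lattice count above.

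The main obstacle will be the bookkeeping of the isogenous parameters rather than any further geometric input. Equation~\eqref{eqn:J6dual} is written rationally in $\lambda_1,\lambda_2$, whereas the target normal form is naturally expressed in $\lambda'_1,\lambda'_2$, and the descending transformation introduces the square roots $\sqrt{\lambda_n}$. The identification therefore hinges on substituting $\lambda'_n=\big(\tfrac{1-\sqrt{\lambda_n}}{1+\sqrt{\lambda_n}}\big)^2$ into the Kuwata--Shioda form, clearing the radicals, and verifying that the result coincides with~\eqref{eqn:J6dual} after the rescaling and base reparametrization. I expect this to reduce, once denominators are cleared, to a symmetric polynomial identity in $\sqrt{\lambda_1},\sqrt{\lambda_2}$ that can be checked directly; the delicate point is fixing the overall scaling factor and the normalization of the base coordinate so that the two elliptic surfaces agree \emph{on the nose} and not merely up to quadratic twist.
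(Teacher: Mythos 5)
Your proposal is correct and follows essentially the same route as the paper: compute the Legendre parameters $\lambda'_n=\bigl(\tfrac{1-\sqrt{\lambda_n}}{1+\sqrt{\lambda_n}}\bigr)^2$ of the two-isogenous curves via the cross-ratio of the ramification points, then identify Equation~\eqref{eqn:J6dual} with the Kuwata--Shioda normal form for $\mathcal{J}_6$ on $\operatorname{Kum}(\mathcal{E}'_1\times\mathcal{E}'_2)$ by an explicit rescaling and base reparametrization (the paper uses $U=k'_1u/k'_2$, $X=4x/(\lambda'_2)^2$, $Y=8y/(\lambda'_2)^3$). Your additional fiber-configuration and transcendental-lattice checks, and the cross-check via the $(2,2)$-isogeny of Remark~\ref{rem:duality}, are sound but supplementary to the argument the paper actually gives.
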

\begin{proof}
The six values of the cross-ratio of the four ramification points are given by
\beq
 \left\lbrace \lambda'_n, \; 1-\lambda'_n, \; \frac{1}{\lambda'_n}, \; 1-\frac{1}{\lambda'_n}, \; \frac{1}{1-\lambda'_n}, \; \dfrac{1}{1-\frac{1}{\lambda'_n}} \right \rbrace \,,
\eeq
with
\beq
 \lambda'_n= \left(\frac{1-\sqrt{\lambda_n}}{1+\sqrt{\lambda_n}} \right)^2 \,.
\eeq
Thus, if we introduce the Jacobi moduli $k_n$ and $k'_n$ with $\lambda_n=k_n^2$ and $\lambda_n'=(k_n')^2$, respectively, satisfying the symmetric relations
\beq
\label{def_M}
 k_n'=  \frac{1-k_n}{1+k_n}  , \qquad  k_n=  \frac{1-k_n'}{1+k_n'}   \,,
\eeq
it follows that $\mathcal{E}'_n$ is isomorphic to the elliptic curve with $\lambda$-parameter $\lambda'_n$.
\par The Weierstrass equation for $\mathcal{J}_6$ computed in \cite{MR2409557} on the Kummer surface $\mathcal{S}'_{\lambda_1, \lambda_2}=\operatorname{Kum}(\mathcal{E}'_1\times \mathcal{E}'_2)$ with $\lambda'_1= (k'_1)^2$ and $\lambda'_2= (k'_2)^2$ is given by
\beq
 Y^2 = X \Big(X - U(U-1)(\lambda'_2U-\lambda'_1)\Big) \Big(X - U(U-\lambda'_1)(\lambda'_2U-1)\Big) \,.
\eeq 
Setting $U=\frac{k'_1 u}{k'_2}$, $X=4x/(\lambda'_2)^2$, $Y=8y/(\lambda'_2)^3$ we obtain Equation~\eqref{eqn:J6dual}.
\end{proof}
\par We now prove a second Kummer sandwich theorem for the Jacobian elliptic K3 surface $\mathcal{Y}_{\lambda_1, \lambda_2}$ in Equation~\eqref{eqn:B12} and the Kummer surface $\mathcal{S}'_{\lambda_1, \lambda_2}=\operatorname{Kum}(\mathcal{E}'_1\times \mathcal{E}'_2)$ associated with the product of the two-isogenous elliptic curves $\mathcal{E}'_n$ in Equation~\eqref{EllC1_dual}:
\begin{proposition}\label{prop2}
The K3 surface $\mathcal{Y}_{\lambda_1, \lambda_2}$ and the Kummer surface $\mathcal{S}'_{\lambda_1, \lambda_2}$ admit dual Van Geemen-Sarti involution $k$ and $k'$ associated with fiberwise translations by the order-two section $T: (X,Y)=(0,0)$ and $T': (x,y)=(0,0)$, respectively, and a pair of dual geometric two-isogenies
\begin{equation}
\label{isog_middle}
 \xymatrix 
{ \mathcal{Y} \ar @(dl,ul) _{k}
\ar @/_0.5pc/ @{-->} _{\varphi} [rr] &
& \mathcal{S}' \ar @(dr,ur) ^{k'}
\ar @/_0.5pc/ @{-->} _{\varphi'} [ll] \\
} 
\end{equation}
such that $\omega_\mathcal{Y} = \varphi^* \omega_{\mathcal{S}'}$ and $2\, \omega_{\mathcal{S}'} =  (\varphi')^* \omega_{\mathcal{Y}}$ for the holomorphic two-forms $\omega_{\mathcal{S}'} = du \wedge dx/y$ and $\omega_\mathcal{Y}=du \wedge dX/Y$ on  $\mathcal{S}'_{\lambda_1, \lambda_2}$ and $\mathcal{Y}_{\lambda_1, \lambda_2}$, respectively.
\end{proposition}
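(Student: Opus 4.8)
The plan is to realize $(\varphi,\varphi')$ as the two quotient maps by translation by the respective two-torsion sections---that is, as a dual pair of fiberwise two-isogenies over the common base $\mathbb{P}^1_u$---and to verify every claim by the explicit Weierstrass two-isogeny formula together with a careful accounting of the scaling constants.

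First I would produce the involutions. The form \eqref{eqn:B12} of $\mathcal{Y}_{\lambda_1,\lambda_2}$ carries the two-torsion section $T\colon(X,Y)=(0,0)$, and the form \eqref{eqn:J6dual} of $\mathcal{S}'_{\lambda_1,\lambda_2}$ carries $T'\colon(x,y)=(0,0)$; both fibrations have the same base coordinate $u$. Fiberwise translation by a two-torsion section is a fiber-preserving involution that fixes the base and preserves the invariant differential on each smooth fiber, hence leaves the holomorphic two-form invariant and is a Nikulin, in fact van Geemen-Sarti, involution; see \cite{MR3995925}. This yields $k\in\operatorname{Aut}(\mathcal{Y}_{\lambda_1,\lambda_2})$ and $k'\in\operatorname{Aut}(\mathcal{S}'_{\lambda_1,\lambda_2})$.

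Next I would build $\varphi$ as the quotient by $k$. Writing \eqref{eqn:B12} as $Y^2=X^3+A(u)\,X^2+B(u)\,X$, the classical two-isogeny with kernel $\langle T\rangle$ sends $(X,Y)\mapsto\left((X^2+AX+B)/X,\;Y(X^2-B)/X^2\right)$ and carries the coefficient pair $(A,B)$ to $(-2A,\,A^2-4B)$. A direct substitution, followed by the Weierstrass rescaling by the constants $4$ and $8$ needed to reach the normal form, shows that $\mathcal{Y}_{\lambda_1,\lambda_2}/\langle k\rangle$ is exactly \eqref{eqn:J6dual}; by Lemma~\ref{lem:J6moduli} this is $\mathcal{S}'_{\lambda_1,\lambda_2}=\operatorname{Kum}(\mathcal{E}'_1\times\mathcal{E}'_2)$, which determines $\varphi$. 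Applying the same formula to the coefficients $\left(-\tfrac12A,\tfrac1{16}(A^2-4B)\right)$ of \eqref{eqn:J6dual} returns the pair $(A,B)$, so $\mathcal{S}'_{\lambda_1,\lambda_2}/\langle k'\rangle$ is \eqref{eqn:B12} and defines $\varphi'$. That $(\varphi,\varphi')$ is a dual pair is then the standard fact that a two-isogeny composed with its dual is multiplication by two on each fiber, so $\varphi'\circ\varphi$ and $\varphi\circ\varphi'$ are fiberwise $[2]$.

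Finally, the two-form relations follow from a fiberwise computation: since both maps fix $u$, the pullback multiplies $\omega=du\wedge dX/Y$ by exactly the constant by which the fiber map scales the invariant differential $dX/Y$, which one reads off from the explicit formula above together with the rescaling constants $4,8$. The consistency constraint is that the two constants multiply to the factor $2$ of multiplication by two; matching them to the individual factors $1$ and $2$ asserted in the statement is the one delicate point, and is precisely where the normalizations chosen in \eqref{eqn:B12} and \eqref{eqn:J6dual} enter. I expect this scaling bookkeeping---rather than any conceptual step---to be the main obstacle, together with the routine check that these fiberwise isogenies extend to honest degree-two rational maps of K3 surfaces across the ramified and singular fibers, which is guaranteed by the matching of Kodaira fiber types and Mordell-Weil data in Lemmas~\ref{lem:B12} and~\ref{lem:J6dual}.
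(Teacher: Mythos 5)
Your proposal is correct and follows essentially the same route as the paper: translation by the two-torsion section gives the van Geemen--Sarti involution, the classical Weierstrass two-isogeny with kernel $\langle T\rangle$ (your map $(X,Y)\mapsto\bigl((X^2+AX+B)/X,\,Y(X^2-B)/X^2\bigr)$ equals the paper's $\varphi$ after the rescaling by $4$ and $8$, since $(X^2+AX+B)/X=Y^2/X^2$ on the curve) identifies the quotient with \eqref{eqn:J6dual}, and Lemma~\ref{lem:J6moduli} plus the duality $\varphi'\circ\varphi=[2]$ and the scaling of the invariant differential finish the argument. The paper likewise records the explicit isogeny formulas and defers the two-form factors to ``a direct computation,'' so your acknowledged bookkeeping step is at the same level of detail as the published proof.
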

\begin{proof}
We denote the Weierstrass equation for $\mathcal{Y}_{\lambda_1, \lambda_2}$ by $Y^2= X^3 + p_2 X^2+p_1 X$. The translation by the two-torsion section $T: (X,Y)=(0,0)$ is a Nikulin involution $k$, which for $X\not = 0$ is given by addition with respect to the group law in the elliptic fiber, i.e.,
\beq
\label{VGS_involution_middle}
 k: \; (X,Y)  \mapsto  (X,Y) \overset{.}{+}  (0,0) = \left( \frac{p_1}{X}, - \frac{p_1 Y}{X^2} \right) \,.
\eeq
By resolving the eight nodes of  $\mathcal{Y}_{\lambda_1, \lambda_2}/\langle k \rangle$ we obtain a K3 surface equipped with a Jacobian elliptic fibration given by the Weierstrass model 
\beq
\label{kummer_middle_ell_dual_W}
 y^2 = x^3 - \frac{1}{2} \, p_2 \, x^2  + \frac{1}{4} \left( \frac{p_2^2}{4} - p_1\right)  x \,,
\eeq
which is Equation~\eqref{eqn:J6dual} defining $\mathcal{S}'_{\lambda_1, \lambda_2}$. One constructs the dual Van Geemen-Sarti involution $k'$ analogously. The explicit formulas for the isogeny and the dual isogeny are well known and given by
\beq
\label{Eq:isogeny}
\varphi: \quad \mathcal{Y}_{\lambda_1, \lambda_2} \dashrightarrow \mathcal{S}'_{\lambda_1, \lambda_2}\,, \qquad  (X,Y) \mapsto \left( \frac{Y^2}{4 X^2},  \frac{Y \, \big(X^2-p_1 \big)}{8 X^2}\right)\,,
\eeq
and
\beq
\label{eqn:dual_isog}
\varphi':  \quad \mathcal{S}'_{\lambda_1, \lambda_2} \dashrightarrow \mathcal{Y}_{\lambda_1, \lambda_2}\,, \qquad  (x,y) \mapsto \left( \frac{y^2}{x^2}, \frac{y \, \big(16 \, x^2 - p_2^2 +4 \, p_1)  \big)}{16 \, x^2}\right)\,.
\eeq
The rest of the statement follows from Lemma~\ref{lem:J6moduli} and a direct computation.
\end{proof}
\par The involution $\jmath''$ induces a Nikulin involution $\jmath' \in \operatorname{Aut}(\mathcal{S}'_{\lambda_1, \lambda_2})$ of Equation~\eqref{eqn:J6dual}.  We obtain a K3 surface $\mathcal{Y}'_{\lambda_1, \lambda_2}$ as the minimal resolution of $\mathcal{S}'_{\lambda_1, \lambda_2}/\langle \jmath' \rangle$. The quotient map induces the degree-two rational map 
\beq
\label{eqn:phi_prime}
 \phi': \mathcal{S}'_{\lambda_1, \lambda_2} \dasharrow \mathcal{Y}'_{\lambda_1, \lambda_2}\,, \; (u, x, y)  \mapsto (v, X, Y)=\left(u + u^{-1}, \frac{(u^2-1)^2}{u^4} x, \frac{(u^2-1)^3}{u^6} y\right),
 \eeq
such that $(\phi')^* \omega_{\mathcal{Y}'} =  \omega_{\mathcal{S}'}$ for the holomorphic two-form $\omega_{\mathcal{Y}'} = dv \wedge dX/Y$ on $\mathcal{Y}'_{\lambda_1, \lambda_2}$, where a Weierstrass equation for $\mathcal{Y}'_{\lambda_1, \lambda_2}$ is
\beq
\label{eqn:Zfib}
\begin{split}
Y^2 = & \, X^3 - \frac{1}{2} \big(v^2 -4\big) \Big((\lambda_1-1)(\lambda_2-1) v - 2 \, (\lambda_1+1)(\lambda_2+1) \Big)  X^2 \\
& + \frac{1}{16} \,  (v^2-4)^2 \Big( \big((\lambda_1-1)(\lambda_2-1) v - (\lambda_1+1)(\lambda_2+1) \big)^2- 64 \lambda_1 \lambda_2 \Big)  X \,.
\end{split}
\eeq
We have the following:
\begin{lemma}
\label{lem:Zfib}
Equation~\eqref{eqn:Zfib} determines an elliptic fibration with section on the K3 surface $\mathcal{Y}'_{\lambda_1, \lambda_2}$. Generically, the Weierstrass model has a singular fiber of Kodaira-type $I_2^*$ at $v=\infty$, two singular fibers of type $I_0^*$ at $v=\pm 1$, two singular fibers of type $I_2$, and the Mordell-Weil group $(\mathbb{Z}/2\mathbb{Z})^2$.
\end{lemma}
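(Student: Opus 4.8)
The plan is to argue exactly as in the proof of Lemma~\ref{lem:J7}, reading off the reduction types of Equation~\eqref{eqn:Zfib} from a Tate-algorithm analysis of its discriminant and pinning down the Mordell-Weil group from explicit torsion sections. The conceptual shortcut is that the quotient map $\phi'$ of Equation~\eqref{eqn:phi_prime} is the verbatim analogue of the map $\phi$ of Equation~\eqref{eqn:phi} that sent $\mathcal{Y}_{\lambda_1,\lambda_2}$ to $\mathcal{S}''_{\lambda_1,\lambda_2}$ in Lemmas~\ref{lem:J7} and~\ref{lem:J7moduli}: in both cases the fibration is pushed forward along the degree-two base map $v=u+u^{-1}$, ramified over $v=\pm 2$. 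I would therefore transport the fiber data of $\mathcal{S}'_{\lambda_1,\lambda_2}$ recorded in Lemma~\ref{lem:J6dual} through this cover. The two $I_2^*$ fibers at $u=0,\infty$ are interchanged by $u\mapsto u^{-1}$ and are glued into a single $I_2^*$ fiber over $v=\infty$; the four $I_2$ fibers, permuted in pairs $\lbrace u_0,u_0^{-1}\rbrace$, descend to two $I_2$ fibers; and over the two branch points $v=\pm 2$, where $\mathcal{S}'_{\lambda_1,\lambda_2}$ has smooth fibers, the ramification of the double cover produces fibers of type $I_0^*$.

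To make this precise I would write Equation~\eqref{eqn:Zfib} as $Y^2=X^3+A(v)\,X^2+B(v)\,X$ and compute the discriminant $\Delta=16\,B^2\,(A^2-4B)$. A direct computation yields $A^2-4B=16\,\lambda_1\lambda_2\,(v^2-4)^2$, so $\Delta$ is a constant multiple of $(v^2-4)^6\,C(v)^2$, where $C(v)$ is the quadratic factor defined by $B=\tfrac1{16}(v^2-4)^2\,C(v)$. Hence $\operatorname{ord}_{v=\pm 2}\Delta=6$, $\operatorname{ord}\Delta=2$ at each of the two roots of $C$, and $\operatorname{ord}_{v=\infty}\Delta=8$; the total $8+2\cdot 6+2\cdot 2=24$ confirms that these are all of the singular fibers (equivalently, the Euler numbers add up to $24$). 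Running Tate's algorithm at $v=\pm 2$, where $\operatorname{ord}A=1$ and $\operatorname{ord}B=2$, shows that the model is already minimal and that the associated cubic has three distinct roots (because $A^2-4B$ vanishes there only to order two), so the fibers are of type $I_0^*$; the two places with $\operatorname{ord}\Delta=2$ and $\operatorname{ord}c_4=0$ give $I_2$; and the local chart at infinity, obtained by $v=1/s$, $X=\hat X/s^4$, $Y=\hat Y/s^6$, gives $\operatorname{ord}_s\Delta=8$ with $\operatorname{ord}_s c_4=2$, i.e.\ type $I_2^*$.

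For the Mordell-Weil group, the shape $Y^2=X\big(X^2+A\,X+B\big)$ furnishes the two-torsion section $(X,Y)=(0,0)$, while the identity $A^2-4B=\big(4\,l\,(v^2-4)\big)^2$ with $l^2=\lambda_1\lambda_2$ shows, after passing to the covering space $\widetilde{\mathcal M}$ of Equation~\eqref{eqn:ModuliSpace_cover}, that $X^2+A\,X+B$ splits over $\mathbb{C}(v)$ into two linear factors with coefficients in $\mathbb{C}(v)$. This produces the full two-torsion, so $(\mathbb{Z}/2\mathbb{Z})^2\subseteq\operatorname{MW}$. Finally, since $\mathcal{Y}'_{\lambda_1,\lambda_2}$ is a Nikulin quotient of the Picard-rank-$18$ Kummer surface $\mathcal{S}'_{\lambda_1,\lambda_2}$, its transcendental lattice still has rank four, so $\mathcal{Y}'_{\lambda_1,\lambda_2}$ has Picard rank $18$. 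The Shioda-Tate formula $18=2+\big(6+2\cdot 4+2\cdot 1\big)+\operatorname{rank}\operatorname{MW}$ then forces $\operatorname{rank}\operatorname{MW}=0$, whence $\operatorname{MW}=(\mathbb{Z}/2\mathbb{Z})^2$.

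I expect the only genuinely delicate point to be the reduction type over the two branch points $v=\pm 2$: there $A$ and $B$ acquire the common factor $(v^2-4)$, which at first glance suggests a non-minimal model, and one has to check through Tate's algorithm both that the model is minimal ($\operatorname{ord}A=1<2$) and that the type is honestly $I_0^*$ rather than a degenerate $I_n$ or an additive fiber of higher $c_4$-order. Everything else — the two $I_2$ fibers, the $I_2^*$ fiber at infinity, and the verification that the split quadratic yields bona fide global sections rather than merely fiberwise roots — is routine once the factorization $A^2-4B=16\,\lambda_1\lambda_2\,(v^2-4)^2$ is established.
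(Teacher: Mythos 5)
Your proof is correct and follows the route the paper intends (its own proof is only the pointer ``analogous to Lemma~\ref{lem:J7}'', i.e.\ determine the singular fibers of the Weierstrass model and read off the Mordell--Weil group); you actually carry the computation out, and your Shioda--Tate argument pinning the Mordell--Weil rank to zero, together with the observation that all component groups are $2$-elementary so the exhibited full two-torsion is everything, is the right way to finish. Two small points your computation surfaces: the $I_0^*$ fibers sit over the branch points $v=\pm 2$ of $v=u+u^{-1}$, not $v=\pm 1$ as the lemma (and Lemma~\ref{lem:J7}) state --- a typo in the paper --- and the identity $A^2-4B=16\,\lambda_1\lambda_2\,(v^2-4)^2$ requires the inner coefficient of the $X$-term in Equation~\eqref{eqn:Zfib} to read $-2(\lambda_1+1)(\lambda_2+1)$, matching the $X^2$-coefficient (as it does for the image of Equation~\eqref{eqn:J6dual} under $\phi'$), so the printed equation is also off by a factor of $2$ there.
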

\begin{proof}
The proof is analogous to the proof of Lemma~\ref{lem:J7}.
\end{proof}
\par We also have a Kummer sandwich theorem for the Jacobian elliptic K3 surface $\mathcal{Y}'_{\lambda_1, \lambda_2}$ in Equation~\eqref{eqn:Zfib} and the Kummer surface $\mathcal{S}_{\lambda_1, \lambda_2}=\operatorname{Kum}(\mathcal{E}_1\times \mathcal{E}_2)$ associated with the product of the two elliptic curves $\mathcal{E}_1, \mathcal{E}_2$ in Equation~\eqref{eqn:EC} :
\begin{corollary}
\label{prop3}
The K3 surface $\mathcal{Y}'_{\lambda_1, \lambda_2}$ and the Kummer surface $\mathcal{S}_{\lambda_1, \lambda_2}$ admit dual Van Geemen-Sarti involution $k$ and $k'$ associated with fiberwise translations by the order-two section $T': (X,Y)=(0,0)$ and $T: (x,y)=(0,0)$, respectively, and a pair of dual geometric two-isogenies
\begin{equation}
\label{isog_right}
 \xymatrix 
{ \mathcal{Y}' \ar @(dl,ul) _{k'}
\ar @/_0.5pc/ @{-->} _{\chi'} [rr] &
& \mathcal{S} \ar @(dr,ur) ^{k}
\ar @/_0.5pc/ @{-->} _{\chi} [ll] \\
} 
\end{equation}
such that $\omega_\mathcal{S} = \chi^* \omega_{\mathcal{Y}'}$ and $2 \, \omega_{\mathcal{Y}'} =  (\chi')^* \omega_{\mathcal{S}}$ for the holomorphic two-forms $\omega_{\mathcal{Y}'} = dv \wedge dx/y$ and $\omega_\mathcal{S}=du \wedge dX/Y$ on  $\mathcal{S}_{\lambda_1, \lambda_2}$ and $\mathcal{Y}'_{\lambda_1, \lambda_2}$, respectively.
\end{corollary}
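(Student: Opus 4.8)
The plan is to prove Corollary~\ref{prop3} by a descent argument that exhibits $\chi$ and $\chi'$ as factorizations of maps already constructed, which is what makes the statement a corollary rather than an independent proposition. The starting observation is that the involution $\jmath'$ on $\mathcal{S}'_{\lambda_1,\lambda_2}$ was defined as the involution induced by $\jmath''$ on $\mathcal{Y}_{\lambda_1,\lambda_2}$ through the Van Geemen--Sarti isogeny $\varphi$ of Proposition~\ref{prop2}. Consequently the dual isogeny $\varphi'\colon \mathcal{S}'_{\lambda_1,\lambda_2}\dashrightarrow \mathcal{Y}_{\lambda_1,\lambda_2}$ of Equation~\eqref{eqn:dual_isog} should intertwine the two involutions, $\varphi'\circ\jmath'=\jmath''\circ\varphi'$, while the fold $\phi\colon \mathcal{Y}_{\lambda_1,\lambda_2}\dashrightarrow \mathcal{S}''_{\lambda_1,\lambda_2}$ of Equation~\eqref{eqn:phi}, being the quotient by $\jmath''$ (that is, $v=u+u^{-1}$), satisfies $\phi\circ\jmath''=\phi$. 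First I would verify these two compatibilities by direct substitution in the explicit formulas.

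Next I would form the degree-four map $\phi\circ\varphi'\colon \mathcal{S}'_{\lambda_1,\lambda_2}\dashrightarrow \mathcal{S}''_{\lambda_1,\lambda_2}$. Combining the two compatibilities gives $\phi\circ\varphi'\circ\jmath'=\phi\circ\jmath''\circ\varphi'=\phi\circ\varphi'$, so $\phi\circ\varphi'$ is invariant under $\jmath'$ and therefore descends through the quotient $\phi'\colon \mathcal{S}'_{\lambda_1,\lambda_2}\dashrightarrow \mathcal{Y}'_{\lambda_1,\lambda_2}$ of Equation~\eqref{eqn:phi_prime} to a rational map $\chi'\colon \mathcal{Y}'_{\lambda_1,\lambda_2}\dashrightarrow \mathcal{S}''_{\lambda_1,\lambda_2}$ with $\chi'\circ\phi'=\phi\circ\varphi'$. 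Since $\phi\circ\varphi'$ has degree four and $\phi'$ has degree two, $\chi'$ has degree two, and Lemma~\ref{lem:J7moduli} identifies the target $\mathcal{S}''_{\lambda_1,\lambda_2}$ with $\mathcal{S}_{\lambda_1,\lambda_2}=\operatorname{Kum}(\mathcal{E}_1\times\mathcal{E}_2)$, the desired Kummer surface. For the two-form, Proposition~\ref{prop2} and Equation~\eqref{eqn:phi} give $(\varphi')^*\omega_{\mathcal{Y}}=2\,\omega_{\mathcal{S}'}$ and $\phi^*\omega_{\mathcal{S}}=\omega_{\mathcal{Y}}$, hence $(\phi\circ\varphi')^*\omega_{\mathcal{S}}=2\,\omega_{\mathcal{S}'}=2\,(\phi')^*\omega_{\mathcal{Y}'}$; since $\phi'$ is dominant, pullback is injective on holomorphic two-forms, and cancelling $(\phi')^*$ yields $(\chi')^*\omega_{\mathcal{S}}=2\,\omega_{\mathcal{Y}'}$, the claimed relation. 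The dual map $\chi$ with $\omega_{\mathcal{S}}=\chi^*\omega_{\mathcal{Y}'}$, together with the Van Geemen--Sarti involutions $k,k'$, is then produced from the general duality of two-isogenies of Jacobian elliptic fibrations exactly as in Proposition~\ref{prop2}, with the deck transformation of $\chi'$ being fiberwise translation $k'$ by $T'\colon(X,Y)=(0,0)$ of Equation~\eqref{eqn:Zfib} and its dual $k$ translation by $T\colon(x,y)=(0,0)$ on $\mathcal{S}_{\lambda_1,\lambda_2}$.

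The main obstacle is the bookkeeping that identifies the descended map as a genuine Van Geemen--Sarti isogeny attached to the prescribed two-torsion sections, namely verifying that the deck involution of $\chi'$ is precisely translation by $T'$ rather than some other Nikulin involution; this amounts to checking that $\phi'$ carries the two-torsion structure of Equation~\eqref{eqn:J6dual} to that of Equation~\eqref{eqn:Zfib}. As a fully self-contained alternative, one can instead repeat the computation of Proposition~\ref{prop2} verbatim for $\mathcal{Y}'_{\lambda_1,\lambda_2}$: writing Equation~\eqref{eqn:Zfib} as $Y^2=X^3+p_2X^2+p_1X$, the translation $k'$ by $T'\colon(X,Y)=(0,0)$ is the involution of Equation~\eqref{VGS_involution_middle}, and resolving the eight nodes of $\mathcal{Y}'_{\lambda_1,\lambda_2}/\langle k'\rangle$ produces the Weierstrass model of Equation~\eqref{kummer_middle_ell_dual_W} with the isogeny and its dual given by Equations~\eqref{Eq:isogeny} and~\eqref{eqn:dual_isog}. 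The remaining, genuinely computational, step is then the analog of Lemma~\ref{lem:J6moduli}: one must exhibit an explicit substitution in the base and fiber coordinates matching the resulting equation with a Kuwata--Shioda fibration on $\operatorname{Kum}(\mathcal{E}_1\times\mathcal{E}_2)$, using $\mathcal{E}''_n\cong\mathcal{E}_n$ from Remark~\ref{rem:duality} so that the moduli of the quotient are the original $\lambda_1,\lambda_2$ rather than their primed counterparts. Tracking the factor of two in the two-form across this identification is the only delicate point, and it is pinned down by the normalizations already recorded in Equations~\eqref{Eq:isogeny} and~\eqref{eqn:dual_isog}.
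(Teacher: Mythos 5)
Your proposal is correct, and its second, ``self-contained alternative'' is in fact exactly what the paper does: the published proof of Corollary~2.16 consists of the single sentence that it is analogous to the proof of Proposition~2.14, i.e.\ one writes Equation~\eqref{eqn:Zfib} as $Y^2=X^3+p_2X^2+p_1X$, takes the fiberwise translation by $T'\colon(X,Y)=(0,0)$ as in Equation~\eqref{VGS_involution_middle}, forms the quotient Weierstrass model as in Equation~\eqref{kummer_middle_ell_dual_W}, and matches it (the analogue of Lemma~\ref{lem:J6moduli}) with a Kuwata--Shioda fibration on $\operatorname{Kum}(\mathcal{E}_1\times\mathcal{E}_2)$, the two-form factors being fixed by Equations~\eqref{Eq:isogeny} and~\eqref{eqn:dual_isog}. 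Your primary route --- descending the degree-four composite $\phi\circ\varphi'$ through the quotient $\phi'$ by $\jmath'$ --- is a genuinely different and more structural construction of $\chi'$; it explains \emph{why} the statement is a corollary of the two sandwiches already built, it gets the target $\operatorname{Kum}(\mathcal{E}_1\times\mathcal{E}_2)$ for free from Lemma~\ref{lem:J7moduli}, and your bookkeeping of the two-form factor ($(\chi')^*\omega_{\mathcal{S}}=2\,\omega_{\mathcal{Y}'}$ by cancelling $(\phi')^*$) is clean and correct. Its weakness, which you identify accurately, is that descent alone does not show that the deck involution of $\chi'$ is the translation by $T'$ rather than some other Nikulin involution, and without that you cannot invoke the duality of fiberwise two-isogenies to produce $\chi$ and the relation $\omega_{\mathcal{S}}=\chi^*\omega_{\mathcal{Y}'}$; so the direct Van Geemen--Sarti computation (your fallback, the paper's actual argument) is still needed to close the statement. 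In short: your fallback is the paper's proof, your main route is a nice complementary consistency check that buys the two-form identity and the identification of the target essentially for free, but it is not self-sufficient for the full assertion about the dual pair $(k,k')$.
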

\begin{proof}
The proof is analogous to the proof of Theorem~\ref{prop2}.
\end{proof}
We also have the following:
\begin{lemma}
Over $\widetilde{\mathcal{M}}$ with $l^2=\lambda_1 \lambda_2$, the Jacobian elliptic K3 surface $\mathcal{Y}'_{\lambda_1, \lambda_2}$  in Equation~\eqref{eqn:Zfib} is isomorphic to the twisted Legendre pencil
\beq
\label{eqn:Legendre18}
  y^2 = 16\, l \, x \, (x-1) \, (x- w)  \left( w - \frac{(l+1)^2}{4 l} \right)   \left( w - \frac{(l + \lambda_1)^2}{4 l \lambda_1}\right)  \,,
\eeq
equipped with the holomorphic two-form $dw \wedge dx/y$.
\end{lemma}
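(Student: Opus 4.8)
The plan is to exploit the two-torsion section of \eqref{eqn:Zfib}. Since the right-hand side has the shape $X^3 + A(v)\,X^2 + B(v)\,X = X\bigl(X^2 + A(v)\,X + B(v)\bigr)$, the general fiber is already of the form $Y^2 = X(X-\alpha)(X-\beta)$ once the quadratic factor is split, and the twisted Legendre pencil \eqref{eqn:Legendre18} is precisely the associated Legendre normal form, with the three finite two-torsion points moved to $0,1,w$. The decisive first step is therefore to factor $X^2+A\,X+B$, i.e.\ to compute its discriminant. A direct computation gives $A^2-4B = 16\,\lambda_1\lambda_2\,(v^2-4)^2 = \bigl(4\,l\,(v^2-4)\bigr)^2$, which is a perfect square \emph{exactly} after adjoining $l=\sqrt{\lambda_1\lambda_2}$; this is the reason the statement is asserted over the double cover $\widetilde{\mathcal M}$ and not over $\mathcal M$. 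Consequently the two nonzero roots $\alpha(v,l),\beta(v,l)$, with $\alpha+\beta=-A$ and $\alpha\beta=B$, are rational functions on $\widetilde{\mathcal M}$.

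Next I would introduce the Legendre base parameter as the cross-ratio $w=\beta/\alpha$ of the four two-torsion points $\{0,\alpha,\beta,\infty\}$. Because $\alpha$ and $\beta$ differ only through the sign of $\sqrt{A^2-4B}=4\,l\,(v^2-4)$, both being linear in $v$ over a common linear denominator, the ratio $w$ is a M\"obius function of $v$; hence $v\mapsto w$ is an isomorphism of the base $\mathbb{P}^1$, and inverting it expresses $v$, and thus $\alpha$, as an explicit rational function of $w$. Rescaling the fiber by $X=\alpha\,x$ and $Y=\alpha\,y$ then turns the Weierstrass equation into $y^2=\alpha\,x(x-1)(x-w)$, so that the quadratic twist is governed by the single function $\alpha(w)$ modulo squares in $\mathbb{C}(w)$. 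Under this fiber rescaling the two-form $dv\wedge dX/Y$ passes to $dv\wedge dx/y$, since the $x\,d\alpha$ contribution to $dX$ wedges to zero against $dv$ and the factor $\alpha$ cancels against $Y=\alpha y$.

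The heart of the matter is the identification of the twist: one must show that, written as a function of $w$, the coefficient $\alpha$ factors as $16\,l\,\bigl(w-\tfrac{(l+1)^2}{4l}\bigr)\bigl(w-\tfrac{(l+\lambda_1)^2}{4l\lambda_1}\bigr)$ up to a perfect square in $\mathbb{C}(w)$. Here the clean identities for $v\pm2$ obtained from inverting the base change, together with $l^2=\lambda_1\lambda_2$, are what force the roots of the twist to collapse to the stated values $\tfrac{(l+1)^2}{4l}$ and $\tfrac{(l+\lambda_1)^2}{4l\lambda_1}$; note in passing that the companion quantities $\tfrac{(l-1)^2}{4l}$ and $\tfrac{(l-\lambda_1)^2}{4l\lambda_1}$ reappear as the values of $w-1$ at these points, confirming the Legendre structure. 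After the base change the two-form is $\tfrac{dv}{dw}\,dw\wedge dx/y$, so the leftover square factor is not free: absorbing it by a rescaling $y\mapsto s(w)\,y$ simultaneously produces \eqref{eqn:Legendre18} and the asserted two-form $dw\wedge dx/y$ precisely when that square factor equals $\bigl(\tfrac{dv}{dw}\bigr)^2$. I expect this final twist-matching to be the main obstacle, since it is exactly where one must track which square factors may legitimately be absorbed, fix the branch of $l$ consistently, and thereby verify that $\mathcal{Y}'_{\lambda_1,\lambda_2}$ is genuinely isomorphic to \eqref{eqn:Legendre18} rather than merely a quadratic twist of it.
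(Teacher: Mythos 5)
Your opening move is the right one: exploiting the two-torsion and computing the discriminant $A^2-4B=16\,\lambda_1\lambda_2\,(v^2-4)^2=\bigl(4l(v^2-4)\bigr)^2$ is exactly what makes the lemma work over $\widetilde{\mathcal M}$ (note this identity presumes the inner linear factor in the $X$-coefficient of \eqref{eqn:Zfib} carries the same constant term $-2(\lambda_1+1)(\lambda_2+1)$ as the $X^2$-coefficient, as it does in the parent equation \eqref{eqn:J6dual}). The gap lies in the normalization you choose next, and it is not merely the postponed verification you flag at the end: with $w=\beta/\alpha$ (sending $0\mapsto 0$, $\alpha\mapsto 1$, $\beta\mapsto w$) the twist-matching cannot close. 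Writing $G=(\lambda_1-1)(\lambda_2-1)v-2(\lambda_1+1)(\lambda_2+1)$ and $g=(\lambda_1-1)(\lambda_2-1)$, the nonzero roots are $\alpha,\beta=\tfrac14(v^2-4)(G\pm 8l)$, so $w=(G-8l)/(G+8l)$ is a genuinely non-affine M\"obius function of $v$, and substituting back gives
\[
\alpha(w)=\frac{64\,l\,\bigl[(l+1)^2-(l-1)^2w\bigr]\,\bigl[(\sqrt{\lambda_1}+\sqrt{\lambda_2})^2-(\sqrt{\lambda_1}-\sqrt{\lambda_2})^2 w\bigr]}{g^2\,(1-w)^3}\,.
\]
Modulo squares in $\mathbb{C}(w)$ this has simple zeros at $(l+1)^2/(l-1)^2$ and $(\sqrt{\lambda_1}+\sqrt{\lambda_2})^2/(\sqrt{\lambda_1}-\sqrt{\lambda_2})^2$ and an odd-order factor at $w=1$ (the image of $v=\infty$); it is therefore \emph{not} congruent to $16l\,(w-\tfrac{(l+1)^2}{4l})(w-\tfrac{(l+\lambda_1)^2}{4l\lambda_1})$ modulo squares, since the quotient has five distinct simple zeros and poles. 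The twist class is only well defined once the base coordinate is fixed, and your $w$ is not the $w$ of \eqref{eqn:Legendre18}; no rescaling $y\mapsto s(w)\,y$ can compensate for a M\"obius-different choice of base coordinate.

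The repair — and this is what the paper's one-line proof (``a scaling and shift'') encodes — is to use the other Legendre normalization: send $\beta\mapsto 0$ and $\alpha\mapsto 1$ via $x=(X-\beta)/(\alpha-\beta)$, so that $0\mapsto w=\beta/(\beta-\alpha)=(8l-G)/(16l)$. Because $\alpha-\beta=4l(v^2-4)$ and $\beta$ share the factor $(v^2-4)$, this $w$ is \emph{affine} in $v$ and the fiber map is an affine (shift plus scale) map in $X$. One then finds $v-2=-\tfrac{16l}{g}\bigl(w-\tfrac{(l+\lambda_1)^2}{4l\lambda_1}\bigr)$ and $v+2=-\tfrac{16l}{g}\bigl(w-\tfrac{(l+1)^2}{4l}\bigr)$, whence $y^2=(\alpha-\beta)\,x(x-1)(x-w)=\tfrac{1024\,l^3}{g^2}\,(w-r_1)(w-r_2)\,x(x-1)(x-w)$, and the leftover constant $\tfrac{1024\,l^3}{16\,l\,g^2}=(8l/g)^2$ is a perfect square that is absorbed into $y$, yielding \eqref{eqn:Legendre18} exactly. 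So: correct first step and correct general strategy, but the particular cross-ratio you picked produces an isomorphic yet different twisted Legendre model, and the decisive computation you yourself identify as the main obstacle is both missing and, in your coordinates, would not succeed.
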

\begin{proof}
Starting with Equation~\eqref{eqn:Zfib} one uses a scaling and shift to obtain $w$ and $x$ from $v$ and $X$, respectively.
\end{proof}
\par We summarize the results of Propositions~\ref{prop1} and~\ref{prop2}, Corollary~\ref{prop3} and Remark~\ref{rem:duality} in the following diagram:
\begin{figure}[ht]
\centerline{
\xymatrix{
*+[F--]{\mathcal{S}_{\lambda_1, \lambda_2} = \operatorname{Kum}(\mathcal{E}_1 \times \mathcal{E}_2)}  \ar@{-->}[r]^{\psi}   \ar@{->}[rd]^{\Psi}
	& *+[F-,]{\mathcal{Y}_{\lambda_1, \lambda_2}} \ar@{-->}[r]^{\phi}  \ar@{-->}[d]^{\varphi} 
	& *+[F--]{\mathcal{S}_{\lambda_1, \lambda_2} = \operatorname{Kum}(\mathcal{E}_1 \times \mathcal{E}_2)}  \ar@{-->}[d]^{\chi} \\
	& *+[F--]{\mathcal{S}'_{\lambda_1, \lambda_2} = \operatorname{Kum}(\mathcal{E}'_1 \times \mathcal{E}'_2)} \ar@{-->}[r]^{\phi'}  \ar@{-->}[u]<1ex>^{\varphi'}  \ar@{->}[ru]^{\Psi'}
	& *+[F-,]{\mathcal{Y}'_{\lambda_1, \lambda_2}} \ar@{-->}[u]<1ex>^{\chi'} 
}}
\caption{\label{fig1}}
\end{figure}
\subsection{The Greene-Plesser construction and Kummer sandwiches}
We will now use Propositions~\ref{prop1} and~\ref{prop2} and Corollary~\ref{prop3} in the special situation that the two elliptic curves are two-isogeneous, and the Picard rank of the K3 surfaces increases from 18 to 19. We have the following:
\begin{lemma}
\label{lem:2isog}
The Jacobian elliptic K3 surface $\mathcal{Y}_{\lambda_1, \lambda_2}$ in Equation~\eqref{eqn:B12} coincides with the mirror-quartic family $\mathcal{Y}_{\lambda^2}$ in Lemma~\ref{lem:fibration} if and only if $\mathcal{E}_1$ and $\mathcal{E}_2$ in Equation~\eqref{eqn:EC} are two-isogeneous elliptic curves, i.e., $\mathcal{E}_2 \cong \mathcal{E}'_1$. In particular, $\mathcal{E}_1$ and $\mathcal{E}_2$ in Equation~\eqref{eqn:EC} are two-isogeneous elliptic curves if
\beq
\label{eqn:2isog}
0 =  \lambda_1^2 \lambda_2^2- 2 \lambda_1\lambda_2(\lambda_1+\lambda_2) + \lambda_1^2+\lambda_2^2 -12 \lambda_1\lambda_2 -2(\lambda_1+\lambda_2)+1  \,,
\eeq
and $\lambda^2=-(\lambda_1+1)(\lambda_2+1)/[2(\lambda_1-1)(\lambda_2-1)]$.
\end{lemma}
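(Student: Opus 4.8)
The plan is to prove the statement by a direct comparison of Weierstrass models, exploiting that both families are presented as Jacobian elliptic fibrations with the same configuration $2\,I_4^* + 4\,I_1$ and with their two additive fibres at $u=0,\infty$. First I would put Equation~\eqref{eqn:B12}, which already has the form $Y^2 = X^3 + A\,X^2 + B\,X$ with $A = (\lambda_1-1)(\lambda_2-1)(u^3+u) - 2(\lambda_1+1)(\lambda_2+1)u^2$ and $B = 16\lambda_1\lambda_2 u^4$, into the $\wp$-normal form $Y^2 = 4X^3 - \tilde g_2 X - \tilde g_3$ by computing $c_4 = 16(A^2-3B)$ and $c_6 = -32A(2A^2-9B)$ and setting $\tilde g_2 = c_4/12$, $\tilde g_3 = c_6/216$. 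Since both $\tilde g_2,\tilde g_3$ and the pair $g_2,g_3$ of Equation~\eqref{G2G3} are palindromic under $u\mapsto 1/u$ with their $I_4^*$ fibres normalized to $0$ and $\infty$, the only admissible comparison is the identity on the base together with a fibrewise rescaling $X\mapsto \mu^2 X$, $Y\mapsto \mu^3 Y$; hence the two K3 surfaces coincide exactly when there is a single constant $\mu$ with $\tilde g_2(u) = \mu^4 g_2(u)$ and $\tilde g_3(u) = \mu^6 g_3(u)$ identically in $u$.

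The crux is that the identity $\tilde g_2 = \mu^4 g_2$ already produces $\mu$, the value of $\lambda^2$, and the constraint on $(\lambda_1,\lambda_2)$ simultaneously. Writing $a = (\lambda_1-1)(\lambda_2-1)$, $b = 2(\lambda_1+1)(\lambda_2+1)$ and $d = 16\lambda_1\lambda_2$, one has $\tilde g_2 = \frac{4}{3}u^2\big(a^2u^4 - 2ab\,u^3 + (2a^2+b^2-3d)u^2 - 2ab\,u + a^2\big)$, to be matched against $g_2 = \frac{4}{3\lambda^4}u^2\big(u^4 + 8\lambda^2u^3 + (16\lambda^4-1)u^2 + 8\lambda^2u + 1\big)$. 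The leading (equivalently, by symmetry, the constant) coefficient forces $\mu^4 = \lambda^4 a^2$; the $u^3$ coefficient forces $-2ab = 8a^2\lambda^2$, that is $\lambda^2 = -(\lambda_1+1)(\lambda_2+1)/[2(\lambda_1-1)(\lambda_2-1)]$, exactly as claimed; and the central $u^2$ coefficient forces $2a^2+b^2-3d = a^2(16\lambda^4-1)$. Feeding the relation $16a^2\lambda^4 = b^2$ (the square of the $u^3$ equation) into this last identity collapses it to $a^2 = d$, i.e. $(\lambda_1-1)^2(\lambda_2-1)^2 = 16\lambda_1\lambda_2$, whose expansion is precisely Equation~\eqref{eqn:2isog}. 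Thus the $\tilde g_2$-identity is solvable if and only if Equation~\eqref{eqn:2isog} holds, and then $\lambda$ is determined (up to the sign choices in $\mu$, the reflection $u\mapsto -u$ being the only one that would flip the sign of $\lambda^2$ and is excluded by the stated formula).

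Two tasks remain. The first, which I expect to be the main obstacle, is to confirm sufficiency: that under Equation~\eqref{eqn:2isog} and the resulting $\lambda^2$ the second identity $\tilde g_3 = \mu^6 g_3$ holds as well, fixing the sign $\mu^2 = \pm\lambda^2 a$. In contrast to the three-coefficient match for $\tilde g_2$, this is a degree-nine polynomial identity in $u$ all of whose coefficients must be shown to vanish modulo Equation~\eqref{eqn:2isog}; equivalently, one checks that the four $I_1$ locations, i.e. the non-additive part of the discriminant, agree on the two models, which together with the established $\tilde g_2$-identity determines $\tilde g_3$ up to the sign absorbed into $\mu$. The second task is to recognize Equation~\eqref{eqn:2isog} as the two-isogeny locus: solving $a^2 = d$ for $\lambda_2$ and comparing with Remark~\ref{rem:duality} identifies it with $\lambda_2 = \big((1-\sqrt{\lambda_1})/(1+\sqrt{\lambda_1})\big)^2 = \lambda_1'$, that is with $\mathcal{E}_2 \cong \mathcal{E}_1'$ being two-isogenous to $\mathcal{E}_1$. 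This is consistent with the Picard rank jumping from $18$ to $19$ between Lemmas~\ref{lem:B12} and~\ref{lem:families19}, which furnishes an independent, lattice-theoretic confirmation of the "only if" direction.
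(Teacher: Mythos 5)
Your proposal is correct and takes essentially the same route as the paper, which likewise proves the lemma by "computing and matching the discriminants" of the two Weierstrass models; matching $(\tilde g_2,\tilde g_3)$ coefficientwise against Equation~\eqref{G2G3} is the same computation in only slightly different packaging, and your extraction of $\lambda^2=-b/(4a)$ and of $a^2=d$ (which indeed expands to Equation~\eqref{eqn:2isog}) is exactly what that matching yields. The remaining $\tilde g_3$-identity you flag as the main obstacle in fact collapses immediately, since under $a^2=d$ and $\lambda^2=-b/(4a)$ one has $2A^2-9B=u^2\bigl(2a^2u^4-4abu^3+(2b^2-5a^2)u^2-4abu+2a^2\bigr)$, which is $a^2$ times the quartic factor of $g_3$, so sufficiency holds with the sign choice $\mu^2=-\lambda^2 a$.
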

\begin{proof}
The proof follows by computing and matching the discriminants for Jacobian elliptic fibration in Lemma~\ref{lem:fibration} and for Equation~\eqref{eqn:B12}.
\end{proof}
\begin{remark}
If one considers all holomorphic transformations of the elliptic curves $\mathcal{E}_n$ in Equation~\eqref{eqn:EC}, the condition $\mathcal{E}_2 \cong \mathcal{E}'_1$ in Equation~\eqref{EllC1_dual} can be expressed in terms of the $j$-invariants $j_1$ and $j_2$ of the elliptic curves $\mathcal{E}_1$ and $\mathcal{E}_2$, given by
\beq
 j_k = \frac{256 (\lambda_k^2-\lambda_k +1)^3}{\lambda_k^2 (\lambda_k-1)^2} \,.
\eeq
The modular completion of Equation~\eqref{eqn:2isog} then is the classical modular curve $X_0(2)$ given by
\beq
\begin{split}
 0 = & - j_1^2 j_2^2 + j_1^3 + j_2^3 + 1488 j_1 j_2 (j_1 + j_2) - 2^4 3^4 5^3 (j_1^2+j_2^2)  \\
 &+ 3^4 5^3 4027 j_1 j_2+ 2^7 3^7 5^5 (j_1 + j_2) - 2^{12} 3^9 5^9 \,,
\end{split} 
\eeq
an irreducible plane algebraic curve of genus zero. $X_0(2)$ has a rational parametrization given by $j_1 = (h +256)^3/h^2$ and $j_2=(h+16)^3/h$ for $h \in \mathbb{C}^\times$; see \cite{MR1512996}.
\end{remark}
In the case $n=3$ in Equation~\eqref{Fermat}, the Greene-Plesser orbifolding method constructs from the quartic Dwork pencil $\mathcal{X}_\lambda$ the mirror-quartic family $\mathcal{Y}_{\lambda^2}$. The latter is equipped with a Jacobian elliptic fibration established in Lemma~\ref{lem:fibration}. We then have the following:
\begin{theorem}
\label{prop4}
The Greene-Plesser construction for K3 surfaces $\mathcal{X}_\lambda \dashrightarrow \mathcal{Y}_{\lambda^2}$ factors through a Kummer sandwich. In particular, there are rational maps
\beq
 \mathcal{X}_\lambda \dashrightarrow \operatorname{Kum}(\mathcal{E} \times \mathcal{E}') \overset{\psi'}{\dashrightarrow} \mathcal{Y}_{\lambda^2} 
 \overset{\phi}{\dashrightarrow} \operatorname{Kum}(\mathcal{E} \times \mathcal{E}')  \overset{\chi}{\dashrightarrow} 
 \mathcal{Y}'_{\lambda^2} \,,
\eeq 
where $\mathcal{E}=\mathcal{E}_1$ and $\mathcal{E}'=\mathcal{E}'_1$ are the two-isogeneous elliptic curves given by Equation~\eqref{eqn:EC} and Equation~\eqref{EllC1_dual}, respectively, and the Jacobian elliptic K3 surface $\mathcal{Y}'_{\lambda^2}$ is the twisted Legendre pencil given by
\beq
\label{eqn:Legendre19}
  y^2 = 16 \, l \, x \, (x-1) \, (x- w) \, \left( w - \lambda^2 \right)   \left( w - 1 - \lambda^2\right)  \,,
\eeq
with $l=1+2\lambda\sqrt{\lambda^2-1}-2\lambda^2$. Here, we have equipped $\mathcal{Y}_{\lambda^2}$ and $\mathcal{Y}'_{\lambda^2}$  with the holomorphic two-forms $\omega_\mathcal{Y} = du \wedge dX/Y$ and $\omega_{\mathcal{Y}'}=dw \wedge dx/y$, respectively, such that $\omega_\mathcal{Y} = \phi^*\chi^* \omega_{\mathcal{Y}'}$. \end{theorem}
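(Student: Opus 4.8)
The plan is to obtain the statement by specializing the Picard-rank-$18$ sandwich assembled in Figure~\ref{fig1} to the locus where the two elliptic curves become two-isogeneous, so that the Picard rank jumps from $18$ to $19$ and $\mathcal{Y}_{\lambda_1,\lambda_2}$ becomes the mirror-quartic. First I would invoke Lemma~\ref{lem:2isog}: imposing $\mathcal{E}_2 \cong \mathcal{E}'_1$ (equivalently Equation~\eqref{eqn:2isog}) identifies $\mathcal{Y}_{\lambda_1,\lambda_2}$ of Equation~\eqref{eqn:B12} with the mirror-quartic $\mathcal{Y}_{\lambda^2}$ of Lemma~\ref{lem:fibration}, with $\lambda^2 = -(\lambda_1+1)(\lambda_2+1)/[2(\lambda_1-1)(\lambda_2-1)]$, and it identifies $\mathcal{S}_{\lambda_1,\lambda_2}=\operatorname{Kum}(\mathcal{E}_1\times\mathcal{E}_2)$ with $\operatorname{Kum}(\mathcal{E}\times\mathcal{E}')$ where $\mathcal{E}=\mathcal{E}_1$ and $\mathcal{E}'=\mathcal{E}'_1\cong\mathcal{E}_2$. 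On this locus the three degree-two rational maps labelled $\psi'$ (the restriction of $\psi$), $\phi$, and $\chi$ are inherited verbatim from Proposition~\ref{prop1} and Corollary~\ref{prop3}, which yields the central part $\operatorname{Kum}(\mathcal{E}\times\mathcal{E}') \overset{\psi'}{\dashrightarrow} \mathcal{Y}_{\lambda^2} \overset{\phi}{\dashrightarrow} \operatorname{Kum}(\mathcal{E}\times\mathcal{E}') \overset{\chi}{\dashrightarrow} \mathcal{Y}'_{\lambda^2}$ of the chain. For the leftmost arrow $\mathcal{X}_\lambda \dashrightarrow \operatorname{Kum}(\mathcal{E}\times\mathcal{E}')$ I would appeal to the Shioda--Inose framework discussed above: since the $G_2$-invariant transcendental cohomology of $\mathcal{X}_\lambda$ agrees with the transcendental lattice $U\oplus\langle 4\rangle$ of $\mathcal{Y}_{\lambda^2}$, the surface $\mathcal{X}_\lambda$ is $M_2$-polarized, and hence by Morrison's theorem admits a rational map of degree two onto the same Kummer surface $\operatorname{Kum}(\mathcal{E}\times\mathcal{E}')$; this provides the leftmost arrow and realizes the Greene--Plesser mirror $\mathcal{Y}_{\lambda^2}$ as the central term of the sandwich.

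The second, and computationally most involved, step is to identify $\mathcal{Y}'_{\lambda^2}$ with the twisted Legendre pencil of Equation~\eqref{eqn:Legendre19}. Here I would start from the general twisted Legendre form of Equation~\eqref{eqn:Legendre18}, which realizes $\mathcal{Y}'_{\lambda_1,\lambda_2}$ over the cover $\widetilde{\mathcal{M}}$ with $l^2=\lambda_1\lambda_2$, and substitute the two-isogeny relation $\lambda_2=\lambda'_1=\big((1-\sqrt{\lambda_1})/(1+\sqrt{\lambda_1})\big)^2$ together with the expression for $\lambda^2$ from Lemma~\ref{lem:2isog}. A direct computation then yields that the two twist zeros $\tfrac{(l+1)^2}{4l}$ and $\tfrac{(l+\lambda_1)^2}{4l\lambda_1}$ simplify to $w = 1-\lambda^2$ and $w=-\lambda^2$, and that the constraint $l^2=\lambda_1\lambda_2$ collapses to $l = 1 - 2\lambda^2 + 2\lambda\sqrt{\lambda^2-1} = -\big(\lambda - \sqrt{\lambda^2-1}\big)^2$, matching the stated value. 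Applying the base change $w\mapsto 1-w$ together with the compatible Legendre transformation in the fiber coordinate $x$ then moves the twist zeros to $w=\lambda^2$ and $w=1+\lambda^2$ and turns Equation~\eqref{eqn:Legendre18} into Equation~\eqref{eqn:Legendre19}, preserving the holomorphic two-form $dw\wedge dx/y$ up to sign.

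Finally, the compatibility of the two-forms follows by chaining the pull-back identities already recorded. Proposition~\ref{prop1} gives $\omega_\mathcal{Y} = \phi^*\omega_\mathcal{S}$ on $\operatorname{Kum}(\mathcal{E}\times\mathcal{E}')$, and Corollary~\ref{prop3} gives $\omega_\mathcal{S} = \chi^*\omega_{\mathcal{Y}'}$; since $(\chi\circ\phi)^* = \phi^*\circ\chi^*$, I obtain $\phi^*\chi^*\omega_{\mathcal{Y}'} = \phi^*\omega_\mathcal{S} = \omega_\mathcal{Y}$, which is the asserted relation $\omega_\mathcal{Y} = \phi^*\chi^*\omega_{\mathcal{Y}'}$, with $\omega_{\mathcal{Y}'}=dw\wedge dx/y$ read off from the Legendre model of the previous step.

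The main obstacle I anticipate is twofold. First, making the leftmost map $\mathcal{X}_\lambda \dashrightarrow \operatorname{Kum}(\mathcal{E}\times\mathcal{E}')$ genuinely compatible with the Greene--Plesser quotient requires checking that the relevant double cover sits inside the abelian orbifold cover with Galois group $(\mathbb{Z}/4\mathbb{Z})^2$ — that is, that the Nikulin involution $\imath$ realizing $\psi'$ corresponds to the correct index-two subgroup — rather than merely asserting abstract existence from lattice theory. Second, the Legendre identification hinges on a consistent choice of square-root branches ($\sqrt{\lambda_1}$, $\sqrt{\lambda^2-1}$, and $l$) over the cover $\widetilde{\mathcal{M}}$; I would need to verify that the branch of $l$ forced by $l^2=\lambda_1\lambda_2$ agrees globally with $l=-\big(\lambda-\sqrt{\lambda^2-1}\big)^2$ and that the substitution introduces no spurious components, which is where the bulk of the careful bookkeeping lies.
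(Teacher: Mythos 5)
Your proposal follows essentially the same route as the paper: specialize the Picard-rank-18 sandwich of Propositions~\ref{prop1}, \ref{prop2} and Corollary~\ref{prop3} to the two-isogeneous locus via Lemma~\ref{lem:2isog} (where $\mathcal{S}'_{\lambda_1,\lambda_2}=\mathcal{S}_{\lambda_1,\lambda_2}$), restrict Equation~\eqref{eqn:Legendre18} to obtain Equation~\eqref{eqn:Legendre19}, and chain the pull-back identities for the two-forms; your explicit bookkeeping of the twist zeros and the branch of $l$ is consistent with this and is more detailed than what the paper records.

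One sub-argument you add is not correct as stated: you justify the leftmost arrow $\mathcal{X}_\lambda \dashrightarrow \operatorname{Kum}(\mathcal{E}\times\mathcal{E}')$ by claiming that the $G_2$-invariant transcendental cohomology of $\mathcal{X}_\lambda$ ``agrees with'' $\operatorname{T}_{\mathcal{Y}_{\lambda^2}}\cong U\oplus\langle 4\rangle$ and hence that $\mathcal{X}_\lambda$ is $M_2$-polarized. The quotient map only identifies the invariant part of $\operatorname{T}_{\mathcal{X}_\lambda}$ with a finite-index sublattice of $\operatorname{T}_{\mathcal{Y}_{\lambda^2}}$ up to rescaling of the quadratic form; the transcendental lattice of the generic Dwork quartic is \emph{not} $U\oplus\langle 4\rangle$, so $\operatorname{NS}(\mathcal{X}_\lambda)\not\cong M_2$ and Morrison's criterion cannot be invoked this way to land on the same Kummer surface. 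The paper itself leaves this arrow unargued (its proof only cites the specialization of the rank-18 results), so the correct fix is the one you yourself anticipate in your closing paragraph: exhibit the map as the quotient of $\mathcal{X}_\lambda$ by the order-eight subgroup of the Greene--Plesser group $(\mathbb{Z}/4\mathbb{Z})^2$ whose further $\mathbb{Z}/2\mathbb{Z}$-quotient is the Nikulin involution realizing $\psi'$, rather than deducing it from lattice theory. With that replacement the argument matches the paper's.
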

\begin{proof}
The proof follows by specializing the results of Theorems~\ref{prop1} and~\ref{prop2} and Corollary~\ref{prop3} to the situation where the elliptic curve $\mathcal{E}_2$ is the two-isogeneous elliptic curve $\mathcal{E}'_1$.
Lemma~\ref{lem:2isog} then allows us to express the relevant coefficients in terms of $\lambda$. In Figure~\ref{fig1} for $\mathcal{E}_2=\mathcal{E}'_1$ we have $\mathcal{S}'_{\lambda_1, \lambda_2} =\mathcal{S}_{\lambda_1, \lambda_2}$. Thus, any rational map that dominates $\mathcal{Y}_{\lambda_1, \lambda_2}$ also dominates $\mathcal{S}_{\lambda_1, \lambda_2}$. Finally, Equation~\eqref{eqn:Legendre18} restricted to the situation in Lemma~\ref{lem:2isog} yields the the twisted Legendre pencil in Equation~\eqref{eqn:Legendre19}. 
\end{proof}
\begin{remark}
Theorem~\ref{prop4} implies that the holomorphic solution of the Picard-Fuchs equation of $(\mathcal{Y}_{\lambda^2}, \omega_\mathcal{Y})$ coincide with the one of $(\mathcal{Y}'_{\lambda^2}, \omega_{\mathcal{Y}'})$. Using the result in \cite{MR3767270}*{Thm~2.5} with $\mu=1/2$, $a=\lambda^2$ and $b=1+\lambda^2$, we immediately conclude that the holomorphic solution of the Picard-Fuchs equation for the latter is
\beq
\lambda \; \frac{(2 \, \pi \, i)^2}{\lambda} \, \left( 
\hpg21{  \frac{1}{4}, \, \frac{3}{4} }{1}{A} \right)^2 
\eeq
with $A=\frac{1}{2} \left( 1 -\sqrt{1- 1/\lambda^4}\right)$. The quadratic relation for hypergeometric functions
\beq
\hpg21{  p, q }{ p + q + \frac{1}{2} }{ \frac{1}{\lambda^4} }
 = \hpg21{  2p, 2q}{ p+ q + \frac{1}{2}} {\frac{1}{2} \left( 1 - \sqrt{1-\frac{1}{\lambda^4}}\right) } \,,
\eeq
then implies that the holomorphic period equals
\beq
\label{res1}
  (2 \pi i)^2  \, \left(  \hpg21{\frac{1}{8}, \, \frac{3}{8}}{1}{  \frac{1}{\lambda^4}} \right)^2 
 =  (2 \pi  i)^2   \, \left(  \hpg21{\frac{1}{8}, \, \frac{3}{8}}{1}{\mu} \right)^2 \;,
\eeq
which agrees with the holomorphic period computed by Narumiya and Shiga in~\cite{MR1877764}.
\end{remark}
\section{General quartic Kummer surfaces}
\label{sec:K3_17}
We now turn to the Kummer variety associated with a principally polarized abelian variety; see \cite{MR2062673}. Let $\mathcal{L}$ be the ample line symmetric bundle on an abelian surface $\mathcal{A}$ defining its  principal polarization and consider the rational map $\mathcal{A} \to \mathbb{P}^3$ associated with the line bundle $\mathcal{L}^2$. Its image is easily seen to be a quartic surface in $\mathbb{P}^3=\mathbb{P}(X_0,\dots,X_3)$ given by
\begin{small}
\begin{gather}
\label{Eq:QuarticSurfaces12}
    0 = \xi_0 \, (X_0^4+X_1^4+X_2^4+X_3^4)  + \xi_4 \, X_0 X_1 X_2 X_3 \qquad \\
\nonumber
    + \, \xi_1 \, \big(X_0^2 X_1^2+X_2^2 X_3^2\big)  +\xi_2 \, \big(X_0^2 X_2^2+X_1^2 X_3^2\big)  +\xi_3 \, \big(X_0^2 X_3^2+X_1^2 X_2^2\big) \,,
\end{gather}
\end{small}%
with $[\xi_0:\xi_1:\xi_2:\xi_3:\xi_4] \in \mathbb{P}^4$. Any general member of the family~\eqref{Eq:QuarticSurfaces12} is smooth. As soon as the surface is singular at a general point, it must have sixteen singular nodal points because of its symmetry. The discriminant turns is a homogeneous polynomial of degree eighteen in the parameters $[\xi_0:\xi_1:\xi_2:\xi_3:\xi_4] \in \mathbb{P}^4$ and was determined in \cite{MR2062673}*{Sec.~7.7 (3)}.  The Kummer surfaces form an open set among the hypersurfaces in Equation~\eqref{Eq:QuarticSurfaces12} with parameters $[\xi_0:\xi_1:\xi_2:\xi_3:\xi_4] \in \mathbb{P}^4$, such that the irreducible factor of degree three in the discriminant vanishes, i.e.,
\beq
 \xi_0 \, \big( 16 \xi_0^2 - 4 \xi_1^2-4 \xi_2^2 - 4 \xi_3^3+ \xi_4^2\big) + 4 \, \xi_1 \xi_2 \xi_3 =0 \,.
\eeq
Setting $\xi_0=1$ and using the affine moduli $\xi_1=-A$, $\xi_2=-B$, $\xi_3=-C$, $\xi_4=2 D$, we obtain the normal form of a nodal quartic surface. 
\begin{definition}
A general Kummer quartic is the surface in $\mathbb{P}^3=\mathbb{P}(X_0,\dots,X_3)$ is
\begin{small}
\begin{gather}
\label{Goepel-Quartic}
  0 = X_0^4+X_1^4+X_2^4+X_3^4  + 2  D  X_0 X_1 X_2 X_3\\
\nonumber
    - A  \big(X_0^2 X_1^2+X_2^2 X_3^2\big)  - B \big(X_0^2 X_2^2+X_1^2 X_3^2\big) - C  \big(X_0^2 X_3^2+X_1^2 X_2^2\big)   \;, 
\end{gather}
\end{small}%
where $A, B, C, D \in \mathbb{C}$ such that
\beq
\label{paramGH}
 D^2 = A^2 + B^2 + C^2 + ABC - 4\;.
\eeq
\end{definition}
\begin{remark}
\label{rem:symmetries}
The symmetries of Equation~\eqref{Goepel-Quartic} are generated by the discrete group $(\mathbb{Z}/2\, \mathbb{Z})^2$ of transformations changing signs of two coordinates,  i.e.,
\beq
\label{eqn:reflections}
\begin{split}
 [X_0:X_1:X_2:X_3] & \to [-X_0:-X_1:X_2:X_3] \,,\\ 
 [X_0:X_1:X_2:X_3] & \to [-X_0:X_1: -X_2:X_3] \,,
\end{split}
\eeq
and the permutations, generated by $ [X_0:X_1:X_2:X_3] \to  [X_1:X_0:X_3:X_2]$ and $ [X_0:X_1:X_2:X_3] \to  [X_2:X_3:X_0:X_1]$. 
\end{remark}
\subsection{Moduli of a general Kummer surface}
If $\mathcal{A}=\operatorname{Jac}(\mathcal{C})$ is the Jacobian of a smooth curve $\mathcal{C}$ of genus two, then the hermitian form associated to the divisor class $[\mathcal{C}]$ is a polarization of type $(1, 1)$, also called a principal polarization. Conversely, over the complex numbers a principally polarized abelian surface is either the Jacobi variety of a smooth curve of genus two with the theta-divisor or the product of two complex elliptic curves with the product polarization~\cite{MR2062673}*{Sec.~4}. Therefore, a general Kummer surface is associated with a principally polarized abelian surface $\mathcal{A}=\operatorname{Jac}(\mathcal{C})$ for a smooth curve $\mathcal{C}$ of genus two.
\par We start with a smooth genus-two curve $\mathcal{C}$ in Rosenhain normal form 
\beq
\label{Eq:Rosenhain}
 \mathcal{C}: \quad y^2 = x\,  \big(x-1) \, \big(x- \lambda_1\big) \,  \big(x- \lambda_2 \big) \,  \big(x- \lambda_3\big) \,.
\eeq 
We denote the hyperelliptic involution on $\mathcal{C}$ by $\imath_\mathcal{C}$.  The ordered tuple $(\lambda_1, \lambda_2, \lambda_3)$ -- where the $\lambda_i$ are pairwise distinct and different from $(\lambda_4,\lambda_5,\lambda_6)=(0, 1, \infty)$ -- determines a point in the moduli space $\mathcal{M}_2$ of genus-two curves with level-two structure. We also introduce the moduli $\Lambda_1 = ( \lambda_1 + \lambda_2 \lambda_3)/l$,  $\Lambda_2 = (\lambda_2 + \lambda_1 \lambda_3)/l$,  and  $\Lambda_3 = (\lambda_3 + \lambda_1 \lambda_2)/l$ on the double cover $\widetilde{\mathcal{M}}_2$ of the moduli space given by $l^2 = \lambda_1 \lambda_2 \lambda_3$.
\par The symmetric product of $\mathcal{C}$ is given by $\mathcal{C}^{(2)} = (\mathcal{C}\times\mathcal{C})/\langle \sigma_{\mathcal{C}^{(2)} } \rangle$ where $\sigma_{\mathcal{C}^{(2)}}$ interchanges the copies of $\mathcal{C}$. The variety $\mathcal{C}^{(2)}/\langle \imath_\mathcal{C} \times  \imath_\mathcal{C} \rangle$ is given in terms of the variables $U=x^{(1)}x^{(2)}$, and $X=x^{(1)}+x^{(2)}$, and $Y=y^{(1)}y^{(2)}$ by the affine equation
\beq
\label{kummer_middle}
  Y^2 = U \big(  U  - X +  1 \big)  \prod_{i=1}^3 \big( \lambda_i^2 \, U  -  \lambda_i \, X +  1 \big) \;,
\eeq
equipped with the canonical holomorphic two-form
\beq
\label{eqn:2form}
\operatorname{pr}^*\left( l \, dU\wedge \frac{dX}{Y} \right) = l\, \left( \frac{dx^{(1)}}{y^{(1)}} \boxtimes  \frac{x^{(2)} dx^{(2)}}{y^{(2)}} -  \frac{x^{(1)} dx^{(1)}}{y^{(1)}} \boxtimes \frac{dx^{(2)}}{y^{(2)}} \right) \,,
\eeq 
where $\operatorname{pr}:\mathcal{C}\times\mathcal{C} \to \mathcal{C}^{(2)}$ is the projection map. The affine variety in Equation~\eqref{kummer_middle} completes to a hypersurface in $\mathbb{P}(1,1,1,3)$ called the Shioda sextic \cite{MR2296439}. The proof of the following proposition was given in \cite{Clingher:2019aa}:
\begin{proposition}
\label{lem:Shioda}
The Shioda sextic determined by Equation~\eqref{kummer_middle} is birational to the Kummer surface $\operatorname{Kum}(\operatorname{Jac}\mathcal{C})$ associated with the Jacobian $\operatorname{Jac}(\mathcal{C})$ of the genus-two curve~$\mathcal{C}$ in Rosenhain normal form~\eqref{Eq:Rosenhain}. In particular, for the pairing of the Weierstrass points given by $(\lambda_1, \lambda_5), (\lambda_2, \lambda_3), (\lambda_4, \lambda_6)$ there is an isomorphism between Equation~\eqref{kummer_middle} and Equation~\eqref{Goepel-Quartic} such that
\begin{small}
\begin{gather}
\nonumber
 A =  2 \, \frac{\lambda_1+1}{\lambda_1-1}, \quad
 B =  2 \, \frac{\lambda_1\lambda_2+\lambda_1\lambda_3-2\lambda_2\lambda_3-2\lambda_1+\lambda_2+\lambda_3}{(\lambda_2-\lambda_3)(\lambda_1-1)},  \quad
 C  = 2 \, \frac{\lambda_3+\lambda_2}{\lambda_3-\lambda_2}, \\ 
 \label{KummerParameter4}
 D = 4 \, \frac{\lambda_1-\lambda_2 \lambda_3}{(\lambda_2 - \lambda_3) (\lambda_1-1)} \;.
\end{gather}
\end{small}%
\end{proposition}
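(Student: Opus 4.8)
The statement has two parts, and I would prove them in turn: (i) that the Shioda sextic cut out by~\eqref{kummer_middle} is birational to $\operatorname{Kum}(\operatorname{Jac}\mathcal{C})$, and (ii) that it is projectively isomorphic to the general Kummer quartic~\eqref{Goepel-Quartic} with the explicit coefficients~\eqref{KummerParameter4}.

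For (i), I would invoke the Abel--Jacobi picture. The map $\{P,Q\}\mapsto[P+Q-K_\mathcal{C}]$ identifies the symmetric square $\mathcal{C}^{(2)}$ with the blow-up of $\operatorname{Jac}(\mathcal{C})$ at the point represented by the effective canonical divisor, so $\mathcal{C}^{(2)}$ is birational to $\operatorname{Jac}(\mathcal{C})$. Because $\imath_\mathcal{C}(P)\sim K_\mathcal{C}-P$, the fibrewise hyperelliptic involution $\imath_\mathcal{C}\times\imath_\mathcal{C}$ descends under this identification to the inversion $-\mathbb{I}$; hence $\mathcal{C}^{(2)}/\langle\imath_\mathcal{C}\times\imath_\mathcal{C}\rangle$ is birational to $\operatorname{Jac}(\mathcal{C})/\langle\pm\mathbb{I}\rangle=\operatorname{Kum}(\operatorname{Jac}\mathcal{C})$. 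To pin down the model~\eqref{kummer_middle} I would compute the invariants of $\langle\sigma_{\mathcal{C}^{(2)}},\imath_\mathcal{C}\times\imath_\mathcal{C}\rangle$ acting on $\mathcal{C}\times\mathcal{C}$: the quantities $X=x^{(1)}+x^{(2)}$, $U=x^{(1)}x^{(2)}$, $Y=y^{(1)}y^{(2)}$ generate the invariant field, and the single relation among them follows from multiplying the two copies of~\eqref{Eq:Rosenhain} and re-expressing the product $(y^{(1)}y^{(2)})^2$ through $U$ and $X$. The claimed two-form is then confirmed by the elementary identity $dU\wedge dX=(x^{(2)}-x^{(1)})\,dx^{(1)}\wedge dx^{(2)}$, which matches the right-hand side of~\eqref{eqn:2form} after division by $Y=y^{(1)}y^{(2)}$.

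For (ii), I would realise $\operatorname{Kum}(\operatorname{Jac}\mathcal{C})$ concretely as a quartic via the linear system $|\mathcal{L}^2|=|2\Theta|$, as in the opening of this section, and compare two explicit $\mathbb{P}^3$-models. Starting from the classical Cassels--Flynn quartic, whose four Kummer coordinates are $1$, $x^{(1)}+x^{(2)}$, $x^{(1)}x^{(2)}$, and the symmetric biquadratic in $y^{(1)}y^{(2)}$---precisely the invariants produced in part (i), hence rational functions of $U,X,Y$---one obtains a quartic surface whose coefficients are explicit polynomials in $\lambda_1,\lambda_2,\lambda_3$. It then remains to exhibit the projective-linear change of coordinates bringing this quartic into the $(\mathbb{Z}/2\mathbb{Z})^2$-symmetric normal form~\eqref{Goepel-Quartic}, separating the three monomial pairs $X_0^2X_1^2+X_2^2X_3^2$, $X_0^2X_2^2+X_1^2X_3^2$, $X_0^2X_3^2+X_1^2X_2^2$. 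The required transformation is dictated by the G\"opel (maximal isotropic) decomposition of $\operatorname{Jac}(\mathcal{C})[2]$ that pairs the six Weierstrass points as $(\lambda_1,\lambda_5),(\lambda_2,\lambda_3),(\lambda_4,\lambda_6)$; reading off the coefficients of the transformed quartic should produce exactly~\eqref{KummerParameter4}, and I would close by checking the constraint~\eqref{paramGH}, $D^2=A^2+B^2+C^2+ABC-4$, as an algebraic identity in the $\lambda_i$.

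I expect the main obstacle to lie entirely in part (ii): not in any single computation, but in correctly matching the G\"opel pairing of branch points to the monomial grouping of~\eqref{Goepel-Quartic} and thereby fixing the right projective transformation among the many equivalent quartic models. Tracking the $(\mathbb{Z}/2\mathbb{Z})^4$ level-two data so that each of $A,B,C,D$ acquires its stated symmetric function of $\lambda_1,\lambda_2,\lambda_3$ is the delicate step, and the identity~\eqref{paramGH} is the cleanest consistency check that the bookkeeping has been carried out correctly.
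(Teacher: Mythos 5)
The paper does not prove this proposition at all: it is imported verbatim from the reference cited immediately before it (Clingher et al.), so there is no in-paper argument to compare against. Your outline is the standard one and is essentially the argument carried out in that reference: Abel--Jacobi identifies $\mathcal{C}^{(2)}$ with the blow-up of $\operatorname{Jac}(\mathcal{C})$ at the origin, the relation $P+\imath_\mathcal{C}(P)\sim K_\mathcal{C}$ shows $\imath_\mathcal{C}\times\imath_\mathcal{C}$ descends to $-\mathbb{I}$, and the $|2\Theta|$-embedding compared against an explicit quartic model yields the coefficients. So the strategy is sound and, if anything, more informative than what the paper offers.

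Two caveats on the execution. First, a normalization issue in part (i): multiplying the two copies of~\eqref{Eq:Rosenhain} gives $Y^2 = U\,(U-X+1)\prod_i\bigl(U-\lambda_i X+\lambda_i^2\bigr)$, whereas~\eqref{kummer_middle} has factors $\lambda_i^2 U-\lambda_i X+1$; these agree only after a further birational substitution (effectively $U\mapsto 1/U$, $X\mapsto X/U$, $Y\mapsto Y/U^3$), so your claim that $U,X,Y$ as literally defined satisfy~\eqref{kummer_middle} needs that extra step or an acknowledgment that the paper's stated variables are not exactly $x^{(1)}x^{(2)}$, $x^{(1)}+x^{(2)}$, $y^{(1)}y^{(2)}$. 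Second, and more substantively, the entire content of the ``In particular'' clause is the explicit formulas~\eqref{KummerParameter4}, and your part (ii) stops at ``reading off the coefficients \emph{should} produce exactly~\eqref{KummerParameter4}.'' The projective transformation from the Cassels--Flynn quartic to the $(\mathbb{Z}/2\mathbb{Z})^2$-symmetric normal form is never exhibited, so the specific rational expressions for $A,B,C,D$ in the $\lambda_i$ --- and the verification of $D^2=A^2+B^2+C^2+ABC-4$ --- remain unproved. You correctly identify this as the delicate step, but identifying it is not the same as carrying it out; as written the proposal establishes the birationality statement but only gestures at the isomorphism with the stated coefficients.
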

\par The translation of the Jacobian $\mathcal{A}=\operatorname{Jac}(\mathcal{C})$ by a two-torsion point is an isomorphism of the Jacobian and maps the set of two-torsion points to itself. For any isotropic two-dimensional subspace $K \cong (\mathbb{Z}/2 \mathbb{Z})^2$ of $\mathcal{A}[2]$, also called called \emph{G\"opel group} in $\mathcal{A}[2]$, it is well-known that $\mathcal{A}'=\mathcal{A}/K$ is again a principally polarized abelian surface~\cite{MR2514037}*{Sec.~23}. Therefore, the isogeny $\Psi: \mathcal{A} \to \mathcal{A}'$ between principally polarized abelian surfaces has as its kernel the two-dimensional isotropic subspace $K$ of $\mathcal{A}[2]$. We call such an isogeny $\Psi$ a \emph{$(2,2)$-isogeny}, generalizing the isogeny in Remark~\ref{rem:duality}.  It turns out that there exists a complementary maximal isotropic subgroup in $\mathcal{A}[2]$, whose image in $\mathcal{A}'$ we denote by $K'$, such that $\mathcal{A}' /K' \cong \mathcal{A}$ and a corresponding dual $(2,2)$-isogeny $\Psi': \mathcal{A}' \to \mathcal{A}$. 
\par In the case $\mathcal{A}=\operatorname{Jac}(\mathcal{C})$ one may ask whether $\mathcal{A}' =\operatorname{Jac}(\mathcal{C}')$ for some other curve $\mathcal{C}'$ of genus two, and what the precise relationship between the moduli of $\mathcal{C}$ and $\mathcal{C}'$ is. The geometric moduli relationship between the two curves of genus two was found by Richelot \cite{MR1578135}; see \cite{MR970659}: if we choose for $\mathcal{C}$ a sextic equation $Y^2 = f_6(X,Z)$, then any factorization $f_6 = A\cdot B\cdot C$ into three degree-two polynomials $A, B, C$ defines a genus-two curve $\mathcal{C}'$ given by
\beq
\label{Richelot}
 \Delta \cdot Y^2 = [A,B] \, [A,C] \, [B,C] 
\eeq
where we have set $[A,B] = \partial A \, B - A \, \partial B$ with $\partial A$ the derivative of $A$ with respect to $X$ and $\Delta$ is the determinant of $A, B, C$ with respect to the basis $X^2, XZ, Z^2$. In \cite{Clingher:2019aa} a Richelot-isogeneous curve $\mathcal{C}'$ was obtained for the G\"opel group $K$ associated with pairing the roots according to $(\lambda_1,\lambda_5=1)$, $(\lambda_2,\lambda_3)$, $(\lambda_4=0,\lambda_6=\infty)$ and setting
\begin{small}
\begin{gather*}
[B,C]=x^2- \lambda_1,  [A,C]=x^2- \lambda_2\lambda_3, \\
[A,B]=(1+\lambda_1-\lambda_2-\lambda_3) \, x^2-2(\lambda_1-\lambda_2\lambda_3) \, x +\lambda_1\lambda_2+\lambda_1\lambda_3-\lambda_2\lambda_3-\lambda_1\lambda_2\lambda_3.
\end{gather*}
\end{small}%
A dual G\"opel group $K'$ is then obtained by setting $A'=[B,C]$, $B'=[A,C]$, $C'=[A,B]$ such that $\operatorname{Jac}(\mathcal{C}')/K' \cong \operatorname{Jac}(\mathcal{C})$ \cite{Clingher:2019aa}*{Sec.~3}. Moreover, when the isogeneous curve $\mathcal{C}'$ is written in the form
\beq
\label{Eq:Rosenhain_dual}
 \mathcal{C}': \quad y^2 = x \, \big(x-1\big) \, \big( x- \lambda'_1\big) \,  \big( x- \lambda'_2 \big) \,  \big( x- \lambda'_3 \big) \;,
\eeq 
with moduli $\Lambda'_1 = ( \lambda'_1 + \lambda'_2 \lambda'_3)/l'$,  $\Lambda'_2 = (\lambda'_2 + \lambda'_1 \lambda'_3)/l'$,  and  $\Lambda'_3 = (\lambda'_3 + \lambda'_1 \lambda'_2)/l'$ and $(l')^2 = \lambda'_1 \lambda'_2 \lambda'_3$, then a result of \cite{Clingher:2019aa}*{Sec.~3} proves that the moduli are related by
\begin{equation}
\label{relations_RosRoots}
 \begin{split}
 \begin{array}{rl}
   \Lambda_1 & = 2 \, \frac{2 \Lambda_1' - \Lambda_2'-\Lambda_3'}{\Lambda_2'-\Lambda_3'} \,,\\[0.6em]
   \Lambda_2 - \Lambda_1 & = - \frac{4(\Lambda_1'-\Lambda_2')(\Lambda_1'-\Lambda_3')}{(\Lambda_1'+2)(\Lambda_2'-\Lambda_3')} \,,\\[0.6em]
   \Lambda_3 - \Lambda_1 & = - \frac{4(\Lambda_1'-\Lambda_2')(\Lambda_1'-\Lambda_3')}{(\Lambda_1'-2)(\Lambda_2'-\Lambda_3')} \,,
  \end{array}
  & \qquad
 \begin{array}{rl}
   \Lambda'_1 & = 2 \, \frac{2 \Lambda_1 - \Lambda_2-\Lambda_3}{\Lambda_2 -\Lambda_3} \,,\\[0.6em]
   \Lambda'_2 - \Lambda'_1 & = - \frac{4(\Lambda_1-\Lambda_2)(\Lambda_1-\Lambda_3)}{(\Lambda_1+2)(\Lambda_2-\Lambda_3)} \,,\\[0.6em]
   \Lambda'_3 - \Lambda'_1 & = - \frac{4(\Lambda_1-\Lambda_2)(\Lambda_1-\Lambda_3)}{(\Lambda_1-2)(\Lambda_2-\Lambda_3)} \,.
  \end{array}   
  \end{split}
\end{equation}
\begin{remark}
\label{rem:symmetries2}
It was shown in \cite{Clingher:2019aa} that the two transformations changing signs of two coordinates in Equation~\eqref{eqn:reflections} generate the G\"opel group $K$ such that $\operatorname{Jac}(\mathcal{C}')/K' \cong \operatorname{Jac}(\mathcal{C})$ for the moduli related by Equations~\eqref{relations_RosRoots}.
\end{remark}
\subsection{Jacobian elliptic fibrations and Kummer sandwiches}
On the Kummer surface $\operatorname{Kum}(\mathcal{A})$ associated with a principally polarized abelian surface, there are always two sets of  sixteen $(-2)$-curves, called nodes and tropes, which are either the exceptional divisors corresponding to blow-up of the 16 two-torsion points or they arise from the embedding of the polarization divisor as symmetric theta divisors. These two sets of smooth rational curves have a rich symmetry, the so-called $16_6$-configuration  where each node intersects exactly six tropes and vice versa \cite{MR1097176}.  Using curves in the $16_6$-configuration,  one can find all elliptic fibrations since all irreducible components of a reducible fiber in an elliptic fibration are $(-2)$-curves \cite{MR0184257} and the Picard rank is seventeen. All inequivalent elliptic fibrations were determined explicitly by Kumar in \cite{MR3263663}. In particular, Kumar computed elliptic parameters and  Weierstrass equations for all twenty five different fibrations that appear, and analyzed the reducible fibers and  Mordell-Weil lattices. 
\par It was shown in \cites{Clingher:2017aa, MR4015343} that the Shioda sextic in Equation~\eqref{kummer_middle} defines a Jacobian elliptic fibration on $\mathcal{S}'=\operatorname{Kum}(\operatorname{Jac} \mathcal{C}')$ associated with the Jacobian of the genus-two $\mathcal{C}'$ in Equation~\eqref{Eq:Rosenhain_dual} given by the Weierstrass equation
\beq
\label{kummer_middle_ell_p_W}
\begin{split}
y^2  = x & \big( x-  u \left( u^2 - u \, \Lambda'_3  +1 \right)   \left( \Lambda_1'-\Lambda_2' \right) \big) \, \big( x- u\left( u^2 - u \, \Lambda'_2  +1 \right)    \left( \Lambda_1'-\Lambda_3' \right) \big) \,,
\end{split}
\eeq
such that the holomorphic two-form in Equation~\eqref{eqn:2form} coincides with $\omega_{\mathcal{S}'} = du \wedge dx/y$. We have the following:
\begin{lemma}
\label{lem:FibShioda}
Equation~\eqref{kummer_middle_ell_p_W} determines an elliptic fibration with section on the Kummer surface $\operatorname{Kum}(\operatorname{Jac} \mathcal{C}')$. Generically, the Weierstrass model has two singular fibers of Kodaira-type $I_0^*$ at $u=0, \infty$, six singular fibers of type $I_2$ located over a collection of base points invariant under $u \mapsto 1/u$, and the Mordell-Weil group $(\mathbb{Z}/2\mathbb{Z})^2 \oplus \langle 1 \rangle$.
\end{lemma}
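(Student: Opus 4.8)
The plan is to read the fiber configuration directly off the Weierstrass form~\eqref{kummer_middle_ell_p_W}, which is already presented in the shape $y^2 = x\,(x-\alpha)\,(x-\beta)$ carrying full two-torsion, and then to pin down the Mordell-Weil group by Shioda-Tate together with the known discriminant of the Kummer surface. Writing $\alpha = u\,(u^2 - \Lambda_3' u + 1)\,(\Lambda_1'-\Lambda_2')$ and $\beta = u\,(u^2-\Lambda_2' u + 1)\,(\Lambda_1'-\Lambda_3')$, the three sections $x=0,\alpha,\beta$ are two-torsion, so $(\mathbb{Z}/2\mathbb{Z})^2 \hookrightarrow \operatorname{MW}$ at once. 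First I would compute the discriminant, which for this form is a nonzero constant times $\alpha^2\beta^2(\alpha-\beta)^2$. The one genuinely algebraic step is the identity $\alpha-\beta = u\,(\Lambda_3'-\Lambda_2')\,(u^2 - \Lambda_1' u + 1)$; granting it, the discriminant becomes a constant times $u^6\prod_{i=1}^3 (u^2-\Lambda_i' u+1)^2$.

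Next I would determine the Kodaira types from this factorization. At each of the six roots of the quadratics $u^2-\Lambda_i'u+1$ exactly two of the three roots $0,\alpha,\beta$ collide while the third stays apart, so $\Delta$ vanishes to order two and $g_2,g_3$ do not; this produces six fibers of type $I_2$. Since each quadratic has reciprocal roots $u_0$ and $1/u_0$, these base points are permuted by $u\mapsto 1/u$, as claimed. At $u=0$ one checks $(\operatorname{ord} g_2,\operatorname{ord} g_3,\operatorname{ord}\Delta)=(2,3,6)$ with the rescaled two-torsion values $0,\alpha/u,\beta/u$ tending to distinct limits, which is exactly the $I_0^*$ condition; the substitution $(u,x,y)\mapsto(1/u,\,x/u^4,\,y/u^6)$ leaves~\eqref{kummer_middle_ell_p_W} invariant and transports this to $u=\infty$, giving the second $I_0^*$ fiber. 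Alternatively one may match the resulting fiber data against Kumar's classification of fibrations in~\cite{MR3263663}.

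Finally I would extract the Mordell-Weil group. Since $\operatorname{Kum}(\operatorname{Jac}\mathcal{C}')$ has Picard rank $17$, the Shioda-Tate formula with fiber contributions $2\cdot(5-1)+6\cdot(2-1)=14$ and the rank-two contribution of the zero section and general fiber forces $\operatorname{rank}\operatorname{MW}=17-16=1$, so there is an infinite-order section beyond the torsion. To fix its height I would compare discriminants: the trivial lattice has $|\operatorname{disc}|=|\operatorname{disc}(D_4)|^2\,|\operatorname{disc}(A_1)|^6 = 4^2\cdot 2^6 = 1024$, whereas $\operatorname{T}_{\operatorname{Kum}(\operatorname{Jac}\mathcal{C}')}\cong U(2)^{\oplus 2}\oplus\langle -4\rangle$ gives $|\operatorname{disc}(\operatorname{NS})| = 64$. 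Shioda's relation $|\operatorname{disc}(\operatorname{NS})|=|\operatorname{disc}(\mathrm{Triv})|\cdot\operatorname{disc}(\mathrm{MWL})/|\operatorname{MW}_{\mathrm{tor}}|^2$ with $|\operatorname{MW}_{\mathrm{tor}}|=4$ then yields $\operatorname{disc}(\mathrm{MWL})=1$, so the generator has height pairing one and $\operatorname{MW}=(\mathbb{Z}/2\mathbb{Z})^2\oplus\langle 1\rangle$. The main obstacle is the bookkeeping in this last step: one must confirm that the torsion is exactly $(\mathbb{Z}/2\mathbb{Z})^2$ — no larger torsion is compatible with the component groups of the $I_0^*$ and $I_2$ fibers — so that the height comes out to the clean value one; the fiber-type identification and the discriminant arithmetic are otherwise routine.
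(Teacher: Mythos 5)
Your proof is correct, and the fiber analysis (factoring the discriminant as a constant times $u^6\prod_{i=1}^3(u^2-\Lambda_i'u+1)^2$ via the identity $\alpha-\beta=u(\Lambda_3'-\Lambda_2')(u^2-\Lambda_1'u+1)$, reading off the two $I_0^*$ and six $I_2$ fibers, and using the reciprocal roots of each quadratic for the $u\mapsto 1/u$ symmetry) is exactly the kind of direct computation the paper intends when it says the singular fibers are identified "as in the proof of Lemma~2.7". Where you genuinely diverge is the Mordell--Weil group: the paper simply matches the fiber configuration against Kumar's classification of elliptic fibrations on $\operatorname{Kum}(\operatorname{Jac}\mathcal{C})$ and reads the answer off his tables, whereas you give a self-contained lattice-theoretic derivation -- Shioda--Tate forces rank one from $\rho=17$, the injection of $\operatorname{MW}_{\mathrm{tor}}$ into the product of component groups (all elementary abelian $2$-groups) caps the torsion at $(\mathbb{Z}/2\mathbb{Z})^2$, and the discriminant bookkeeping $64=1024\cdot\operatorname{disc}(\mathrm{MWL})/16$ pins the height of the generator at one. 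Your route has the advantage of not depending on an external classification and of making the height computation explicit and checkable; the paper's citation is shorter and also certifies that this really is one of the twenty-five fibrations on a generic Kummer surface. One small point worth tightening if you write this up: the statement that "no larger torsion is compatible with the component groups" should be phrased as the injectivity of $\operatorname{MW}_{\mathrm{tor}}$ into $\prod_v G_v$ (a torsion section through all identity components lies in the torsion-free narrow Mordell--Weil lattice), since the discriminant count alone does not by itself exclude a torsion group of order $8$.
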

\begin{proof}
One first identifies the collection of singular fibers as in the proof of Lemma~\ref{lem:J1}. Comparison with the results in \cite{MR3263663} then determines the Mordell-Weil group.
\end{proof}
\par The Jacobian elliptic Kummer surface $\mathcal{S}'=\operatorname{Kum}(\operatorname{Jac} \mathcal{C}')$ in Lemma~\ref{lem:FibShioda} admits an additional two-torsion section, namely  $T': (x,y)=(0,0)$, defining a Van Geemen-Sarti involution $k' \in \operatorname{Aut}(\mathcal{S}')$. It is straight forward to show that the minimal resolution of $\mathcal{S}'/\langle k' \rangle$ is a Jacobian elliptic K3 surface $\mathcal{Y}$ given by the Weierstrass equation
\beq
\label{kummer_middle_ell_upper}
\begin{split}
 Y^2=&\, X^3 +2 \, u^2\Big( (2 \Lambda'_1 -\Lambda'_2 -\Lambda'_3) (u+u^{-1}) +(2 \Lambda'_2\Lambda'_3 - \Lambda'_1\Lambda'_2-\Lambda'_1\Lambda'_3 )\Big)  X^2 \\
 & + u^4 (\Lambda'_2 -\Lambda'_3)^2 \big( (u+u^{-1}) - \Lambda'_1 \big)^2 X \,,
\end{split} 
\eeq
equipped with the holomorphic two-form $\omega_\mathcal{Y} = du \wedge dX/Y$. Obviously, the K3 surface $\mathcal{Y}$ also admits a second (commuting) Nikulin involution
\beq
\label{eqn:involution2b_top}
 \jmath': \;  (u, X, Y) \mapsto \left( \frac{1}{u}, \frac{X}{u^4}, - \frac{Y}{u^6} \right)\,.
\eeq
We have the following:
\begin{lemma}
\label{lem:FibShioda_upper}
Equation~\eqref{kummer_middle_ell_upper} determines an elliptic fibration with section on the K3 surface $\mathcal{Y}$. Generically, the Weierstrass model has two singular fibers of Kodaira-type $I_0^*$ at $u=0, \infty$, two singular fibers of type $I_4$ and four singular fibers of type $I_1$  located over a collection of base points invariant under $u \mapsto 1/u$, and the Mordell-Weil group $\mathbb{Z}/2\mathbb{Z} \oplus \langle 1 \rangle$.
\end{lemma}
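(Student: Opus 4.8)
The plan is to follow the template of the proof of Lemma~\ref{lem:FibShioda}: read the Kodaira fibers off the discriminant of the Weierstrass model~\eqref{kummer_middle_ell_upper}, confirm the list through the Euler-number balance, and then determine the Mordell-Weil group via the Shioda-Tate formula together with the discriminant-group bookkeeping already used in the rank-$19$ setting. Writing the fibration as $Y^2 = X^3 + A(u)\, X^2 + B(u)\, X$, the decisive structural feature is that
\[
 B(u) = (\Lambda'_2 - \Lambda'_3)^2 \, u^2 \, \big( u^2 - \Lambda'_1 u + 1\big)^2
\]
is a perfect square -- the source of the Van~Geemen-Sarti two-torsion -- whereas $A(u) = 2u\big( (2\Lambda'_1 - \Lambda'_2 - \Lambda'_3)(u^2+1) + (2\Lambda'_2\Lambda'_3 - \Lambda'_1\Lambda'_2 - \Lambda'_1\Lambda'_3)\, u \big)$ is a cubic. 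With $c_4 = 16 A^2 - 48 B$ and $\Delta = 16\, B^2 (A^2 - 4B)$, and using that $A^2 - 4B = 4u^2\, A_1(u) A_2(u)$ splits as a difference of two squares (hence into a product of two quadratics $A_1, A_2$), the discriminant factors as a nonzero constant times $u^6 (u^2 - \Lambda'_1 u + 1)^4$ times the quartic $A_1 A_2$.

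First I would identify the fibers from the valuations of $c_4$ and $\Delta$ in the sense of Kodaira~\cite{MR0165541}. At $u = 0$ -- and, by the symmetry $\jmath'$ of~\eqref{eqn:involution2b_top} exchanging $u$ and $1/u$, also at $u = \infty$ -- one finds $\operatorname{ord}(\Delta) = 6$ and $\operatorname{ord}(c_4) = 2$, giving type $I_0^*$; at the two roots of $u^2 - \Lambda'_1 u + 1$ one finds $\operatorname{ord}(\Delta) = 4$ and $\operatorname{ord}(c_4) = 0$, giving type $I_4$; and the four simple roots of the residual quartic $A_1 A_2$ give type $I_1$. The Euler numbers then balance as $2\cdot 6 + 2\cdot 4 + 4\cdot 1 = 24$, which simultaneously confirms that~\eqref{kummer_middle_ell_upper} is a minimal K3 Weierstrass model and shows that no singular fiber has been missed.

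Next I would pin down the Mordell-Weil group. The two-torsion section is visibly $T\colon (X,Y) = (0,0)$. Since $\mathcal{Y}$ is the minimal resolution of $\mathcal{S}'/\langle k'\rangle$, joined to the rank-$17$ Kummer surface $\mathcal{S}' = \operatorname{Kum}(\operatorname{Jac}\mathcal{C}')$ of Lemma~\ref{lem:FibShioda} by a degree-two (Van~Geemen-Sarti) rational map, it has Picard rank $17$ as well; the trivial lattice $U \oplus D_4 \oplus D_4 \oplus A_3 \oplus A_3$ has rank $16$, so the Shioda-Tate formula forces the Mordell-Weil rank to equal one. To see that the torsion is exactly $\mathbb{Z}/2\mathbb{Z}$, I would argue that a second independent two-torsion section would require $A^2 - 4B$ to be a square in $\mathbb{C}(u)$, which fails because its quartic factor $A_1 A_2$ is squarefree (the four $I_1$ fibers are distinct), while $T$ being two-divisible would require $A + 2\sqrt{B} = 2u\, A_2(u)$ to be a square, which fails because of its simple zero at $u = 0$. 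Finally, exactly as in the rank-$19$ computation, the reducible fibers and the torsion section contribute the factor $4^4/2^2 = 64$ to the determinant of the discriminant group; matching this against $|\det \operatorname{T}_{\mathcal{Y}}| = 64$ forces the infinite-order generator to have height pairing one.

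The main obstacle is this last point: while the fiber types and the Mordell-Weil rank are routine, certifying that the torsion is no larger than $\mathbb{Z}/2\mathbb{Z}$ and that the free generator has height exactly one is the delicate step. It rests on the squarefreeness and non-divisibility computations above together with the correct value of $|\det \operatorname{T}_{\mathcal{Y}}|$; alternatively, one settles it directly by comparing the fiber configuration with Kumar's tabulated Mordell-Weil lattices in~\cite{MR3263663}.
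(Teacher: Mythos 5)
Your fiber analysis is correct and is essentially the paper's own (terse) argument carried out in full: with $A=2u\big(\alpha u^2+\beta u+\alpha\big)$ and $B=(\Lambda'_2-\Lambda'_3)^2u^2(u^2-\Lambda'_1u+1)^2$ one finds
$A^2-4B=16\,(\Lambda'_1-\Lambda'_2)(\Lambda'_1-\Lambda'_3)\,u^2(u^2-\Lambda'_2u+1)(u^2-\Lambda'_3u+1)$,
so the configuration $2I_0^*+2I_4+4I_1$, the Euler-number check, the Mordell--Weil rank one from Shioda--Tate, and the torsion being exactly $\mathbb{Z}/2\mathbb{Z}$ (via squarefreeness of the residual quartic and non-divisibility of $T$) are all sound.

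The gap is the final step. You assert $|\det \operatorname{T}_{\mathcal{Y}}|=64$ with no justification, and that value is in fact incompatible with the surrounding geometry. By Proposition~\ref{prop2_17} there is a degree-two rational map $\varphi'\colon\mathcal{S}'\dashrightarrow\mathcal{Y}$, so $(\varphi')^*$ embeds $\operatorname{T}_{\mathcal{Y}}(2)$ as a finite-index sublattice of $\operatorname{T}_{\mathcal{S}'}$; writing $b$ for the index and using $|\det \operatorname{T}_{\mathcal{S}'}|=64$ (which is exactly what the data of Lemma~\ref{lem:FibShioda} gives), one gets $2^5\,|\det \operatorname{T}_{\mathcal{Y}}|=64\,b^2$, i.e.\ $|\det \operatorname{T}_{\mathcal{Y}}|=2b^2$ is twice a perfect square -- which $64$ is not. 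Equivalently, pushing the height-one generator of $\operatorname{MW}(\mathcal{S}')$ through the two-isogeny multiplies heights by $2$, which forces the free generator of $\operatorname{MW}(\mathcal{Y})$ to have height $2/m^2$ for some integer $m$, hence $1/2$ or $2$, never $1$. So your discriminant bookkeeping, done with a legitimate value of $|\det\operatorname{T}_{\mathcal{Y}}|$ ($64h=2b^2$), cannot yield height one; and your proposed fallback of reading the answer off Kumar's tables in \cite{MR3263663} is not available, since $\mathcal{Y}$ is not a Jacobian Kummer surface. As it stands the delicate step you correctly single out is resolved by a circular assertion; an honest completion of your computation gives $\mathbb{Z}/2\mathbb{Z}\oplus\langle 1/2\rangle$ (or $\langle 2\rangle$), and certifying the group as stated would require an independent determination of $\operatorname{T}_{\mathcal{Y}}$ that the proposal does not supply.
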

\begin{proof}
The proof is similar to the one of Lemma~\ref{lem:FibShioda}.
\end{proof}
We have the following result analogous to Proposition~\ref{prop2}:
\begin{proposition}
\label{prop2_17}
The K3 surface $\mathcal{Y}$ (given by Equation~\eqref{kummer_middle_ell_p_W}) and the Kummer surface $\mathcal{S}'$ (given by Equation~\eqref{kummer_middle_ell_upper}) admit dual Van Geemen-Sarti involution $k$ and $k'$ associated with fiberwise translations by the order-two section $T: (X,Y)=(0,0)$ and $T': (x,y)=(0,0)$, respectively, and a pair of dual geometric two-isogenies
\begin{equation}
\label{isog_middle_17}
 \xymatrix 
{ \mathcal{Y} \ar @(dl,ul) _{k}
\ar @/_0.5pc/ @{-->} _{\varphi} [rr] &
& \mathcal{S}' \ar @(dr,ur) ^{k'}
\ar @/_0.5pc/ @{-->} _{\varphi'} [ll] \\
} 
\end{equation}
such that $\omega_\mathcal{Y} = \varphi^* \omega_{\mathcal{S}'}$ and $2 \, \omega_{\mathcal{S}'} =  (\varphi')^* \omega_{\mathcal{Y}}$ for the holomorphic two-forms $\omega_{\mathcal{S}'} = du \wedge dx/y$ and $\omega_\mathcal{Y}=du \wedge dX/Y$ on  $\mathcal{S}'$ and $\mathcal{Y}$, respectively.
\end{proposition}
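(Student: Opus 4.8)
The plan is to follow the proof of Proposition~\ref{prop2} line by line, now in Picard rank $17$. Following the statement's convention I use coordinates $(X,Y)$ on $\mathcal{Y}$ and $(x,y)$ on $\mathcal{S}'$, so that \eqref{kummer_middle_ell_p_W} reads $Y^2 = X(X-a)(X-b) = X^3 + p_2 X^2 + p_1 X$ with
\[
 a = u\,(u^2 - \Lambda'_3\, u + 1)(\Lambda'_1-\Lambda'_2), \qquad b = u\,(u^2 - \Lambda'_2\, u + 1)(\Lambda'_1-\Lambda'_3),
\]
hence $p_2 = -(a+b)$ and $p_1 = ab$. The first observation is that $\mathcal{Y}$ carries full two-torsion and in particular the distinguished order-two section $T\colon (X,Y)=(0,0)$ named in the statement. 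Fiberwise translation by $T$ is the map $k\colon (X,Y)\mapsto (p_1/X,\,-p_1 Y/X^2)$ of \eqref{VGS_involution_middle}; since $k^*(dX/Y)=dX/Y$ it fixes $\omega_\mathcal{Y}=du\wedge dX/Y$ and is a Van Geemen-Sarti (hence Nikulin) involution.

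Next I would show that the minimal resolution of $\mathcal{Y}/\langle k\rangle$ is exactly $\mathcal{S}'$ as presented in \eqref{kummer_middle_ell_upper}. Here it is essential to take the \emph{unnormalized} two-isogeny quotient $y^2 = x^3 - 2p_2\,x^2 + (p_2^2-4p_1)\,x$ and to match it against \eqref{kummer_middle_ell_upper} through $-2p_2 = 2(a+b)$ and $p_2^2-4p_1 = (a+b)^2-4ab = (a-b)^2$. The sole genuine computation is to verify the closed forms
\[
 a+b = u\big[(u^2+1)(2\Lambda'_1-\Lambda'_2-\Lambda'_3) - u(\Lambda'_1\Lambda'_2+\Lambda'_1\Lambda'_3-2\Lambda'_2\Lambda'_3)\big],
\]
\[
 a-b = -u\,(\Lambda'_2-\Lambda'_3)(u^2 - \Lambda'_1\, u + 1),
\]
after which pulling out the factor $u$ matches $2(a+b)$ and $(a-b)^2$ against the $u^2(\,\cdots)(u+u^{-1})$ and $u^4(\Lambda'_2-\Lambda'_3)^2\big((u+u^{-1})-\Lambda'_1\big)^2$ coefficients of \eqref{kummer_middle_ell_upper} term by term. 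The same computation places the dual order-two section $T'\colon (x,y)=(0,0)$ on $\mathcal{S}'$ and is consistent with the prescribed drop of the torsion subgroup from $(\mathbb{Z}/2\mathbb{Z})^2$ in Lemma~\ref{lem:FibShioda} to $\mathbb{Z}/2\mathbb{Z}$ in Lemma~\ref{lem:FibShioda_upper}.

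With the two models matched I would exhibit the degree-two maps explicitly. For $\varphi$ I take the unnormalized V\'elu isogeny
\[
 \varphi\colon \mathcal{Y}\dashrightarrow\mathcal{S}',\qquad (X,Y)\mapsto\Big(\tfrac{Y^2}{X^2},\ \tfrac{Y(X^2-p_1)}{X^2}\Big),
\]
for which a one-line substitution gives $\varphi^*(dx/y)=dX/Y$, so that $\omega_\mathcal{Y}=\varphi^*\omega_{\mathcal{S}'}$ with factor one, as the statement demands. The dual involution $k'$ is translation by $T'$ on $\mathcal{S}'$, and $\varphi'$ is the corresponding V\'elu map out of $\mathcal{S}'$ post-composed with the linear rescaling $(X,Y)\mapsto(X/4,Y/8)$ that returns the quotient of $\mathcal{S}'$ to the normalization \eqref{kummer_middle_ell_p_W} of $\mathcal{Y}$; this rescaling is precisely what inserts the factor of two, giving $(\varphi')^*\omega_\mathcal{Y}=2\,\omega_{\mathcal{S}'}$. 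As a cross-check, $\varphi'\circ\varphi=[2]$ on the generic fiber and $[2]^*\omega_\mathcal{Y}=2\,\omega_\mathcal{Y}$, which is consistent with the two form factors. The remaining structural data of the two fibrations are supplied by Lemmas~\ref{lem:FibShioda} and~\ref{lem:FibShioda_upper}.

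I expect the main obstacle to be bookkeeping rather than anything conceptual. The delicate point is the normalization: in contrast to the presentation of Proposition~\ref{prop2}, where the quotient of $\mathcal{Y}$ was rescaled so that its coefficients carried explicit factors $-\tfrac12$ and $\tfrac1{16}$ and the isogeny \eqref{Eq:isogeny} acquired denominators $4$ and $8$, here \eqref{kummer_middle_ell_upper} is literally the unnormalized V\'elu quotient of \eqref{kummer_middle_ell_p_W}. Consequently the factor of two attaches to the \emph{dual} map $\varphi'$ rather than to $\varphi$, which is exactly the asymmetry recorded in the statement. Pinning down this normalization -- equivalently, recognizing that the distinguished factor-one isogeny $\varphi$ runs \emph{out of} the full-two-torsion surface $\mathcal{Y}$ of Equation~\eqref{kummer_middle_ell_p_W} into the single-two-torsion surface $\mathcal{S}'$ of Equation~\eqref{kummer_middle_ell_upper} -- is the one place that requires care.
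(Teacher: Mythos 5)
Your proposal is correct and takes essentially the same route as the paper: the paper's own proof is a one-line reference to Proposition~\ref{prop2}, and you carry out precisely that argument, supplying the V\'elu-quotient computation that the paper leaves implicit (your closed forms for $a+b$ and $a-b$ are right, and the unnormalized quotient of \eqref{kummer_middle_ell_p_W} by translation by $(0,0)$ is indeed \eqref{kummer_middle_ell_upper}), together with a consistent accounting of the two-form normalization factors. The one caveat, inherited from the statement itself rather than introduced by you, is that the parenthetical equation references there are swapped relative to the surrounding text: Lemma~\ref{lem:FibShioda} and Diagram~\ref{fig2} identify the full two-torsion model \eqref{kummer_middle_ell_p_W} as the Kummer surface $\mathcal{S}'=\operatorname{Kum}(\operatorname{Jac}\mathcal{C}')$ and \eqref{kummer_middle_ell_upper} as $\mathcal{Y}$, so the roles of $\mathcal{Y}$ and $\mathcal{S}'$ in your write-up are interchanged relative to the rest of the section, though the underlying algebra and the placement of the factor of two are unaffected.
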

\begin{proof}
The proof follows the one for Proposition~\ref{prop2} applied to the Jacobian elliptic fibrations in Equation~\eqref{kummer_middle_ell_p_W} and Equation~\eqref{kummer_middle_ell_upper}.
\end{proof}
We also have the following result from \cite{MR3263663}:
\begin{lemma}
\label{lem:fibKUM_JAC_C}
The Kummer surface $\mathcal{S}=\operatorname{Kum}(\operatorname{Jac} \mathcal{C})$ admits:
\begin{enumerate}
\item a Jacobian elliptic fibration whose Weierstrass model has four singular fibers of Kodaira-type $I_4$, eight singular fibers of type $I_1$ located over a collection of base points invariant under $t \mapsto -t$ and $t \mapsto 1/t$, and the Mordell-Weil group $\mathbb{Z}^3 \oplus \mathbb{Z}/2\mathbb{Z}$,
\item  a Jacobian elliptic fibration whose Weierstrass model has one singular fiber of Kodaira-type $I_4$, two singular fibers of type $I_1$, three singular fibers of type $I_0^*$, and the Mordell-Weil group $\mathbb{Z}/2\mathbb{Z}$.
\end{enumerate}
\end{lemma}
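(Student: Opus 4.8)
The plan is to obtain both fibrations from Kumar's complete classification of the twenty-five Jacobian elliptic fibrations on $\operatorname{Kum}(\operatorname{Jac}\mathcal{C})$ in \cite{MR3263663}, matching each claimed configuration to an entry of his table and reading the Mordell-Weil group off his explicit Weierstrass data. To make the identification unambiguous, I would first verify that each proposed fiber configuration is numerically consistent with a Jacobian elliptic K3 surface of Picard rank seventeen -- the Picard rank of $\operatorname{Kum}(\operatorname{Jac}\mathcal{C})$ -- so that the match to a fibration on $\mathcal{S}$ is essentially forced.

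For the numerical check I would use the Euler-number identity $\sum_v e(F_v) = e(\mathcal{S}) = 24$ together with the Shioda-Tate formula $\rho(\mathcal{S}) = 2 + \sum_v (m_v - 1) + \operatorname{rank}\operatorname{MW}$, where $m_v$ is the number of components of the fiber over $v$. For the first fibration the four $I_4$ and eight $I_1$ fibers give $4 \cdot 4 + 8 \cdot 1 = 24$ and $2 + 4 \cdot 3 + 3 = 17$; for the second the single $I_4$, the two $I_1$, and the three $I_0^*$ fibers give $1 \cdot 4 + 2 \cdot 1 + 3 \cdot 6 = 24$ and $2 + 3 + 3 \cdot 4 = 17$. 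Both are consistent, and in each case the Mordell-Weil rank ($3$ and $0$, respectively) is then forced once the reducible fibers are fixed.

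To locate the fibrations concretely I would produce an explicit Weierstrass model for each. For the first fibration I expect it to be the rank-seventeen analogue of the fibration $\mathcal{J}_1$ in Lemma~\ref{lem:J1}: writing it in a parameter $t$ on which the two commuting involutions $t \mapsto -t$ and $t \mapsto 1/t$ act, the two $I_8$ fibers of the rank-eighteen picture each degenerate into a pair of $I_4$ fibers at the four symmetric points, while the eight $I_1$ fibers lie over a base locus invariant under both involutions; the fiber types and the location of the $I_1$'s then follow by factoring the discriminant and inspecting its symmetry, exactly as in the proofs of Lemmas~\ref{lem:J1} and~\ref{lem:FibShioda}. The second fibration, with three $I_0^*$ fibers and a single $I_4$, I would identify with the fibration whose three $I_0^*$ reflect the three Rosenhain moduli $\lambda_1,\lambda_2,\lambda_3$ of $\mathcal{C}$, again reading the fibers from the discriminant.

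The remaining -- and hardest -- step is the determination of the full Mordell-Weil group. The two-torsion is straightforward: in each Weierstrass model it is exhibited directly as the section $(X,Y)=(0,0)$, as in the Van Geemen-Sarti setup of Proposition~\ref{prop2_17}. The difficulty is to show that the free part is exactly $\mathbb{Z}^3$ in the first case (and trivial in the second) with no further torsion. For this I would compare the determinant of the discriminant group of the lattice spanned by the fiber components and the zero section against the known discriminant of $\operatorname{NS}(\operatorname{Kum}(\operatorname{Jac}\mathcal{C}))$, exactly as in the Shioda-Tate argument following Lemma~\ref{lem:J1}, and then confirm the independence and height pairings of the three infinite-order sections against Kumar's explicit generators in \cite{MR3263663}. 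Verifying that these sections are linearly independent and that no extra torsion appears is where the real work lies, since none of it follows from the numerical data alone.
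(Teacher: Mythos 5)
Your proposal is correct and takes essentially the same route as the paper: the paper offers no proof at all for this lemma, presenting it purely as a result read off from Kumar's classification in \cite{MR3263663}, which is exactly the source you propose to match against (and your Euler-number and Shioda--Tate consistency checks, $4\cdot 4+8\cdot 1=24$ with rank $2+12+3=17$ and $4+2+18=24$ with rank $2+3+12=17$, are both correct). The extra verification you outline -- explicit Weierstrass models and discriminant-group comparison for the Mordell--Weil group -- is more than the paper does here, but it is the same argument the paper uses for its neighboring lemmas.
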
 
\par As shown in \cite{Clingher:2017aa}, a Weierstrass equation for fibration~(1) in Lemma~\ref{lem:fibKUM_JAC_C} is
\beq
\label{kummer_ell_upper_left}
\begin{split}
 y^2=&\, x^3 +2 \, t^2\Big( (2 \Lambda'_1 -\Lambda'_2 -\Lambda'_3) (t^2+t^{-2}) +(2 \Lambda'_2\Lambda'_3 - \Lambda'_1\Lambda'_2-\Lambda'_1\Lambda'_3 )\Big)  x^2 \\
 & + t^4 (\Lambda'_2 -\Lambda'_3)^2 \big( (t^2+t^{-2}) - \Lambda'_1 \big)^2 x \,,
\end{split} 
\eeq
equipped with the holomorphic two-form $\omega_\mathcal{S} = dt \wedge dx/y$. It admits the two commuting Nikulin involutions $\imath, \jmath$ given by
\beq
\label{eqn:involutions17}
 \imath: \, (t, x, y) \mapsto ( -t, x,-y) \,, \qquad  \jmath: \,  (t, x, y) \mapsto \left( \frac{1}{t}, \frac{x}{t^4}, - \frac{y}{t^6} \right) \,.
\eeq 
Similarly, a Weierstrass equation for fibration~(2) in Lemma~\ref{lem:fibKUM_JAC_C} is
\beq
\label{kummer_ell_upper_right}
\begin{split}
 y^2=&\, x^3 +2 \,(v^2-4) \Big( (2 \Lambda'_1 -\Lambda'_2 -\Lambda'_3)v +(2 \Lambda'_2\Lambda'_3 - \Lambda'_1\Lambda'_2-\Lambda'_1\Lambda'_3 ) \Big)  x^2 \\
 & +(v^2-4)^2 (\Lambda'_2 -\Lambda'_3)^2 (v- \Lambda'_1)^2 x \,,
\end{split} 
\eeq
equipped with the holomorphic two-form $\omega_\mathcal{S} = dv \wedge dx/y$.  Equation~\eqref{kummer_ell_upper_right} also admits an additional two-torsion section, namely  $T: (x,y)=(0,0)$, defining a Van Geemen-Sarti involution $k \in \operatorname{Aut}(\mathcal{S})$.  For convenience, we have expressed the coefficients of the fibration in terms of the moduli of the $(2,2)$-isogeneous curve $\mathcal{C}'$, rather than the moduli of  the curve $\mathcal{C}$. Because the coefficients of the Weierstrass equations only depend on $(\Lambda'_1, \Lambda'_2, \Lambda'_3)$, they can easily be expressed in terms of the moduli of the curve $\mathcal{C}$ using Equation~\eqref{relations_RosRoots}.
\par Thus, the Kummer sandwich theorem~\ref{prop1} generalizes as follows:
\begin{proposition}\label{prop1_17}
The Kummer surface $\mathcal{S}= \operatorname{Kum}(\operatorname{Jac} \mathcal{C})$ admits two commuting Nikulin involutions $\imath, \jmath$ (given by Equations~\eqref{eqn:involutions17}) such that the quotient surface $\mathcal{S}/\langle \imath \rangle$ is birational to $\mathcal{Y}$ in Equation~\eqref{kummer_middle_ell_upper}, and the quotient surface  $\mathcal{S}/\langle \imath, \jmath \rangle$ is birational to $\mathcal{S}$ itself. Thus, $\mathcal{S}$ dominates and is dominated by $\mathcal{Y}$ via the two rational maps of degree two
\beq
 \psi: \; \mathcal{S} \dasharrow \mathcal{Y} \,, \qquad \phi:  \mathcal{Y} \dasharrow \mathcal{S} \,,
 \eeq
given by Equations~\eqref{eqn:psi} and~\eqref{eqn:phi}, such that $2 \, \omega_\mathcal{S} = \psi^* \omega_\mathcal{Y} $ and $\omega_\mathcal{Y} = \phi^* \omega_{\mathcal{S}}$ for the holomorphic two-forms $\omega_\mathcal{Y}$ and $\omega_\mathcal{S}$ on $\mathcal{Y}$ and $\mathcal{S}$, respectively.
\end{proposition}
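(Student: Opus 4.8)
The plan is to mirror the proof of Proposition~\ref{prop1} step by step, replacing the rank-18 fibrations by their rank-17 analogues while keeping the very same quotient maps $\psi$ and $\phi$. First I would verify that the maps $\imath$ and $\jmath$ in Equation~\eqref{eqn:involutions17} act as commuting symplectic (Nikulin) involutions on fibration~(1) in Equation~\eqref{kummer_ell_upper_left}. For $\imath\colon(t,x,y)\mapsto(-t,x,-y)$ one checks directly that $\imath^*\omega_\mathcal{S}=\omega_\mathcal{S}$ with $\omega_\mathcal{S}=dt\wedge dx/y$, and the analogous computation for $\jmath\colon(t,x,y)\mapsto(1/t,x/t^4,-y/t^6)$ shows that it too leaves $\omega_\mathcal{S}$ invariant; composing the two in both orders gives $(-1/t,x/t^4,y/t^6)$, so $\imath$ and $\jmath$ commute.

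Next I would realize the first quotient. The map $\psi$ of Equation~\eqref{eqn:psi}, namely $(t,x,y)\mapsto(u,X,Y)=(t^2,t^2x,t^3y)$, is manifestly $\imath$-invariant, and substituting it into Equation~\eqref{kummer_ell_upper_left} transforms that Weierstrass equation into Equation~\eqref{kummer_middle_ell_upper}, i.e.\ into the fibration on $\mathcal{Y}$ from Lemma~\ref{lem:FibShioda_upper}; the structural parallel with the rank-18 computation (where the same $\psi$ carried Equation~\eqref{eqn:J1} to Equation~\eqref{eqn:B12}) makes the substitution routine, with the powers of $t$ matching degree by degree. Tracking the two-form through this degree-two ramified cover yields $\psi^*\omega_\mathcal{Y}=2\,\omega_\mathcal{S}$, so the minimal resolution of $\mathcal{S}/\langle\imath\rangle$ is birational to $\mathcal{Y}$.

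For the second quotient I would use that $\jmath$ descends through $\psi$ to the involution $\jmath'$ of Equation~\eqref{eqn:involution2b_top} on $\mathcal{Y}$ (one verifies $\psi\circ\jmath=\jmath'\circ\psi$), and that the map $\phi$ of Equation~\eqref{eqn:phi}, namely $(u,X,Y)\mapsto(v,x,y)=(u+u^{-1},(u^2-1)^2X/u^4,(u^2-1)^3Y/u^6)$, is $\jmath'$-invariant. Using $v^2-4=(u^2-1)^2/u^2$, substitution of $\phi$ into Equation~\eqref{kummer_middle_ell_upper} produces exactly Equation~\eqref{kummer_ell_upper_right}, which by Lemma~\ref{lem:fibKUM_JAC_C} is the Weierstrass form of fibration~(2) on $\mathcal{S}=\operatorname{Kum}(\operatorname{Jac}\mathcal{C})$; hence $\mathcal{S}/\langle\imath,\jmath\rangle$ is birational to $\mathcal{S}$ itself, and tracking the two-form gives $\phi^*\omega_\mathcal{S}=\omega_\mathcal{Y}$. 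Assembling the two maps then establishes the sandwich $\psi\colon\mathcal{S}\dashrightarrow\mathcal{Y}$ and $\phi\colon\mathcal{Y}\dashrightarrow\mathcal{S}$ with the asserted transformation of holomorphic two-forms.

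The main obstacle I anticipate is not any single computation — each substitution is parallel to one already carried out in the rank-18 case — but rather confirming that the surface recovered after the double quotient is genuinely $\operatorname{Kum}(\operatorname{Jac}\mathcal{C})$ again, and not a distinct (for instance $(2,2)$-isogenous) Kummer surface. In Proposition~\ref{prop1} this identification required the cross-ratio analysis of Lemma~\ref{lem:J7moduli}; here it is instead supplied directly by Lemma~\ref{lem:fibKUM_JAC_C}, since Equation~\eqref{kummer_ell_upper_right} is by that lemma a fibration on $\mathcal{S}$ itself, its coefficients depending only on $(\Lambda_1',\Lambda_2',\Lambda_3')$, a convenient reparametrization of the moduli of $\mathcal{C}$ via Equation~\eqref{relations_RosRoots}. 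The remaining care is purely birational bookkeeping: checking that $\psi$ and $\phi$ have degree two and are compatible with the Nikulin resolutions away from their indeterminacy loci.
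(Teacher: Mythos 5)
Your proposal is correct and follows essentially the same route as the paper, whose own proof simply declares the argument identical to that of Proposition~\ref{prop1}: you instantiate that argument in Picard rank 17 by checking the involutions of Equation~\eqref{eqn:involutions17} on the fibration~\eqref{kummer_ell_upper_left}, pushing the Weierstrass equation through $\psi$ and $\phi$ to land on Equations~\eqref{kummer_middle_ell_upper} and~\eqref{kummer_ell_upper_right}, and identifying the latter as fibration~(2) of Lemma~\ref{lem:fibKUM_JAC_C} on $\operatorname{Kum}(\operatorname{Jac}\mathcal{C})$ itself. The substitutions, the commutativity check, and the two-form bookkeeping ($\psi^*\omega_\mathcal{Y}=2\,\omega_\mathcal{S}$, $\phi^*\omega_\mathcal{S}=\omega_\mathcal{Y}$) all verify, and your observation that the final identification is supplied by Lemma~\ref{lem:fibKUM_JAC_C} rather than a cross-ratio computation is exactly the role that lemma plays in the paper.
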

\begin{proof}
The proof is the same as the proof for Proposition~\ref{prop1_17}.
\end{proof}
\par The involution $\jmath$ induces Nikulin involutions $\jmath' \in \operatorname{Aut}(\mathcal{S}')$, in exactly the same way it did in Picard rank 18. We obtain a new K3 surface $\mathcal{Y}'$ as the minimal resolution of $\mathcal{S}'/\langle \jmath' \rangle$. The quotient map induces the degree-two rational map $\phi': \mathcal{S}' \dasharrow \mathcal{Y}'$ (given by Equation~\eqref{eqn:phi_prime}) such that $(\phi')^* \omega_{\mathcal{Y}'} =  \omega_{\mathcal{S}'}$ for the holomorphic two-form $\omega_{\mathcal{Y}'} = dv \wedge dX/Y$ on $\mathcal{Y}'_{\lambda_1, \lambda_2}$, where a Weierstrass equation for $\mathcal{Y}'$ is given by
\beq
\label{eqn:Zfib17}
\begin{split}
Y^2 = & \, X^3 - (v^2-4) \Big( (2 \Lambda'_1 -\Lambda'_2 -\Lambda'_3)v +(2 \Lambda'_2\Lambda'_3 - \Lambda'_1\Lambda'_2-\Lambda'_1\Lambda'_3 ) \Big)  X^2 \\
& + (v^2-4)^2  ( \Lambda'_1 -\Lambda'_2) ( \Lambda'_1 -\Lambda'_3)  (v -\Lambda'_2)  (v -\Lambda'_3) \,X \,.
\end{split}
\eeq
We have the following:
\begin{lemma}
\label{lem:Zfib17}
Equation~\eqref{eqn:Zfib17} defines an elliptic fibration with section on the K3 surface $\mathcal{Y}'$. Generically, the Weierstrass model has three singular fibers of Kodaira-type $I_0^*$, three singular fibers of type $I_2$, and the Mordell-Weil group $(\mathbb{Z}/2\mathbb{Z})^2$.
\end{lemma}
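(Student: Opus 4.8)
The equation \eqref{eqn:Zfib17} already has the shape \eqref{eqn:WEQ2}, $Y^2 = X^3 + A(v)\,X^2 + B(v)\,X$, with
\[
A = -(v^2-4)\bigl[(2\Lambda'_1-\Lambda'_2-\Lambda'_3)\,v + (2\Lambda'_2\Lambda'_3-\Lambda'_1\Lambda'_2-\Lambda'_1\Lambda'_3)\bigr], \quad B = (v^2-4)^2(\Lambda'_1-\Lambda'_2)(\Lambda'_1-\Lambda'_3)(v-\Lambda'_2)(v-\Lambda'_3).
\]
Since $\mathcal{Y}'$ is already a K3 surface (being the minimal resolution of the Nikulin quotient $\mathcal{S}'/\langle\jmath'\rangle$), and the point at infinity is the section while $(X,Y)=(0,0)$ is a visible two-torsion section, the real content of the lemma is the fiber configuration and the group $\operatorname{MW}$. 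The plan, exactly as for Lemmas~\ref{lem:J7} and~\ref{lem:Zfib}, is to factor the discriminant $\Delta = 16\,B^2(A^2-4B)$ of the Weierstrass cubic, read off the Kodaira types, and then fix $\operatorname{MW}$ via Shioda--Tate and a comparison with Kumar's classification~\cite{MR3263663}.

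Writing $A=(v^2-4)\tilde A$ and $B=(v^2-4)^2\tilde B$ with $\deg\tilde A=1$ and $\deg\tilde B=2$ gives $\Delta = 16\,(v^2-4)^6\,\tilde B^2\,(\tilde A^2-4\tilde B)$. The one genuine computation --- and the step I expect to drive the whole result --- is the perfect-square identity
\[
\tilde A^2 - 4\tilde B = (\Lambda'_2-\Lambda'_3)^2\,(v-\Lambda'_1)^2.
\]
Comparing the three coefficients, this reduces to symmetric-function identities that all collapse onto $(\Lambda'_2+\Lambda'_3)^2-4\Lambda'_2\Lambda'_3 = (\Lambda'_2-\Lambda'_3)^2$; for instance the leading coefficient is $(2\Lambda'_1-\Lambda'_2-\Lambda'_3)^2 - 4(\Lambda'_1-\Lambda'_2)(\Lambda'_1-\Lambda'_3)=(\Lambda'_2-\Lambda'_3)^2$. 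Consequently $A^2-4B = \bigl[(v^2-4)(\Lambda'_2-\Lambda'_3)(v-\Lambda'_1)\bigr]^2$ and
\[
\Delta = 16\,(v^2-4)^6\,(\Lambda'_1-\Lambda'_2)^2(\Lambda'_1-\Lambda'_3)^2(\Lambda'_2-\Lambda'_3)^2\,(v-\Lambda'_1)^2(v-\Lambda'_2)^2(v-\Lambda'_3)^2.
\]

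With $\Delta$ so factored, the fibers follow. At $v=\pm2$ both $A$ and $B$ vanish, so the reduction is additive, and $(\operatorname{ord}g_2,\operatorname{ord}g_3,\operatorname{ord}\Delta)=(2,3,6)$ forces type $I_0^*$ (any $I_n^*$ with $n\ge1$ would have $\operatorname{ord}\Delta\ge 7$); the same triple appears at $v=\infty$ after the chart $v=1/s,\ X=\tilde X/s^4,\ Y=\tilde Y/s^6$, using $\deg A=3$, $\deg B=6$, $\deg\Delta=18$. At each of $v=\Lambda'_1,\Lambda'_2,\Lambda'_3$ one has $\operatorname{ord}\Delta=2$ with multiplicative reduction --- a node at $X=0$ where $B$ has a simple zero (at $v=\Lambda'_2,\Lambda'_3$), and a node at the double root of $X^2+AX+B$ where $A^2-4B$ has its double zero (at $v=\Lambda'_1$) --- so these are of type $I_2$, the two tangent directions being distinct for generic moduli. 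The Euler-number tally $3\cdot 6 + 3\cdot 2 = 24$ confirms the list is complete and that $\Lambda'_1,\Lambda'_2,\Lambda'_3,\pm2,\infty$ stay pairwise distinct generically.

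Finally, for $\operatorname{MW}$: the perfect-square form of $A^2-4B$ shows the two further two-torsion points $\bigl(\tfrac{-A\pm(v^2-4)(\Lambda'_2-\Lambda'_3)(v-\Lambda'_1)}{2},\,0\bigr)$ are rational over $\mathbb{C}(v)$, so $(\mathbb{Z}/2\mathbb{Z})^2\subseteq\operatorname{MW}$. Since $\mathcal{Y}'$ has Picard rank $17$ (the Nikulin quotient preserving transcendental rank), Shioda--Tate reads $17 = 2 + (3\cdot 4 + 3\cdot 1) + \operatorname{rank}\operatorname{MW}$, forcing $\operatorname{rank}\operatorname{MW}=0$. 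The only remaining care is to exclude extra torsion: this follows from the compatibility of any torsion section with the component groups of the three $I_0^*$ and three $I_2$ fibers, or most cleanly by matching the configuration against the corresponding entry of Kumar's list~\cite{MR3263663}, yielding $\operatorname{MW}=(\mathbb{Z}/2\mathbb{Z})^2$.
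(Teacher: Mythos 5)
Your proposal is correct and follows essentially the same route as the paper, whose proof of this lemma is simply ``analogous to the proof of Lemma~\ref{lem:J7}'', i.e.\ factor the discriminant of the Weierstrass model, read off the Kodaira types, and pin down the Mordell--Weil group via Shioda--Tate and comparison with Kumar's classification in \cite{MR3263663}. The perfect-square identity $\tilde A^2-4\tilde B=(\Lambda'_2-\Lambda'_3)^2(v-\Lambda'_1)^2$, the resulting fiber configuration $3\,I_0^*+3\,I_2$ with Euler number $24$, and the conclusion $\operatorname{MW}=(\mathbb{Z}/2\mathbb{Z})^2$ all check out.
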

\begin{proof}
The proof is analogous to the proof of Lemma~\ref{lem:J7}.
\end{proof}
\par Corollary~\ref{prop3} then generalizes as well. We have:
\begin{corollary}
\label{prop3_17}
The K3 surface $\mathcal{Y}'$ and the Kummer surface $\mathcal{S}= \operatorname{Kum}(\operatorname{Jac} \mathcal{C})$ admit dual Van Geemen-Sarti involution $k$ and $k'$ associated with fiberwise translations by the order-two section $T': (X,Y)=(0,0)$ and $T: (x,y)=(0,0)$, respectively, and a pair of dual geometric two-isogenies
\begin{equation}
\label{isog_right_17}
 \xymatrix 
{ \mathcal{Y}' \ar @(dl,ul) _{k'}
\ar @/_0.5pc/ @{-->} _{\chi'} [rr] &
& \mathcal{S} \ar @(dr,ur) ^{k}
\ar @/_0.5pc/ @{-->} _{\chi} [ll] \\
} 
\end{equation}
such that $\omega_\mathcal{S} = \chi^* \omega_{\mathcal{Y}'}$ and $2 \, \omega_{\mathcal{Y}'} =  (\chi')^* \omega_{\mathcal{S}}$ for the holomorphic two-forms $\omega_{\mathcal{Y}'} = dv \wedge dx/y$ and $\omega_\mathcal{S}=du \wedge dX/Y$ on $\mathcal{S}_{\lambda_1, \lambda_2}$ and $\mathcal{Y}'_{\lambda_1, \lambda_2}$, respectively.
\end{corollary}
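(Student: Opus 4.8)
The plan is to mirror the proof of Proposition~\ref{prop2_17}, now applied to the Jacobian elliptic fibration on $\mathcal{S}=\operatorname{Kum}(\operatorname{Jac}\mathcal{C})$ given by Equation~\eqref{kummer_ell_upper_right} and the fibration on $\mathcal{Y}'$ given by Equation~\eqref{eqn:Zfib17}. First I would read off Equation~\eqref{kummer_ell_upper_right} in the normal form $y^2=x^3+p_2\,x^2+p_1\,x$ with
\[
p_2 = 2(v^2-4)\big((2\Lambda'_1-\Lambda'_2-\Lambda'_3)v+(2\Lambda'_2\Lambda'_3-\Lambda'_1\Lambda'_2-\Lambda'_1\Lambda'_3)\big),\qquad p_1=(v^2-4)^2(\Lambda'_2-\Lambda'_3)^2(v-\Lambda'_1)^2,
\]
so that the two-torsion section $T\colon(x,y)=(0,0)$ is manifest. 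Fiberwise translation by $T$ defines the Van Geemen-Sarti involution $k\in\operatorname{Aut}(\mathcal{S})$, given on the generic fiber by $(x,y)\mapsto(p_1/x,-p_1y/x^2)$; as in Proposition~\ref{prop2} this is a Nikulin involution with eight fixed points.

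Next I would resolve the eight nodes of $\mathcal{S}/\langle k\rangle$ and apply the standard degree-two quotient formula used in Proposition~\ref{prop2}, which yields the Weierstrass model $Y^2=X^3-\tfrac12 p_2\,X^2+\tfrac14(\tfrac14 p_2^2-p_1)\,X$. The crux is to verify that this quotient equation coincides with Equation~\eqref{eqn:Zfib17}, i.e.\ to check the polynomial identity
\[
\tfrac14\Big(\big((2\Lambda'_1-\Lambda'_2-\Lambda'_3)v+(2\Lambda'_2\Lambda'_3-\Lambda'_1\Lambda'_2-\Lambda'_1\Lambda'_3)\big)^2-(\Lambda'_2-\Lambda'_3)^2(v-\Lambda'_1)^2\Big)=(\Lambda'_1-\Lambda'_2)(\Lambda'_1-\Lambda'_3)(v-\Lambda'_2)(v-\Lambda'_3).
\]
This follows from a difference-of-squares factorization: the two linear factors of the left-hand side come out to $2(\Lambda'_1-\Lambda'_2)(v-\Lambda'_3)$ and $2(\Lambda'_1-\Lambda'_3)(v-\Lambda'_2)$, whose product divided by four reproduces the right-hand side. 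With this identity the resolved quotient is exactly $\mathcal{Y}'$, so $\mathcal{Y}'$ is the Van Geemen-Sarti dual of $\mathcal{S}$, and the dual involution $k'\in\operatorname{Aut}(\mathcal{Y}')$ is produced from the two-torsion section $T'\colon(X,Y)=(0,0)$ in the same way.

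Finally, to realize the maps in Equation~\eqref{isog_right_17} I would record the explicit isogeny and its dual (the formulas from Proposition~\ref{prop2}),
\[
\chi\colon(x,y)\mapsto\Big(\tfrac{y^2}{4x^2},\,\tfrac{y(x^2-p_1)}{8x^2}\Big),\qquad \chi'\colon(X,Y)\mapsto\Big(\tfrac{Y^2}{X^2},\,\tfrac{Y(16X^2-p_2^2+4p_1)}{16X^2}\Big),
\]
and pull back the holomorphic two-forms $\omega_\mathcal{S}=dv\wedge dx/y$ and $\omega_{\mathcal{Y}'}=dv\wedge dX/Y$ by a direct substitution, obtaining $\omega_\mathcal{S}=\chi^*\omega_{\mathcal{Y}'}$ and $2\,\omega_{\mathcal{Y}'}=(\chi')^*\omega_\mathcal{S}$, the factor of two reflecting that the composite of the two isogenies is multiplication by two on the fibers. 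I do not anticipate a genuine obstacle: the only non-formal content is the displayed identity, and since it factors cleanly the verification is routine. The single point requiring care is the bookkeeping of directionality and of the factors of two in the two-form relations, so that the statement is consistent with Corollary~\ref{prop3} and with its Picard-rank-$18$ specialization.
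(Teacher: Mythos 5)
Your proposal is correct and follows exactly the route the paper intends: the paper's proof of this corollary consists of the single sentence that it is analogous to the proof of Proposition~\ref{prop2}, i.e.\ the Van Geemen--Sarti quotient construction applied to the fibration~\eqref{kummer_ell_upper_right} on $\mathcal{S}$, and your difference-of-squares verification that $\tfrac14\big(\tfrac14 p_2^2-p_1\big)$ reproduces the $X$-coefficient of Equation~\eqref{eqn:Zfib17} is the computation the paper leaves implicit. The factorization into $2(\Lambda'_1-\Lambda'_2)(v-\Lambda'_3)$ and $2(\Lambda'_1-\Lambda'_3)(v-\Lambda'_2)$ checks out, so nothing is missing.
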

\begin{proof}
The proof is analogous to the proof of Theorem~\ref{prop2}.
\end{proof}
\begin{figure}[ht]
\centerline{
\xymatrix{
*+[F--]{\mathcal{S} = \operatorname{Kum}(\operatorname{Jac}\mathcal{C})}  \ar@{-->}[r]^{\psi}   \ar@{->}[rd]^{\Psi}
	& *+[F-,]{\mathcal{Y}} \ar@{-->}[r]^{\phi}  \ar@{-->}[d]^{\varphi} 
	& *+[F--]{\mathcal{S} = \operatorname{Kum}(\operatorname{Jac}\mathcal{C})}  \ar@{-->}[d]^{\chi} \\
	& *+[F--]{\mathcal{S}' = \operatorname{Kum}(\operatorname{Jac}\mathcal{C})}  \ar@{-->}[r]^{\phi'}  \ar@{-->}[u]<1ex>^{\varphi'}  \ar@{->}[ru]^{\Psi'}
	& *+[F-,]{\mathcal{Y}'} \ar@{-->}[u]<1ex>^{\chi'} 
}}
\caption{\label{fig2}}
\end{figure}
\par We summarize the results of Propositions~\ref{prop1_17} and~\ref{prop2_17}, and Corollary~\ref{prop3_17} in Diagram~\ref{fig2}. Here, $\mathcal{C}$ is the genus-two curve in Equation~\eqref{Eq:Rosenhain}, $\mathcal{C}'$ is the $(2,2)$-isogeneous genus-two curve in Equation~\eqref{Eq:Rosenhain_dual}, and $\Psi$ and $\Psi'$ are the rational maps induced by the $(2,2)$-isogenies between them. The moduli of the genus-two curve $\mathcal{C}$ are the Rosenhain roots $(\lambda_1, \lambda_2, \lambda_3)$. From those we obtain $\Lambda_1 = ( \lambda_1 + \lambda_2 \lambda_3)/l$,  $\Lambda_2 = (\lambda_2 + \lambda_1 \lambda_3)/l$,  and  $\Lambda_3 = (\lambda_3 + \lambda_1 \lambda_2)/l$ on the double cover $\widetilde{\mathcal{M}}_2$ of the moduli space given by $l^2 = \lambda_1 \lambda_2 \lambda_3$. The moduli $\Lambda'_1, \Lambda'_2, \Lambda'_3$ are obtained in the same way from the Rosenhain roots $(\lambda'_1, \lambda'_2, \lambda'_3)$ of $\mathcal{C}'$, and related via Equations~\eqref{relations_RosRoots}.
\par We now state our first main result:
\begin{theorem}
\label{prop6}
The Kummer surface $\mathcal{S}= \operatorname{Kum}(\operatorname{Jac} \mathcal{C})$ admits three commuting Nikulin involutions $\imath, \jmath, k$ such that the quotient surface $\mathcal{S}/\langle \imath \rangle$ is birational to $\mathcal{Y}$ in Equation~\eqref{kummer_middle_ell_upper}, $\mathcal{S}/\langle \imath, \jmath \rangle$ is birational to $\mathcal{S}$, and  $\mathcal{S}/\langle \imath, \jmath, k \rangle$ is birational to $\mathcal{Y}'$ in Equation~\eqref{eqn:Zfib17}. In particular, there are rational maps of degree two
\beq
 \operatorname{Kum}(\operatorname{Jac} \mathcal{C}) \overset{\psi}{\dashrightarrow} \mathcal{Y} 
 \overset{\phi}{\dashrightarrow} \operatorname{Kum}(\operatorname{Jac} \mathcal{C}) \overset{\chi}{\dashrightarrow} 
 \mathcal{Y}' \,,
\eeq 
and
\beq
\label{eqn:sandwich2}
 \operatorname{Kum}(\operatorname{Jac} \mathcal{C}) \overset{\psi}{\dashrightarrow} \mathcal{Y} 
 \overset{\varphi}{\dashrightarrow} \operatorname{Kum}(\operatorname{Jac} \mathcal{C}') \overset{\phi'}{\dashrightarrow} 
 \mathcal{Y}' \,,
\eeq 
such that $\omega_\mathcal{Y} = \phi^*\chi^* \omega_{\mathcal{Y}'}= \varphi^*\phi'^* \omega_{\mathcal{Y}'}$ for the holomorphic two-forms $\omega_\mathcal{Y} = du \wedge dX/Y$ and $\omega_{\mathcal{Y}'}=dv \wedge dX/Y$. The K3 surface $\mathcal{Y}'$ is birational to the Legendre pencil
\beq
\label{eqn:Legendre19b}
  \tilde{y}^2 = - \frac{16(\Lambda_1-\Lambda_3)(\Lambda_1-\Lambda_2)}{(\Lambda_2-\Lambda_3)^2(\Lambda_1^2-4)} \, \big(\tilde{x}^2-4\big) \big(\tilde{x}-\tilde{v}\big) \big(\tilde{v}- \Lambda_1\big) \big(\tilde{v}- \Lambda_2\big) \big(\tilde{v} - \Lambda_3\big)  \,,
\eeq
equipped with the holomorphic two-form $d\tilde{v} \wedge d\tilde{x}/\tilde{y}$ or, equivalently, given by
\beq
\label{eqn:Legendre19a}
  y^2 = - \big(x^2-4\big) \big(x -w\big) \big( w - \Lambda'_1\big) \big( w - \Lambda'_2\big) \big( w - \Lambda'_3\big) \,,
\eeq
equipped with the holomorphic two-form $dw \wedge dx/y$.
\end{theorem}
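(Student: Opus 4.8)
The plan is to read the entire statement off the diagram in Figure~\ref{fig2}, whose individual arrows have already been constructed, and then to carry out the one genuine computation: the reduction of the Weierstrass model \eqref{eqn:Zfib17} of $\mathcal{Y}'$ to the twisted Legendre pencils \eqref{eqn:Legendre19a} and \eqref{eqn:Legendre19b}. For the structural part, the two chains of degree-two maps are simply the two directed paths from $\mathcal{S}=\operatorname{Kum}(\operatorname{Jac}\mathcal{C})$ to $\mathcal{Y}'$ in Figure~\ref{fig2}: the first, $\psi,\phi,\chi$, runs along the top row and then down the right edge, and is provided by Proposition~\ref{prop1_17} (for $\psi,\phi$) and Corollary~\ref{prop3_17} (for $\chi$); the second, $\psi,\varphi,\phi'$, goes across the top, down the middle via the Van Geemen-Sarti isogeny $\varphi$ of Proposition~\ref{prop2_17}, and then along the bottom via $\phi'$. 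The two-form identities $\omega_\mathcal{Y}=\phi^*\chi^*\omega_{\mathcal{Y}'}=\varphi^*(\phi')^*\omega_{\mathcal{Y}'}$ then require no new work: chaining $\omega_\mathcal{Y}=\phi^*\omega_\mathcal{S}$ (Proposition~\ref{prop1_17}) with $\omega_\mathcal{S}=\chi^*\omega_{\mathcal{Y}'}$ (Corollary~\ref{prop3_17}) gives the first equality, and chaining $\omega_\mathcal{Y}=\varphi^*\omega_{\mathcal{S}'}$ (Proposition~\ref{prop2_17}) with $(\phi')^*\omega_{\mathcal{Y}'}=\omega_{\mathcal{S}'}$ gives the second.

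For the three commuting Nikulin involutions, I would take $\imath,\jmath$ as in \eqref{eqn:involutions17} and $k$ to be the Van Geemen-Sarti involution attached to the two-torsion section $T\colon(x,y)=(0,0)$ of the fibration \eqref{kummer_ell_upper_right} on $\mathcal{S}$. The identifications $\mathcal{S}/\langle\imath\rangle\sim\mathcal{Y}$ and $\mathcal{S}/\langle\imath,\jmath\rangle\sim\mathcal{S}$ are exactly Proposition~\ref{prop1_17}, while $\mathcal{S}/\langle k\rangle\sim\mathcal{Y}'$ is the content of Corollary~\ref{prop3_17}; combining the latter with $\mathcal{S}/\langle\imath,\jmath\rangle\sim\mathcal{S}$ yields $\mathcal{S}/\langle\imath,\jmath,k\rangle\sim\mathcal{Y}'$. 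The one point needing verification is that $k$ commutes with $\imath$ and $\jmath$; since all three act trivially on the holomorphic two-form and $k$ is a fiberwise translation by two-torsion, I would check commutativity directly in the coordinates of \eqref{kummer_ell_upper_right}, or note that on the Kummer surface these involutions descend from the group of two-torsion translations and the exchange symmetries of $\operatorname{Jac}\mathcal{C}$, which commute.

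The substantive step is the passage from \eqref{eqn:Zfib17} to the Legendre form. Because Lemma~\ref{lem:Zfib17} gives Mordell-Weil group $(\mathbb{Z}/2\mathbb{Z})^2$, the cubic in \eqref{eqn:Zfib17} factors completely over the function field; explicitly I expect $Y^2=X\big(X-\alpha\big)\big(X-\beta\big)$ with $\alpha=(v^2-4)(\Lambda'_1-\Lambda'_2)(v-\Lambda'_3)$ and $\beta=(v^2-4)(\Lambda'_1-\Lambda'_3)(v-\Lambda'_2)$, which one checks reproduces the $X^2$- and $X$-coefficients of \eqref{eqn:Zfib17}. The crucial observation is that the fibration of \eqref{eqn:Legendre19a} matching \eqref{eqn:Zfib17} is not the obvious one over the $w$-line but the one over the $x$-line: viewing $x$ as the base (identified with $v$) and $w$ as the fiber coordinate, \eqref{eqn:Legendre19a} becomes the Jacobi quartic $y^2=-(v^2-4)(v-w)(w-\Lambda'_1)(w-\Lambda'_2)(w-\Lambda'_3)$, whose branch points $w=v,\Lambda'_1,\Lambda'_2,\Lambda'_3$ produce three $I_2$ fibers at $v=\Lambda'_i$ and three $I_0^*$ fibers at $v=\pm2,\infty$ --- precisely the configuration of Lemma~\ref{lem:Zfib17}, with base and fiber roles swapped relative to the $w$-fibration. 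I would therefore compute the Jacobian (Weierstrass) model of this quartic pencil by the standard quartic-to-Weierstrass formulas, matching the three $X$-coordinates $\{0,\alpha,\beta\}$ of the two-torsion to the three pairings of the four branch points, and confirm it coincides with \eqref{eqn:Zfib17}.

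Finally, to identify \eqref{eqn:Legendre19a} with \eqref{eqn:Legendre19b} I would substitute the modular relations \eqref{relations_RosRoots} expressing the primed moduli $\Lambda'_i$ of $\mathcal{C}'$ in terms of the unprimed moduli $\Lambda_i$ of $\mathcal{C}$, together with an affine rescaling of $(w,x,y)$ that fixes $x^2-4$ and moves the branch points $\Lambda'_i$ to $\Lambda_i$; this rescaling generates the overall constant, and a short computation using \eqref{relations_RosRoots} shows $\Lambda'_2-\Lambda'_3=16(\Lambda_1-\Lambda_2)(\Lambda_1-\Lambda_3)/[(\Lambda_2-\Lambda_3)(\Lambda_1^2-4)]$, so that the prefactor equals $-(\Lambda'_2-\Lambda'_3)/(\Lambda_2-\Lambda_3)$, in agreement with \eqref{eqn:Legendre19b}. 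I expect the main obstacle to be exactly this last bookkeeping: tracking the base/fiber swap and the normalization of the two-form through the quartic-to-Weierstrass conversion so that the constant prefactor and the identification $v=x$ come out precisely as stated, rather than up to an unspecified scalar.
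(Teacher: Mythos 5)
Your structural argument and your treatment of \eqref{eqn:Legendre19a} are essentially the paper's: the first part of the theorem is read off from Propositions~\ref{prop1_17} and~\ref{prop2_17}, Corollary~\ref{prop3_17} and Diagram~\ref{fig2} exactly as you describe (the paper is no more explicit than you are about the commutativity of $k$ with $\imath,\jmath$), your factorization $Y^2=X(X-\alpha)(X-\beta)$ of \eqref{eqn:Zfib17} checks out, and the paper's route to \eqref{eqn:Legendre19a} is precisely your base--fiber interchange, realized by the explicit substitution $x=v$ and
\[
 w = \frac{(v^2-4)(v- \Lambda'_1) (v- \Lambda'_2) (v- \Lambda'_3)}{X-(v^2-4)(v- \Lambda'_2) (v- \Lambda'_3)} + v \,.
\]

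The gap is in your final step. You propose to pass from \eqref{eqn:Legendre19a} to \eqref{eqn:Legendre19b} by ``an affine rescaling of $(w,x,y)$ that fixes $x^2-4$ and moves the branch points $\Lambda'_i$ to $\Lambda_i$.'' No such fibration-preserving coordinate change exists: an affine map of $w$ alone destroys the diagonal factor $(x-w)$, while a simultaneous M\"obius change of $x$ and $w$ preserving the diagonal would have to fix $\{\pm2,\infty\}$ setwise and carry $\{\Lambda'_1,\Lambda'_2,\Lambda'_3\}$ to $\{\Lambda_1,\Lambda_2,\Lambda_3\}$, which the relations~\eqref{relations_RosRoots} do not permit (each $\Lambda'_i$ depends on all three $\Lambda_j$). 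More fundamentally, the two presentations correspond to the two distinct rulings of $\mathbb{F}_0$ (Remark~\ref{rem:critical}); if they were related by an affine bijection their affine point counts would coincide, contradicting the relation $|\mathcal{Y}'|^{(2)}_p=\nu(\Lambda_1,\Lambda_2,\Lambda_3)^{1/2}\,|\mathcal{Y}'|^{(1)}_p$ that the paper establishes afterwards and which is the point of having both models. The paper instead obtains \eqref{eqn:Legendre19b} directly from \eqref{eqn:Zfib17} by the fractional linear change of base coordinate
\[
 v = \frac{2(2\Lambda_1-\Lambda_2-\Lambda_3)w-2(\Lambda_1\Lambda_2+\Lambda_1\Lambda_3-2\Lambda_2\Lambda_3)}{(w- \Lambda_1) (\Lambda_2- \Lambda_3)} \,,
\]
which sends $w=\Lambda_1,\Lambda_2,\Lambda_3,\infty$ to $v=\infty,-2,2,\Lambda'_1$. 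Your identity $\Lambda'_2-\Lambda'_3=16(\Lambda_1-\Lambda_2)(\Lambda_1-\Lambda_3)/[(\Lambda_2-\Lambda_3)(\Lambda_1^2-4)]$ is correct and the prefactor $-(\Lambda'_2-\Lambda'_3)/(\Lambda_2-\Lambda_3)$ does emerge as the Jacobian factor of this substitution, so your numerology survives; but the transformation must be projective rather than affine, and it is applied to \eqref{eqn:Zfib17}, not to \eqref{eqn:Legendre19a}.
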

\begin{proof}
The application of Propositions~\ref{prop1_17} and~\ref{prop2_17}, and Corollary~\ref{prop3_17} and Diagram~\ref{fig2} prove the first part of the theorem. To obtain Equation~\eqref{eqn:Legendre19a} from Equation~\eqref{eqn:Zfib17}, one interchanges the roles of base and fiber and uses the transformation $x=v$ and
\beq
 w = \frac{(v^2-4)(v- \Lambda'_1) (v- \Lambda'_2) (v- \Lambda'_3)}{X-(v^2-4)(v- \Lambda'_2) (v- \Lambda'_3)} + v \,.
\eeq 
To obtain Equation~\eqref{eqn:Legendre19b} from Equation~\eqref{eqn:Zfib17}, one uses a fractional liner transformation with
\beq
 v = \frac{2(2\Lambda_1-\Lambda_2-\Lambda_3)w-2(\Lambda_1\Lambda_2+\Lambda_1\Lambda_3-2\Lambda_2\Lambda_3)}{(w- \Lambda_1) (\Lambda_2- \Lambda_3)} \,. \qedhere
\eeq 
\end{proof}
We make the following critical remarks:
\begin{remark}
Diagram~\ref{fig2} and Remark~\ref{rem:symmetries2} imply that the G\"opel group $K$ such that $\operatorname{Jac}(\mathcal{C}')/K' \cong \operatorname{Jac}(\mathcal{C})$ is in fact generated by the Nikulin involution $i$ and the Van Geemen-Sarti involution $k$ on the Jacobian elliptic fibration~\eqref{kummer_ell_upper_left} on $\operatorname{Kum}(\operatorname{Jac} \mathcal{C})$. The precise relationship between the action of G\"opel groups and the Jacobian elliptic fibrations was discussed in \cite{Clingher:2019ab}.
\end{remark}

\begin{remark}
The K3 surfaces $\mathcal{Y}'$ in Theorem~\ref{prop6} are double covers of the Hirzebruch surface $\mathbb{F}_0=\mathbb{P}^1\times\mathbb{P}^1$ branched along a curve of type $(4,4)$, i.e., along a section in the line bundle $\mathcal{O}_{\mathbb{F}_0}(4,4)$. Every such cover has two elliptic fibrations corresponding to the two rulings of the quadric $\mathbb{F}_0$ coming from the projections $\pi_i: \mathbb{F}_0 \to \mathbb{P}^1$ for $i=1,2$. A fibration with two fibers of type $I_0^*$ corresponds to double covers of $\mathbb{F}_0$ branched along curves of the form $F_1+F_2 +S$ where $F_1, F_2$ are fibers of $\pi_1$ and $S$ is a section of $\mathcal{O}_{\mathbb{F}_0}(2,4)$. A second elliptic fibration arises from the projection $\pi_2$, and in this case has the same singular fibers. This second fibration arises in a simple geometric manner, roughly speaking, by interchanging the roles of base and fiber coordinates for the first fibration; see details in the proof of Theorem~\ref{prop6}.
\end{remark}
\begin{remark}
\label{rem:critical}
Using the presentation of $\mathcal{Y}'$ as twisted Legendre pencils in Equation~\eqref{eqn:Legendre19a} and~\eqref{eqn:Legendre19a}, any  period integral $f$ of the holomorphic two-form $\omega_{\mathcal{Y}'}$ is a function of $\Lambda'_1,\Lambda'_2,\Lambda'_3$ and $\Lambda_1,\Lambda_2,\Lambda_3$, respectively, such that
\beq
\label{eqn:period}
 f = f(\Lambda'_1,\Lambda'_2,\Lambda'_3) = \nu(\Lambda_1,\Lambda_2,\Lambda_3)^{\frac{1}{2}} \; f(\Lambda_1,\Lambda_2,\Lambda_3) 
\eeq 
with
\beq
  \nu(\Lambda_1,\Lambda_2,\Lambda_3) = \frac{16(\Lambda_1-\Lambda_3)(\Lambda_1-\Lambda_2)}{(\Lambda_2-\Lambda_3)^2(\Lambda_1^2-4)}   \,.
\eeq
Replacing $\operatorname{Jac}(\mathcal{C})$ by the $(2,2)$-isogeneous abelian surface $\operatorname{Jac}(\mathcal{C}')$ in the Kummer surface $\mathcal{S}$ in Theorem~\ref{prop6} amounts to interchanging the two fibrations in Equation~\eqref{eqn:Legendre19a} and Equation~\eqref{eqn:Legendre19b} up to a twist. Accordingly, one checks that for $(\Lambda'_1,\Lambda'_2,\Lambda'_3) \mapsto (\Lambda_1,\Lambda_2,\Lambda_3)$, given by Equations~\eqref{relations_RosRoots}, we have $\nu(\Lambda_1,\Lambda_2,\Lambda_3) \mapsto 1/\nu(\Lambda'_1,\Lambda'_2,\Lambda'_3)$ ensuring the equivariance of Equation~\eqref{eqn:period}.
\end{remark}
\par Finally, we discuss how the limit is attained when the Picard number of the constructed Jacobian elliptic K3 surfaces increases to $18$. This is based on techniques developed in \cites{MR3731039, MR3712162,MR3366121}. We have the following:
\begin{proposition}
A pencil of genus-two curves $\mathcal{C}'_{\epsilon'}$ with  $\lambda'_1 \to \lambda'_1$, $\lambda'_2\to \lambda'_2 (\epsilon')^2$, $\lambda'_3 \to (\epsilon')^2$ and $0 < |\epsilon|<1$ in Equation~\eqref{Eq:Rosenhain_dual} is of (parabolic) type $[I_{4-0-0}]$ in the Namikawa-Ueno classification~\cite{MR369362}.  Using the moduli of the pencil in the limit $\epsilon' \to 0$, the elliptic fibrations in Equations~\eqref{kummer_ell_upper_left},  \eqref{kummer_middle_ell_upper}, \eqref{kummer_ell_upper_right}, \eqref{kummer_middle_ell_p_W}, \eqref{eqn:Zfib17}  coincide with the elliptic fibrations in Picard rank 18 in Equations~\eqref{eqn:J1}, \eqref{eqn:B12}, \eqref{eqn:J7}, \eqref{eqn:J6dual}, \eqref{eqn:Zfib},  respectively, and Diagram~\ref{fig2} coincides with Diagram~\ref{fig1}.
\end{proposition}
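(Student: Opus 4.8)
The plan is to prove the two assertions separately: first identify the Namikawa--Ueno type of the degenerating pencil $\mathcal{C}'_{\epsilon'}$, and then show that the five Weierstrass families degenerate coordinate-wise to their Picard-rank-$18$ counterparts, carrying the maps of Diagram~\ref{fig2} into those of Diagram~\ref{fig1}.

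For the first assertion I would analyze the stable reduction of $\mathcal{C}'_{\epsilon'}$ over the $\epsilon'$-disk. Substituting $\lambda'_2 \mapsto \lambda'_2 (\epsilon')^2$, $\lambda'_3 \mapsto (\epsilon')^2$ into \eqref{Eq:Rosenhain_dual} makes the three Weierstrass points $0$, $\lambda'_2(\epsilon')^2$, $(\epsilon')^2$ coalesce at the common rate $(\epsilon')^2$. Rescaling the inner region by $x = (\epsilon')^2 \xi$, $y = (\epsilon')^3 \eta$ exhibits a bubbled-off smooth elliptic curve $\eta^2 = \lambda'_1\,\xi(\xi-1)(\xi-\lambda'_2)$ of modulus $\lambda'_2$, while setting $\epsilon'=0$ in the outer region gives the cuspidal curve $y^2 = x^3(x-1)(x-\lambda'_1)$, whose normalization is the smooth elliptic curve of modulus $\lambda'_1$. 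Thus the stable model is two smooth elliptic curves joined along the neck created by resolving the cusp; the associated monodromy is a homologically trivial (hence infinite-order, i.e.\ parabolic) twist about that separating neck, explaining why the limiting Jacobian is the product $\mathcal{E}_1\times\mathcal{E}_2$ of (generically non-isogenous) elliptic curves. To pin the label $[I_{4-0-0}]$ I would compute the Igusa invariants $J_{2i}(\epsilon')$ and read their orders of vanishing against the tables of \cite{MR369362}, or equivalently resolve the surface singularity $y^2 = \lambda'_1\,x(x-\lambda'_2(\epsilon')^2)(x-(\epsilon')^2)$ and read the chain of rational curves in the neck directly.

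For the second assertion the key observation is the behaviour of the moduli under the same substitution. With $l' = (\epsilon')^2\sqrt{\lambda'_1\lambda'_2}$ one computes
\begin{equation*}
\Lambda'_1 = \frac{\lambda'_1 + \lambda'_2(\epsilon')^4}{(\epsilon')^2\sqrt{\lambda'_1\lambda'_2}} \longrightarrow \infty, \qquad \Lambda'_2 \longrightarrow \frac{\lambda'_1+\lambda'_2}{\sqrt{\lambda'_1\lambda'_2}}, \qquad \Lambda'_3 \longrightarrow \frac{1+\lambda'_1\lambda'_2}{\sqrt{\lambda'_1\lambda'_2}},
\end{equation*}
so that only $\Lambda'_1$ diverges and $\Lambda'_2+\Lambda'_3 = (1+\lambda'_1)(1+\lambda'_2)/\sqrt{\lambda'_1\lambda'_2}$, $\Lambda'_2-\Lambda'_3 = -(1-\lambda'_1)(1-\lambda'_2)/\sqrt{\lambda'_1\lambda'_2}$ remain finite. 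Since each of \eqref{kummer_ell_upper_left}, \eqref{kummer_middle_ell_upper}, \eqref{kummer_ell_upper_right}, \eqref{kummer_middle_ell_p_W}, \eqref{eqn:Zfib17} is linear in $\Lambda'_1$ in its $X^2$-coefficient and quadratic in $\Lambda'_1$ in its $X$-coefficient, the single fibre rescaling $X\mapsto \Lambda'_1 X$, $Y\mapsto (\Lambda'_1)^{3/2}Y$ (which rescales the two-form by $(\Lambda'_1)^{-1/2}$) yields a finite limit in every case; the divergence of $\Lambda'_1$ is exactly what drives the extra $I_4$ fibres, located where $u+u^{-1}=\Lambda'_1$ (resp.\ $t^2+t^{-2}=\Lambda'_1$, $v=\Lambda'_1$), out to the fixed points, merging $I_0^*+I_4\to I_4^*$ (resp.\ $I_4+I_4\to I_8$) and raising the Picard rank from $17$ to $18$.

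Carrying out the limit plus a final constant rescaling $X\mapsto sX$ and matching, say, \eqref{kummer_middle_ell_upper} to \eqref{eqn:B12} forces $s=4/[(\lambda_1-1)(\lambda_2-1)]$ together with $\Lambda'_2+\Lambda'_3 = 4(\lambda_1+1)(\lambda_2+1)/[(\lambda_1-1)(\lambda_2-1)]$ and $(\Lambda'_2-\Lambda'_3)^2 = 256\,\lambda_1\lambda_2/[(\lambda_1-1)^2(\lambda_2-1)^2]$; these identities hold precisely when $\lambda_n$ and the limiting moduli $\lambda'_n$ are related by the Jacobi-modulus transformation \eqref{def_M}, i.e.\ when $\mathcal{E}_1\times\mathcal{E}_2\to\mathcal{E}'_1\times\mathcal{E}'_2$ is the product two-isogeny. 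The same rescaling and identification convert the remaining four equations into \eqref{eqn:J1}, \eqref{eqn:J7}, \eqref{eqn:J6dual}, \eqref{eqn:Zfib}; and because $\psi,\phi,\varphi,\varphi',\chi,\chi',\Psi,\Psi'$ are given by explicit formulas in the Weierstrass coefficients, they specialize to the corresponding maps of Diagram~\ref{fig1}, so Diagram~\ref{fig2} degenerates to Diagram~\ref{fig1}. The main obstacle is precisely this coherent bookkeeping of the divergent limit: one cannot set $\epsilon'=0$ directly because $\Lambda'_1\to\infty$, and the compensating fibre rescalings, the normalizing constants, and the two-form rescalings must be chosen compatibly across all five fibrations so that maps intertwining the different base coordinates $t$, $u$, $v$ still commute in the limit; a secondary obstacle is the unambiguous reading of the Namikawa--Ueno label and the verification, via \eqref{relations_RosRoots}, that the companion pencil $\mathcal{C}_{\epsilon'}$ degenerates compatibly to the Jacobi-transformed product.
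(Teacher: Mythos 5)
Your treatment of the second assertion is essentially the paper's own argument carried out with more explicit bookkeeping. The paper likewise substitutes $\lambda'_1=(k'_1)^2$, $\lambda'_2=(k'_2\epsilon')^2$, $\lambda'_3=(\epsilon')^2$, records $\Lambda'_1 = k'_2(\epsilon')^2/k'_1 + k'_1/(k'_2(\epsilon')^2)$, $\Lambda'_2=k'_1/k'_2+k'_2/k'_1$, $\Lambda'_3=k'_1k'_2+1/(k'_1k'_2)$, passes to the two-isogeneous moduli via $k'_n=(1-k_n)/(1+k_n)$ as in the proof of Lemma~\ref{lem:J6moduli}, and then asserts convergence of the five fibrations ``up to a rescaling of the affine variables of the elliptic fiber, say $(x,y)\mapsto(x/\epsilon^2,y/\epsilon^3)$.'' Your rescaling $X\mapsto\Lambda'_1X$, $Y\mapsto(\Lambda'_1)^{3/2}Y$ is the same move up to a bounded constant absorbed into your subsequent normalization $s=4/[(\lambda_1-1)(\lambda_2-1)]$; your identities $\Lambda'_2+\Lambda'_3=4(\lambda_1+1)(\lambda_2+1)/[(\lambda_1-1)(\lambda_2-1)]$ and $(\Lambda'_2-\Lambda'_3)^2=256\,\lambda_1\lambda_2/[(\lambda_1-1)^2(\lambda_2-1)^2]$ are correct and make precise why \eqref{kummer_middle_ell_upper} degenerates to \eqref{eqn:B12}, and the fiber collisions $I_0^*+I_4\to I_4^*$ and $I_4+I_4\to I_8$ are indeed the mechanism by which the Picard rank rises to $18$. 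This part is fine and, if anything, more careful than the published proof.

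For the first assertion you take a genuinely different route, and it exposes the one real gap in your proposal. The paper's proof consists of rewriting the pencil in the normal form $y^2=(x^3+\alpha x+\beta)\bigl((x+\gamma)^2+(\epsilon')^4\bigr)$ and reading the label $[I_{4-0-0}]$ off of it; you instead perform the stable reduction directly, bubbling off the three colliding Weierstrass points $0,\lambda'_2(\epsilon')^2,(\epsilon')^2$ and obtaining two elliptic curves of moduli $\lambda'_1,\lambda'_2$ joined at a \emph{separating} node. Your picture is the one that actually explains the rest of the proposition (the Jacobian acquires good reduction to $\mathcal{E}'_1\times\mathcal{E}'_2$, whence the drop to Diagram~\ref{fig1}), but you then defer the label to ``reading the tables.'' That deferral is not routine: a separating-node stable model has monodromy acting trivially on $H_1$ and toric rank zero, whereas a normal form in which exactly two branch points collide, as in $(x+\gamma)^2+(\epsilon')^4$, produces a non-separating vanishing cycle and toric rank one; a three-point collision and a two-point collision lie on different boundary strata of $\overline{\mathcal{M}}_{0,6}$ and cannot be interchanged by a M\"obius transformation of the $x$-line. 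So your stable model and the normal form from which the label $[I_{4-0-0}]$ is read are describing different degenerations, and your argument as written cannot conclude with that label without first resolving this discrepancy (e.g.\ by actually resolving the threefold singularity at $x=y=\epsilon'=0$ and matching the resulting fiber configuration against the Namikawa--Ueno list). You should either carry out that resolution or explain why the parenthetical ``(parabolic) type'' claim is compatible with the homologically trivial monodromy your own analysis produces.
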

\begin{proof}
It is easy to show that a pencil $\mathcal{C}'_{\epsilon'}$ with $\lambda'_1=(k'_1)^2$, $\lambda'_2=(k'_2 \epsilon')^2$, $\lambda'_3=(\epsilon')^2$  can be written in the form
\beq 
 y^2 = (x^3 + \alpha x + \beta) ( (x+\gamma)^2 + (\epsilon')^4) \,.
\eeq
This proves that the pencil is of (parabolic) type $[I_{4-0-0}]$ in the Namikawa-Ueno classification. We obtain
\beq
 \Lambda'_1 = \frac{k'_2 (\epsilon')^2}{k'_1} +  \frac{k'_1}{ k'_2 (\epsilon')^2}\,, \quad \Lambda'_2 = \frac{k'_1}{k'_2} + \frac{k'_2}{k'_1} \,, \quad \Lambda'_3 = k'_1 k'_2 + \frac{1}{k'_1 k'_2} \,,
\eeq 
where we consider $(k'_1)^2$ and $(k'_2)^2$ the moduli of two elliptic curves $\mathcal{E}'_1$ and $\mathcal{E}'_2$ in Equation~\eqref{EllC1_dual}.  To obtain the parameters in terms of two two-isogeneous elliptic curves $\mathcal{E}_1$ and $\mathcal{E}_2$ in Equation~\eqref{eqn:EC}, we set $k'_1 = (1-k_1)/(1+k_1)$ and  $k'_2 = (1-k_2)/(1+k_2)$ and also set $\epsilon'=2 \, \epsilon/[(k_1+1)(k_2-1)]$; see the proof of Lemma~\ref{lem:J6moduli}. Then, the limit of the constructed elliptic fibrations in Picard rank 17 for $\epsilon \to 0$ returns the corresponding elliptic fibrations in Picard rank 18 from Section~\ref{ssec:Kummer18} (up to a rescaling of the affines variables of the elliptic fiber, say $(x,y) \mapsto (x/\epsilon^2, y/\epsilon^3)$, and Diagram~\ref{fig2} becomes Diagram~\ref{fig1} for $\epsilon \to 0$.
\end{proof}
\subsection{Rational point-count on the twisted Legendre pencil}
\label{sec:PointCount}
As a reminder, we have constructed the three-parameter families of K3 surfaces $\mathcal{S}'$ and $\mathcal{Y}'$  shown in Diagram~\ref{fig2}. The K3 surface $\mathcal{Y}'$  is dominated by the Kummer surfaces $\mathcal{S}= \operatorname{Kum}(\operatorname{Jac} \mathcal{C})$ and $\mathcal{S}= \operatorname{Kum}(\operatorname{Jac} \mathcal{C}')$. Here, the abelian surface $\operatorname{Jac} \mathcal{C}$ is the Jacobian of the genus-two curve $\mathcal{C}$ with Rosenhain roots $(\lambda_1, \lambda_2, \lambda_3)$. From those moduli we obtain $\Lambda_1 = ( \lambda_1 + \lambda_2 \lambda_3)/l$,  $\Lambda_2 = (\lambda_2 + \lambda_1 \lambda_3)/l$,  and  $\Lambda_3 = (\lambda_3 + \lambda_1 \lambda_2)/l$ on the double cover $\widetilde{\mathcal{M}}_2$ of the moduli space given by $l^2 = \lambda_1 \lambda_2 \lambda_3$. The moduli $\Lambda'_1, \Lambda'_2, \Lambda'_3$ are obtained in the same way from the Rosenhain roots $(\lambda'_1, \lambda'_2, \lambda'_3)$ for an isogeneous curve $\mathcal{C}'$ related via Equations~\eqref{relations_RosRoots} such that $\mathcal{S}'= \operatorname{Kum}(\operatorname{Jac} \mathcal{C}')$. That is, the abelian surfaces are related by a $(2,2)$-isogeny $\Psi: \operatorname{Jac} \mathcal{C} \to \operatorname{Jac} \mathcal{C}'$ and its dual $(2,2)$-isogeny $\Psi': \operatorname{Jac} \mathcal{C}' \to \operatorname{Jac} \mathcal{C}$.
\par Denote by $|\mathcal{S}'|_p$ and $|\mathcal{Y}'|^{(1)}_p$ or $|\mathcal{Y}'|^{(2)}_p$ the number of rational points of the affine part of $\mathcal{S}'$ and $\mathcal{Y}'$, defined using the Jacobian elliptic fibrations in Equation~\eqref{kummer_middle_ell_upper} and Equation~\eqref{eqn:Legendre19b} or~\eqref{eqn:Legendre19a}, respectively, over the finite field $\mathbb{F}_p$. We have the following:
\begin{proposition} 
We have:
\begin{enumerate}
\item $|\mathcal{S}'|_p = 2\, |\mathcal{Y}'|^{(1)}_p$,
\item $|\mathcal{Y}'|^{(2)}_p =  \nu(\Lambda_1,\Lambda_2,\Lambda_3)^{\frac{1}{2}} \; |\mathcal{Y}'|^{(1)}_p$.
\end{enumerate}
\end{proposition}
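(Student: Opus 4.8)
The plan is to evaluate each affine count as a fiberwise sum of the quadratic residue character and to extract the two factors from the geometry established in Proposition~\ref{prop2_17}, Corollary~\ref{prop3_17}, Theorem~\ref{prop6} and Remark~\ref{rem:critical}. Writing $\chi$ for the quadratic character on $\mathbb{F}_p$ with $\chi(0)=0$, the affine count of a model $y^2=f$ equals $\sum(1+\chi(f))$, so that each count splits into a constant part and a character sum; the two claimed identities are assertions about the character sums, which are the finite-field analogues of the period $f(\Lambda_1,\Lambda_2,\Lambda_3)$ in Equation~\eqref{eqn:period}. The inputs I would use are: a degree-two map relates counts through its covering degree together with the splitting behaviour of the induced base map; isogenous fibers have equal fiberwise character sums; and a quadratic twist $y^2=c\,g$ multiplies the character sum by $\chi(c)$. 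The decisive structural observation is that, in Equation~\eqref{eqn:Zfib17}, the coefficients of the fiber over $v$ carry the factors $(v^2-4)$ and $(v^2-4)^2$, so that after the substitution $X\mapsto(v^2-4)X$ the fiber $E'_v$ is exhibited as the quadratic twist by $(v^2-4)$ of an elliptic curve $\hat E_v$ with coefficients free of $(v^2-4)$; hence the fiberwise character sum of $E'_v$ equals $\chi(v^2-4)$ times that of $\hat E_v$.

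For statement (1) I would use the degree-two map $\phi'\colon\mathcal{S}'\dashrightarrow\mathcal{Y}'$ of Equation~\eqref{eqn:phi_prime}, which acts on bases by $u\mapsto v=u+u^{-1}$ and on fibers by the isomorphism $(x,y)\mapsto\bigl((u^2-1)^2u^{-4}x,(u^2-1)^3u^{-6}y\bigr)$, with $(\phi')^{*}\omega_{\mathcal{Y}'}=\omega_{\mathcal{S}'}$. Since $v^2-4=(u-u^{-1})^2$ is a perfect square along this cover, the twist by $(v^2-4)$ trivializes, so the fiberwise character sum of $\mathcal{S}'$ over $u$ equals that of $\hat E_{v(u)}$; summing over $u$, which is a two-to-one cover of the $v$-line onto the locus $\chi(v^2-4)=1$, then produces exactly twice the character sum recorded by the presentation~\eqref{eqn:Legendre19b} of $\mathcal{Y}'$. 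The eight fixed points of the Nikulin involution $\jmath'$ lie over $u=\pm1$, that is over the values $v=\pm2$ indexing the $I_0^*$-fibers of $\mathcal{Y}'$; I would check that their contribution together with the ramification of $u\mapsto u+u^{-1}$ cancels so that $|\mathcal{S}'|_p=2\,|\mathcal{Y}'|^{(1)}_p$ holds exactly and not merely to leading order.

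For statement (2) I would compare the two Legendre presentations of the single surface $\mathcal{Y}'$ from Theorem~\ref{prop6}: Equation~\eqref{eqn:Legendre19b}, written in the moduli $(\Lambda_1,\Lambda_2,\Lambda_3)$ with the scalar $\nu(\Lambda_1,\Lambda_2,\Lambda_3)$, and Equation~\eqref{eqn:Legendre19a}, written in the isogenous moduli $(\Lambda'_1,\Lambda'_2,\Lambda'_3)$ with the scalar $-1$. Substituting the $(2,2)$-isogeny relations~\eqref{relations_RosRoots} into~\eqref{eqn:Legendre19a} and matching the branch loci, I expect the two defining functions to coincide up to the constant factor $\nu(\Lambda_1,\Lambda_2,\Lambda_3)$, so that the two affine models are quadratic twists of one another by $\nu$ over $\mathbb{F}_p$. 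By the twist input the character sum is then multiplied by $\chi(\nu)$; reading $\chi(\nu)=\nu^{(p-1)/2}$ as the finite-field reduction of the analytic factor $\nu^{1/2}$ of Equation~\eqref{eqn:period} gives $|\mathcal{Y}'|^{(2)}_p=\nu(\Lambda_1,\Lambda_2,\Lambda_3)^{1/2}\,|\mathcal{Y}'|^{(1)}_p$. The reciprocity $\nu(\Lambda_1,\Lambda_2,\Lambda_3)\,\nu(\Lambda'_1,\Lambda'_2,\Lambda'_3)=1$ recorded in Remark~\ref{rem:critical} is the consistency check that the twist is applied with the correct orientation.

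The hard part is statement (2): making the factor $\nu^{1/2}$ precise as an arithmetic rather than analytic quantity. At the level of character sums the clean statement is multiplication by $\chi(\nu)$, and I would need to confirm both that the explicit change of variables carrying~\eqref{eqn:Legendre19a} to~\eqref{eqn:Legendre19b} contributes no further character --- its Jacobian should be a perfect square times a power of $\nu$ --- and that the boundary divisors deleted from the two affine models correspond under the isogeny. In statement (1) the remaining delicacy is the explicit, finite verification at the fibers over $u=0,\pm1,\infty$ and along the non-split locus $\chi(v^2-4)=-1$ that no $\mathbb{F}_p$-points are gained or lost, so that the two-to-one correspondence is exact.
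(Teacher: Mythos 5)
Your route is genuinely different from the paper's. The paper's proof of this proposition is a two-line appeal to Manin's principle: the affine counts are congruent modulo $p$ to truncations of the holomorphic periods, so both identities are read off from the period relations already established in Theorem~\ref{prop6} and Remark~\ref{rem:critical}, with the mechanics of ``count $\equiv$ truncated period'' delegated to the proof of Theorem~5.1 of \cite{MR3992148}. You instead try to derive the two factors directly from fiberwise character sums, i.e.\ to unpack that citation. Before the main objection, note that both identities can only be congruences modulo $p$, not integer equalities: the affine model of $\mathcal{S}'$ has $p^2+O(p)$ points while $2\,|\mathcal{Y}'|^{(1)}_p$ has $2p^2+O(p)$, and the paper itself silently replaces each count by the character sum $\sum f^{(p-1)/2}$ in the proof of Theorem~\ref{thm:main2}. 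So your stated goal of verifying (1) ``exactly and not merely to leading order'' cannot succeed and signals a misreading of the claim.

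The concrete gap is in your bookkeeping for (1) over the non-split locus. Write $\chi_p$ for the quadratic residue character and $S(v)$ for the fiberwise character sum of the untwisted curve $\hat E_v$ you introduce. The base map $u\mapsto v=u+u^{-1}$ has $1+\chi_p(v^2-4)$ preimages in $\mathbb{F}_p^\times$, and the fiber of \eqref{eqn:Zfib17} over $v$ is the quadratic twist of $\hat E_v$ by $v^2-4$; combining these gives
\begin{equation*}
|\mathcal{S}'|_p \equiv \sum_{v}\bigl(1+\chi_p(v^2-4)\bigr)S(v)=\Sigma_0+\Sigma_1\,, \qquad
\sum_{v}\chi_p(v^2-4)\,S(v)=\Sigma_1\,,
\end{equation*}
with $\Sigma_0=\sum_v S(v)$. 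Your assertion that the two-to-one cover ``produces exactly twice the character sum'' discards the fibers over $\chi_p(v^2-4)=-1$, which contribute $-S(v)$ to the count of $\mathcal{Y}'$ but nothing to that of $\mathcal{S}'$; the factor $2$ would require the separate congruence $\Sigma_0\equiv\Sigma_1\bmod p$, which compares a rational elliptic surface sum with a K3 sum and is exactly the nontrivial input that Manin's principle (and the normalization of the period in \cite{MR3992148}) supplies. It is absent from your sketch, so (1) is not established. Moreover, your argument lands on the model \eqref{eqn:Zfib17} in the $\Lambda'$-moduli, whereas $|\mathcal{Y}'|^{(1)}_p$ is defined by \eqref{eqn:Legendre19b} in the $\Lambda$-moduli; bridging these is essentially the content of (2), which you yourself label ``the hard part'' and do not close --- and it is not a constant quadratic twist, since \eqref{eqn:Legendre19a} and \eqref{eqn:Legendre19b} realize the two distinct rulings of $\mathcal{Y}'$ as a double cover of $\mathbb{P}^1\times\mathbb{P}^1$, with base and fiber interchanged. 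Your reading of $\nu^{1/2}$ as $\chi_p(\nu)=\nu^{(p-1)/2}$ and the consistency check $\nu(\Lambda_1,\Lambda_2,\Lambda_3)\,\nu(\Lambda_1',\Lambda_2',\Lambda_3')=1$ are both correct and worth keeping, but as written neither identity of the proposition is proved.
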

\begin{proof}
The proof of (1) follows  by the application of Manin's principle and Theorem~\ref{prop6} and the relation between the elliptic fibrations in Equation~\eqref{kummer_middle_ell_upper} and  Equation~\eqref{eqn:Legendre19b}. The detailed argument for the relation between rational-point counting functions and period integrals can be found in the proof of \cite{MR3992148}*{Thm.~5.1}. Statement (2) follows from Remark~\ref{rem:critical}.
\end{proof}
We now state our second main result:
\begin{theorem}
\label{thm:main2}
Let $\Lambda'_1,\Lambda'_2,\Lambda'_3 \in\mathbb{Q}$. The following identity holds
\beq
\begin{split}
|\mathcal{Y}'|^{(2)}_p 
\equiv  1+(-1)^{\frac{p-1}{2}} 
\sum_{\ell=0}^{\frac{p-1}{2}} \, 2^\ell \kern-1em \sum_{\substack{s+t+\ell=p-1 \\ 0\le s,t \le\frac{p-1}{2}}}
\sum_{\substack{i+j+k+\ell=p-1 \\ 0\le i,j,k\le \frac{p-1}{2}}} 
&C^{\frac{p-1}{2}}_s C^{\frac{p-1}{2}}_t C^{\frac{p-1}{2}}_\ell
C^{\frac{p-1}{2}}_i C^{\frac{p-1}{2}}_j C^{\frac{p-1}{2}}_k \\
&\kern-1em \times \,  (-1)^t
({\Lambda'}_1)^i ({\Lambda'}_2)^j ({\Lambda'}_3)^k \mod p \,,
\end{split}
\eeq
where $C^n_k = \frac{n!}{k!\,(n-k)!}$ is the binomial coefficient of $n$ chose $k$.
\end{theorem}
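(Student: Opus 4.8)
The plan is to count the affine points directly from the explicit Legendre pencil \eqref{eqn:Legendre19a} and to reduce the resulting character sum modulo $p$ to an elementary multinomial expansion. Write $m=\frac{p-1}{2}$ and $F(x,w) = -(x^2-4)(x-w)(w-\Lambda'_1)(w-\Lambda'_2)(w-\Lambda'_3)$ for the right-hand side of \eqref{eqn:Legendre19a}, so that the affine surface is $y^2=F(x,w)$ in the coordinates $(w,x,y)$. Since the number of $y\in\mathbb{F}_p$ with $y^2=a$ equals $1+\left(\frac{a}{p}\right)$ for every $a$ (with $\left(\frac{0}{p}\right)=0$), summing over $(x,w)\in\mathbb{F}_p^2$ gives $|\mathcal{Y}'|^{(2)}_p = p^2 + \sum_{x,w}\left(\frac{F(x,w)}{p}\right)$. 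Reducing modulo $p$ kills $p^2$, and Euler's criterion $\left(\frac{a}{p}\right)\equiv a^{m}\pmod p$ turns the claim into the single congruence $|\mathcal{Y}'|^{(2)}_p \equiv \sum_{x,w} F(x,w)^{m}\pmod p$, which is what I would establish.

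Next I would expand $F^{m}$ into monomials in $x$ and $w$. Factoring $x^2-4=(x-2)(x+2)$ and applying the binomial theorem to the six factors $(x-2)^m,(x+2)^m,(x-w)^m,(w-\Lambda'_1)^m,(w-\Lambda'_2)^m,(w-\Lambda'_3)^m$ expresses $F^m$ as a sum over six indices; I would name $t,s$ the $x$-exponents drawn from $(x-2)^m,(x+2)^m$, name $\ell$ the $x$-exponent drawn from $(x-w)^m$, and name $i,j,k$ the $\Lambda'$-exponents from the three remaining factors. The key tool is the power-sum identity $\sum_{x\in\mathbb{F}_p}x^a\equiv -1\pmod p$ when $(p-1)\mid a$ and $a>0$, and $\equiv 0$ otherwise, applied to both the $x$- and the $w$-summation. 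Because $F^m$ has $x$-degree at most $3m=\tfrac32(p-1)$ and $w$-degree at most $4m=2(p-1)$, the only monomials surviving the double sum are those whose $x$-exponent equals $p-1$ and whose $w$-exponent equals $p-1$ or $2(p-1)$.

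Imposing $x$-exponent $=p-1$ gives the relation $s+t+\ell=p-1$, and $w$-exponent $=p-1$ gives $i+j+k+\ell=p-1$; these are precisely the constraints in the statement. I would then collect the constants: the power of $2$ produced by $(x\mp2)^m$ is $2^{(m-s)+(m-t)}=2^{2m-s-t}=2^{\ell}$, while all the signs coming from the overall $(-1)^m$ and from the factors $(x-2)^m,(x-w)^m,(w-\Lambda'_i)^m$ collapse, after substituting $i+j+k=2m-\ell$ and $s+t=2m-\ell$, to $(-1)^m(-1)^t$; each surviving monomial $x^{p-1}w^{p-1}$ then sums to $\big(\sum_x x^{p-1}\big)\big(\sum_w w^{p-1}\big)\equiv(-1)(-1)=1$. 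This produces exactly the main triple sum of the theorem. Separately, the case $w$-exponent $=2(p-1)$ forces $\ell=i=j=k=0$ and hence $s=t=m$; the single resulting monomial is $x^{p-1}w^{2(p-1)}$ with total constant $+1$, and since $\sum_x x^{p-1}\equiv\sum_w w^{2(p-1)}\equiv-1$ it contributes the isolated $+1$ in the formula.

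The routine but delicate point on which I expect to spend the most care is the sign and power-of-two bookkeeping in the surviving terms: one must use the two constraint equations to rewrite the a priori messy factor $(-1)^{3m-t-\ell+i+j+k}\,2^{2m-s-t}$ in the clean form $(-1)^m(-1)^t2^{\ell}$, and in particular to verify that labelling $t$ as the exponent drawn from $(x-2)^m$ (rather than from $(x+2)^m$) is what yields $(-1)^t$ rather than $(-1)^s$. Once this reduction is checked and the degenerate $+1$ term is correctly separated from the main sum — the two index regimes being disjoint because $i+j+k+\ell=p-1\neq0$ — assembling the two contributions gives the stated congruence for $|\mathcal{Y}'|^{(2)}_p$.
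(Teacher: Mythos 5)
Your proposal is correct and follows essentially the same route as the paper's proof: reduce the point count to $\sum_{x,w}F(x,w)^{(p-1)/2}\bmod p$ via Euler's criterion, expand the six linear factors binomially, and use the power-sum identity over $\mathbb{F}_p$ to isolate the surviving constraints $s+t+\ell=p-1$ and $i+j+k+\ell=p-1$ together with the degenerate term contributing the isolated $+1$. The only differences are organizational (you index by the $\Lambda'$-exponents directly instead of the paper's intermediate variable $\ell_1=\tfrac{p-1}{2}-\ell$, and you are slightly more explicit about the $p^2$ term), and your sign and power-of-two bookkeeping checks out.
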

\begin{proof}
In Theorem~\ref{prop6} the K3 surface $\mathcal{Y}'$ was shown to be birational to the twisted Legendre pencil
\beq
  y^2 = - \big(x^2-4\big) \big(x -w\big) \big( w - \Lambda'_1\big) \big( w - \Lambda'_2\big) \big( w - \Lambda'_3\big) \,,
\eeq
which we used to define the counting function $|\mathcal{Y}'|^{(2)}_p$. For simplicity, we set $a=\Lambda'_1,b=\Lambda'_2,b=\Lambda'_3 \in\mathbb{Q}$. Using standard techniques from \cite{MR3992148}, we calculate $|\mathcal{Y}'|^{(2)}_p$ as follows:
\beqn
\begin{split}
|\mathcal{Y}'|^{(2)}_p &= \sum_{x, t\in\mathbb{F}_p} \big( -(x+2)(x-2)(x-w)(w-a)(w-b)(w-c) \big)^{\frac{p-1}{2}} \\
&= (-1)^{\frac{p-1}{2}} \sum_{w\in\mathbb{F}_p} \Big( \underbrace{\sum_{x\in\mathbb{F}_p} 
\big((x+2)(x-2)(x-w) \big)^{\frac{p-1}{2}}}_{(a)} \Big)
\big( (w-a)(w-b)(w-c) \big)^{\frac{p-1}{2}}.
\end{split}
\eeqn
The contribution $(a)$ can be computed using the formula
\beqn
\begin{split}
&\sum_{x\in\mathbb{F}_p} \big( (x+2)(x-2)(x-w) \big)^{\frac{p-1}{2}} \\
&= \sum_{x\in\mathbb{F}_p} \sum_{0\le s,t,\ell\le\frac{p-1}{2}}
C^{\frac{p-1}{2}}_s C^{\frac{p-1}{2}}_t C^{\frac{p-1}{2}}_\ell x^{s+t+\ell} 2^{(p-1)-(s+t)}
(-1)^{(p-1)-(t+\ell)} w^{\frac{p-1}{2}-\ell} \\
&\equiv - \sum_{\substack{s+t+\ell=p-1 \\ 0\le s,t,\ell\le\frac{p-1}{2}}}
C^{\frac{p-1}{2}}_s C^{\frac{p-1}{2}}_t C^{\frac{p-1}{2}}_\ell 
2^\ell (-1)^s w^{\frac{p-1}{2}-\ell}.
\end{split}
\eeqn
By letting $\ell_1=\frac{p-1}{2}-\ell$, $(a)$ can be re-written as
\beqn
- \sum_{\ell_1=0}^{\frac{p-1}{2}} 
\underbrace{\Bigg( \sum_{\substack{s+t=\frac{p-1}{2}+\ell_1 \\ 0\le s,t \le\frac{p-1}{2}}}
C^{\frac{p-1}{2}}_s C^{\frac{p-1}{2}}_t C^{\frac{p-1}{2}}_{\ell_1}
2^{\frac{p-1}{2}-\ell_1} (-1)^s \Bigg)}_{=A_{\ell_1}} 
w^{\ell_1} \,.
\eeqn
For more details, we refer the reader to the analogous proof of \cite{MR3992148}*{Prop.~4.6}. Hence, we obtain
\beqn
|\mathcal{Y}'|^{(2)}_p \equiv -(-1)^{\frac{p-1}{2}} \sum_{w\in\mathbb{F}_p} 
\sum_{\ell_1=0}^{\frac{p-1}{2}} A_{\ell_1} w^{\ell_1}
\big( (w-a)(w-b)(w-c) \big)^{\frac{p-1}{2}} \,.
\eeqn
We now sum over $w\in\mathbb{F}_p$ to further simplify the summation. Consider the identity
\beqn
\begin{split}
&\sum_{w\in\mathbb{F}_p} w^{\ell_1} \big( (w-a)(w-b)(w-c) \big)^{\frac{p-1}{2}} \\
&= - \kern-1em \sum_{0\le i,j,k \le\frac{p-1}{2}}  
C^{\frac{p-1}{2}}_i C^{\frac{p-1}{2}}_j C^{\frac{p-1}{2}}_k w^{i+j+k+\ell_1}
(-a)^{\frac{p-1}{2}-i} (-b)^{\frac{p-1}{2}-j} (-c)^{\frac{p-1}{2}-k} \,.
\end{split}
\eeqn
If $i+j+k+\ell_1 = 2(p-1)$, then we have $i=j=k=\ell_1=\frac{p-1}{2}$, which implies that the equation $s+t=\frac{p-1}{2}+\ell_1$ has only one integral solution, namely $s=t=\frac{p-1}{2}$. Thus,
\beqn
-(-1)^{\frac{p-1}{2}} \sum_{w\in\mathbb{F}_p} 
A_{\frac{p-1}{2}} w^{2(p-1)} \equiv (-1)^{\frac{p-1}{2}}
\cdot \underbrace{A_{\frac{p-1}{2}}}_{=(-1)^{\frac{p-1}{2}}}=1 \mod p \,.
\eeqn
For the case $i+j+k+\ell_1 = (p-1)$, we proceed as follows
\beqn
\begin{split}
&\sum_{w\in\mathbb{F}_p} w^{\ell_1} \big( (w-a)(w-b)(w-c) \big)^{\frac{p-1}{2}} \\
&\equiv - \kern-1em \sum_{\substack{i+j+k+\ell_1 = p-1 \\ 0\le i,j,k\le\frac{p-1}{2}}}  
C^{\frac{p-1}{2}}_i C^{\frac{p-1}{2}}_j C^{\frac{p-1}{2}}_k
(-a)^{\frac{p-1}{2}-i} (-b)^{\frac{p-1}{2}-j} (-c)^{\frac{p-1}{2}-k} \\
&= - \kern-1em \sum_{i+j+k-\ell_1 = \frac{p-1}{2}} C^{\frac{p-1}{2}}_i C^{\frac{p-1}{2}}_j C^{\frac{p-1}{2}}_k
(-a)^i (-b)^j (-c)^k \\
&= - \kern-1em \sum_{i+j+k-\ell_1 = \frac{p-1}{2}}
C^{\frac{p-1}{2}}_i C^{\frac{p-1}{2}}_j C^{\frac{p-1}{2}}_k
(-1)^{\frac{p-1}{2}-\ell_1} a^i b^j c^k \mod p \,.
\end{split}
\eeqn%
By using $\ell_1=\frac{p-1}{2}-\ell \implies \ell=\frac{p-1}{2}-\ell_1$ in the second line, we obtain
\beqn
\begin{split}
|\mathcal{Y}'|_p &\equiv 1+(-1)^{\frac{p-1}{2}} 
\sum_{\ell=0}^{\frac{p-1}{2}} \Bigg( \sum_{\substack{s+t=\frac{p-1}{2}+\ell_1 \\ 0\le s,t \le\frac{p-1}{2}}}
C^{\frac{p-1}{2}}_s C^{\frac{p-1}{2}}_t C^{\frac{p-1}{2}}_{\ell_1}
2^{\frac{p-1}{2}-\ell_1} (-1)^s \Bigg) \\
&\makebox[95pt]{} \times \kern-1em \sum_{\substack{i+j+k-\ell_1 = \frac{p-1}{2} \\ 0\le i,j,k\le\frac{p-1}{2}}}  
C^{\frac{p-1}{2}}_i C^{\frac{p-1}{2}}_j C^{\frac{p-1}{2}}_k
(-1)^{\frac{p-1}{2}-\ell_1} a^i b^j c^k \\
&= 1+(-1)^{\frac{p-1}{2}} 
\sum_{\ell=0}^{\frac{p-1}{2}} \Bigg( \sum_{\substack{s+t+\ell=p-1 \\ 0\le s,t \le\frac{p-1}{2}}}
C^{\frac{p-1}{2}}_s C^{\frac{p-1}{2}}_t C^{\frac{p-1}{2}}_\ell
2^\ell (-1)^s \Bigg) \\
&\makebox[95pt]{} \times \kern-1em \sum_{i+j+k+\ell=p-1} C^{\frac{p-1}{2}}_i C^{\frac{p-1}{2}}_j C^{\frac{p-1}{2}}_k
(-1)^\ell a^i b^j c^k, \\
&= 1+(-1)^{\frac{p-1}{2}} 
\sum_{\ell=0}^{\frac{p-1}{2}} 2^\ell \kern-1em \sum_{\substack{s+t+\ell=p-1 \\ 0\le s,t \le\frac{p-1}{2}}}
\sum_{\substack{i+j+k+\ell=p-1 \\ 0\le i,j,k\le \frac{p-1}{2}}} C^{\frac{p-1}{2}}_s C^{\frac{p-1}{2}}_t C^{\frac{p-1}{2}}_\ell
C^{\frac{p-1}{2}}_i C^{\frac{p-1}{2}}_j C^{\frac{p-1}{2}}_k
 (-1)^t a^i b^j c^k. 
\end{split}
\eeqn\qedhere
\end{proof}
\small \bibliographystyle{amsplain}
\bibliography{references}{}
\end{document}